\newcommand{\R}{\mathbb{R}}
\newcommand{\Z}{\mathbb{Z}}
\newcommand{\N}{\mathbb{N}}
\newcommand{\C}{\mathbb{C}}
\renewcommand{\epsilon}{\varepsilon}
\numberwithin{equation}{section}
\newtheorem{theorem}{Theorem}[section]
\newtheorem{lemma}[theorem]{Lemma}
\newtheorem{example}[theorem]{Example}
\renewcommand{\leq}{\leqslant}
\renewcommand{\le}{\leqslant}
\renewcommand{\ge}{\geqslant}
\title[A two-dimensional Frenkel-Kontorova model]{A quantitative rigidity result\\
for a two-dimensional\\
Frenkel-Kontorova model}
\author{Serena Dipierro}
\address{Serena Dipierro: Department of Mathematics and Statistics, The University of Western Australia, 35 Stirling Highway, Crawley, Perth, WA 6009, Australia}
\email{serena.dipierro@uwa.edu.au}
\author{Giorgio Poggesi}
\address{Giorgio Poggesi: Department of Mathematics and Statistics, The University of Western Australia, 35 Stirling Highway, Crawley, Perth, WA 6009, Australia}
\email{giorgio.poggesi@uwa.edu.au}
\author{Enrico Valdinoci}
\address{Enrico Valdinoci: Department of Mathematics and Statistics, The University of Western Australia, 35 Stirling Highway, Crawley, Perth, WA 6009, Australia}
\email{enrico.valdinoci@uwa.edu.au}
\begin{document}

\begin{abstract}
We consider
a Frenkel-Kontorova system of harmonic oscillators in a two-dimensional Euclidean lattice
and we obtain a quantitative estimate on the angular function of the equilibria.
The proof relies on a PDE method related to a classical conjecture by E. De Giorgi,
also in view of an elegant technique based on complex variables that was introduced by A. Farina.

In the discrete setting, a careful analysis of the reminders is needed to exploit
this type of methodologies inspired by continuum models.\end{abstract}

\keywords{Lattice systems, crystals, equilibrium configurations, rigidity results, PDE methods.}
\subjclass[2010]{Primary 82B20, 35Q82, 46N55; Secondary 34A33, 35J61}

\maketitle

\section{Introduction and statement of the main result}

In~\cite{MR0001169}, Yakov Frenkel and Tatiana Kontorova introduced a simple, but very
effective, model
to describe the atom dislocation dynamics of a crystal lattice.
The model takes into account
a pattern of particles with harmonic nearest neighbor interactions
and subject to a substrate potential
(in its simplest form, the potential
is a periodic trigonometric function, but more general forcing terms
can be also taken into account).

The simplest expression of the model
by Frenkel and Kontorova
consists
of a harmonic chain of atoms of unit mass in a sinusoidal potential.
The atoms are supposed to be at some (small) distance~$h$
the ones from the others;
hence, for simplicity, in dimension~$1$,
we can consider the location
of the atoms at rest to be described by the lattice~$h\Z$.
The displacement~$u_i(t)$, for each~$i\in h\Z$ and~$t\in\R$,
describes the evolution of such a
harmonic oscillator subject to
nearest neighbor interactions (with Hooke constant~$d>0$)
and the sinusoidal potential according
to the equation
\begin{equation}\label{FKPAR}
{\displaystyle {\ddot u_i}+\sin u_i-\frac{d}{h^2}\,\big(
u_{i+1}+u_{i-1}-2u_{i}\big)=0}.
\end{equation}
Equilibrium configurations,
i.e., stationary solutions
of~\eqref{FKPAR},
are therefore obtained from the equation
\begin{equation}\label{FKPAR-2}
{\displaystyle \sin u_i-\frac{d}{h^2}\,\big(
u_{i+1}+u_{i-1}-2u_{i}\big)=0}.
\end{equation}
Natural generalizations of~\eqref{FKPAR-2}
occur by considering the rest positions of the atoms
in a plane (see Figure~\ref{FIG-FK})
and more general potentials than the sinusoid, possibly depending
also on the position,
in which case~\eqref{FKPAR-2} is replaced by the more general form
\begin{equation}\label{DGEQ0} \sum_{j=1}^2 \frac{ u_{i+he_j} + u_{i-he_j} -2u_i}{h^2}=f(i,u_i),\qquad{\mbox{for all }}i\in h\Z^2,\end{equation}
being~$e_1:=(1,0)$ and~$e_2:=(0,1)$.
The detailed and simple mechanical interpretation
of~\eqref{DGEQ0} is
given by a system of particles constrained to move
in the space along a vertical track, see Figure~\ref{FIG-FK}.
More precisely, one assumes that
the tracks are equally distributed in a square pattern,
namely the intersection of the tracks and the ground
level corresponds to the lattice~$h\Z^2$.
One also assumes that the closest particles
are connected by elastic springs, say with Hooke constant
equal to~$d/h^2$. Moreover, the particles
are subject to a potential which depends
on the height of the particle and on the position
of the vertical track along which the particle moves.
In this way, if~$u_i$ is the height
of the particle located on the vertical
trail placed at~$i\in h\Z^2$, we denote by~$V(i, u_i)$
the corresponding external potential.

\begin{figure}[h!]
\includegraphics[scale=0.45]{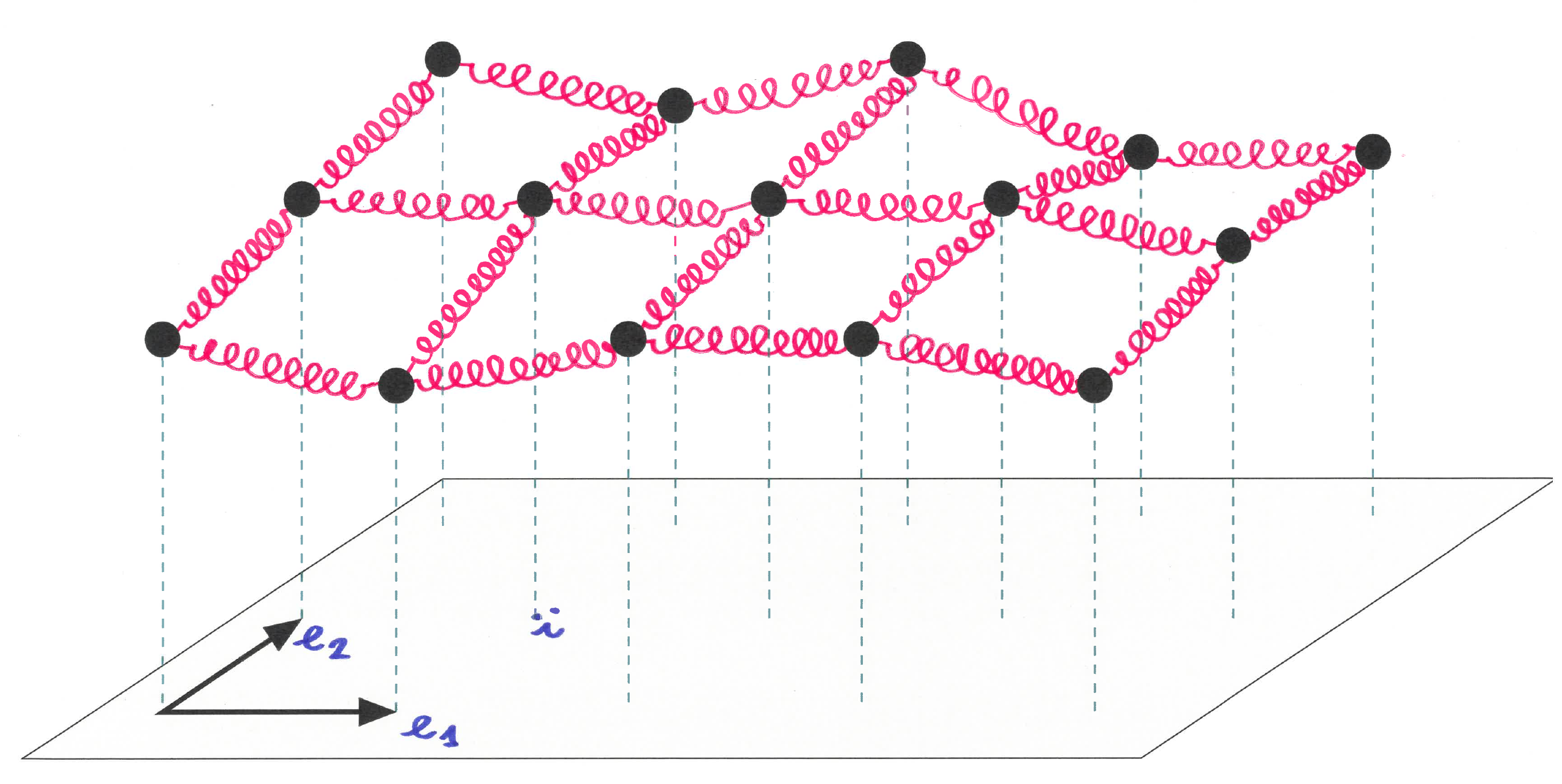}
\caption{Mechanical interpretation of~\eqref{DGEQ0}.}\label{FIG-FK}
\end{figure}

The total energy of this system is given by the formal series
\begin{equation}\label{LO:20ef} {\mathcal{E}}(u):=\frac{d}{2h^2}\sum_{{i\in h\Z^2}\atop{j\in\{1,2\}}} |v_{i+he_j}-v_i|^2
+\sum_{i\in h\Z^2} V(i, u_i),\end{equation}
where~$v_i:=(i,u_i)$ and~$u=\{u_i\}_{i\in h\Z^2}$.

Observing that
$$ |v_{i+he_j}-v_i|^2=
(u_{i+he_j}-u_i)^2+|(i+he_j)-i|^2=
(u_{i+he_j}-u_i)^2+h^2,$$
and the latter term is independent on the configuration,
the equilibria of~\eqref{LO:20ef}
coincide with those of
\begin{equation}\label{LO:20ef1} {\mathcal{F}}(u):=\frac{d}{2h^2}\sum_{{i\in h\Z^2}\atop{j\in\{1,2\}}} (u_{i+he_j}-u_i)^2
+\sum_{i\in h\Z^2} V(i, u_i).\end{equation}
The equilibria of~\eqref{LO:20ef1}
are found by considering the critical points for compact perturbations,
leading to the equation
$$ 0=\langle D{\mathcal{F}}(u), \,\varphi\rangle=\frac{d}{h^2}\sum_{{i\in h\Z^2}\atop{j\in\{1,2\}}} (u_{i+he_j}-u_i)(\varphi_{i+he_j}-\varphi_i)
+\sum_{i\in h\Z^2} \partial_{u_i} V(i, u_i)\varphi_i,$$
for every~$\varphi=\{\varphi_i\}_{i\in h\Z^2}$
such that~$\{\varphi_i\ne0\}$ is a finite set.

Writing
\begin{eqnarray*}
&&\sum_{{i\in h\Z^2}\atop{j\in\{1,2\}}} (u_{i+he_j}-u_i)(\varphi_{i+he_j}-\varphi_i)
\\&=&
\sum_{{i\in h\Z^2}\atop{j\in\{1,2\}}} (u_{i+he_j}-u_i)\varphi_{i+he_j}
-
\sum_{{i\in h\Z^2}\atop{j\in\{1,2\}}} (u_{i+he_j}-u_i)\varphi_i\\
&=&
\sum_{{i\in h\Z^2}\atop{j\in\{1,2\}}} (u_{i}-u_{i-he_j})\varphi_{i}
-
\sum_{{i\in h\Z^2}\atop{j\in\{1,2\}}} (u_{i+he_j}-u_i)\varphi_i\\
&=&
\sum_{{i\in h\Z^2}\atop{j\in\{1,2\}}} (2u_{i}-u_{i+he_j}-u_{i+he_j})\varphi_i
,\end{eqnarray*}
we see that the equilibrium configurations satisfy
\begin{equation}\label{LDTG}\frac{d}{h^2}\,\sum_{j=1}^2(2u_{i}-u_{i+he_j}-u_{i+he_j})
+
\partial_{u_i} V(i, u_i)=0,\end{equation}
which is precisely of the form given in~\eqref{DGEQ0}.\medskip

In this paper, we will provide some ``approximate
symmetry'' results for solutions of~\eqref{DGEQ0}
under suitable structural assumptions. Roughly speaking,
we will consider the angular function of the solution
(i.e., the phase of the discrete increment of the solution)
and control its weighted $\ell^2$-norm
by the small parameter~$h$ (and suitable structural constants).
The weight function of such $\ell^2$-norm
will also have a concrete meaning in the model,
being the square of the discrete increment of the solution
(the precise estimate will be formally stated in~\eqref{FORM}
below). In the formal limit~$h\searrow0$,
estimates of this kind would entail a one-dimensional
symmetry for the solution, yielding that the two-dimensional
equilibrium configuration can be in fact represented
by a one-dimensional function in some direction
and the level sets of the solution are all straight lines.
In this spirit, our result can be seen as a quantitative
estimate on ``how far the equilibrium
configuration is from being one-dimensional''
in the discrete setting
and gives an optimal estimate on the perturbative effect
played by the spatial parameter~$h$.
\medskip

We also point out that,
in terms of atom dislocation theory, the
Frenkel-Kontorova model can be considered as an ``atomistic''
description which can be rigorously related to the ``microscopic''
description of hybrid type given by the
Peierls-Nabarro model, see~\cite{MR2852206}.
See also~\cite{MR2035039} for a throughout presentation of
the Frenkel-Kontorova model and for the detailed discussions
of several applications to fields different than the theory of crystal
dislocation (including, among the others,
absorption, crowdions, magnetically ordered structures, Josephson junctions,
hydrogen-bonded and DNA chains).
In the dynamical systems setting, the equilibria
of the Frenkel-Kontorova model with sinusoidal layer potential
give rise to the
Chirikov-Taylor map, and the
continuum-limit is the sine-Gordon equation.\medskip

Given its constructive importance in the theory of crystal dislocation,
its flexibility in a number of different applications, and its strong link
to problems in dynamical systems and differential equations,
the Frenkel-Kontorova model has been widely studied in the literature
under different perspectives, and it has become a classical topic
in several branches of statistical mechanics and in the analysis of harmonic oscillators
on lattices, see e.g.~\cite{MR719055, MR719634, MR766107, MR1620543, MR2356117, MR3038682, MR3663614, MR3725364, MR3912645, MR4015338, MR4028786, MR4102235}.\medskip

The point of view that we take in this article aims at
describing the monotone solutions of a two-dimensional
Frenkel-Kontorova model. The results obtained will be valid
for every type of layer potential and their proof will rely on a number of analytical
methods inspired by a classical conjecture by Ennio De Giorgi, see~\cite{MR533166}
(see also~\cite{MR1775735, MR1637919, MR1655510, MR2480601, MR2757359, MR2728579, MR3488250}
for several positive results in the direction of such conjecture,
\cite{MR2473304}
for a counterexample in high dimension,
and~\cite{MR2528756}
for a survey on this topic).\medskip

Given~$h\in(0,1]$ and~$i\in h\Z^2$, for any~$u:h\Z^2\to\R$, we set
\begin{equation}\label{ELLE} {\mathcal{L}} u_i:=
\sum_{j=1}^2 \frac{ u_{i+he_j} + u_{i-he_j} -2u_i}{h^2}.\end{equation}
We observe that,
as~$h\searrow0$, the operator~$\mathcal{L}$ recovers the usual
Laplace operator.

Our main objective here is to consider solutions~$u:h\Z^2\to\R$
of the equation (as in~\eqref{DGEQ0})
\begin{equation}\label{DGEQ}
{\mathcal{L}} u_i=f(i, u_i)\qquad{\mbox{ for all }}i\in h\Z^2.
\end{equation}
We assume that $f: h \Z^2 \times \R \to \R$ satisfies the following condition: there exist a function $L_f^+: h \Z^2 \times \R \to \R $ and a finite positive constant $\kappa_0^+$ such that
\begin{equation}\label{hp:GENERALsemilinearitaconspazio}
\sum_{j=1}^{2} \frac{ \big| f(i+he_j, u_{i+he_j})-f(i, u_i)- L_f^+ (i, u_i)(u_{i+he_j}-u_i)\big| }{h} \le \kappa_0^+ \,  h , \quad \text{for all } \, i \in h\Z^2 . 
\end{equation}

A significant particular case is given by a function $f: h \Z^2 \times \R \to \R$ which is of class $C^1$ in the real variable, that is, 
\begin{equation}\label{hp:regularitysemilinearity}
f(i, \cdot) \in C^1 (\R) , \quad \text{for all } \, i \in h \Z^2 ,
\end{equation}  
and satisfies \eqref{hp:GENERALsemilinearitaconspazio} with $L_f^+ (i,u_i) = f'(i,u_i)$, where $f'$ denotes the derivative of $f$ with respect to the real variable. In this case, \eqref{hp:GENERALsemilinearitaconspazio} reads as
\begin{equation}\label{hp:semilinearitaconspazio}
\sum_{j=1}^{2} \frac{ \big| f(i+he_j, u_{i+he_j})-f(i, u_i)- f' (i, u_i)(u_{i+he_j}-u_i)\big| }{h} \le \kappa_0^+ \,  h , \quad \text{for all } \, i \in h\Z^2 .
\end{equation}
We observe that assumption~\eqref{hp:semilinearitaconspazio}
states, in a quantitative way, that the dependence on the
site of the nonlinearity~$f$ is ``negligible'' (namely,
the nonlinearity ``mostly'' depends on the state parameter~$u_i$,
rather than on~$i$), see also page~\pageref{LI0} for additional comments on this
assumption in comparison with the continuous framework.

In this setting, we introduce the following notation:
for every~$j\in\{1,2\}$ and any~$i \in h\Z^2$, we let
\begin{equation}\label{DER} {\mathcal{D}}_j^+ u_i:=\frac{ u_{i+he_j} -u_i}{h}
\qquad{\mbox{and}}\qquad {\mathcal{D}}_j^- u_i:=
\frac{ u_i-u_{i-he_j}}{h}.
\end{equation}
The operators in~\eqref{DER} can be seen as discrete increments
that converge to the standard derivative
as~$h\searrow0$.

We will suppose that the solution~$u$ of~\eqref{DGEQ}
satisfies some structural assumptions that we now describe in
detail. Our main assumption is that
\begin{equation}\label{hp:graddiversodazero}
\sum_{j=1}^2 |u_{i+h e_j} - u_i|^2 > 0 , \qquad{\mbox{for all }}i\in h\Z^2.
\end{equation}
Condition \eqref{hp:graddiversodazero} requires that the ``squared norm of the gradient'' $ \sum_{{1\le j\le 2}} | {\mathcal{D}}_j^+ u_i|^2$ does not vanish, for all $i \in h \Z^2$.

In addition, we assume that
\begin{equation}\label{BOULI}
\kappa_1^+:=\sup_{{i\in h\Z^2}\atop{1\le j\le2}}|{\mathcal{D}}_j^+ u_i|<+\infty.
\end{equation}
Roughly speaking, one can consider~\eqref{BOULI} as
a ``Lipschitz'' assumption on the solution~$u$.

Following~\cite{MR2014827}, it is convenient
to use a complex variable notation, identifying~$\R^2$ with~$\C$.
To this end, we set~${\mathcal{I}}:=\sqrt{-1}$ and
\begin{equation} \label{1.7BIS} {U_i^+}:=
{\mathcal{D}}_1^+ u_i+{\mathcal{I}}\,
{\mathcal{D}}_2^+ u_i=\frac{
\big(u_{i+he_1} -u_i\big)+{\mathcal{I}}\,
\big(u_{i+he_2} -u_i\big)}h.
\end{equation}

Since by \eqref{hp:graddiversodazero} we have that $|{U_i^+}| > 0$ for any $i \in h\Z^2$, then $ {U_i^+} / |{U_i^+}|$ is a function from $h \Z^2$ to the unit sphere $\mathcal{S}^1$ of $\R^2$.
Thus, using a polar representation, there exists
\begin{equation}\label{PIKS}
\vartheta^+_i \in \left( - \pi , \pi \right]
\end{equation}
such that
$${U_i^+}=\rho_i^+\, e^{{\mathcal{I}}\vartheta^+_i} ,$$
where 
$$
\rho_i^+ := |{U_i^+}|.
$$
In light of~\eqref{BOULI}, we also know that
\begin{equation}\label{D23ecs824}
\sup_{i\in h\Z^2} \rho_i^+=\sup_{i\in h\Z^2}|{U_i^+}|=
\sup_{i\in h\Z^2}\big( \sqrt{ |{\mathcal{D}}_1^+ u_i|^2 +|{\mathcal{D}}_2^+ u_i|^2 }\big)\le
\sqrt{2} \sup_{{i\in h\Z^2}\atop{1\le j\le2}} |{\mathcal{D}}_j^+ u_i|= \sqrt{2} \, \kappa_1^+ .
\end{equation}

We will now state some regularity assumptions on~$\vartheta^+$
and~$\rho$. 
First of all, we take some integrability hypotheses, supposing that
\begin{equation}\label{K2}
\kappa_2^+:=
\sum_{{1\le j\le 2}\atop{i\in h\Z^2}}
(\rho_i^+)^2 \,\Big(
|{\mathcal{D}}_j^+\vartheta^+_i|\,|{\mathcal{D}}_j^+ ({\mathcal{D}}_j^+ \vartheta^+)_{i-he_j}|+
|{\mathcal{D}}_j^-\vartheta^+_i|\,|{\mathcal{D}}_j^- ({\mathcal{D}}_j^- \vartheta^+)_{i+he_j}|
\Big)<+\infty.
\end{equation}
We observe that~\eqref{K2} is satisfied provided that the angular function~$\vartheta^+$
``does not oscillate'' too much at infinity. We now take additional assumptions
in this spirit by supposing that~$\vartheta^+$ is suitably close to a limit angle
at infinity. To this end, for all~$j\in\{1,2\}$ and all~$i\in h\Z^2$,
we 
introduce the notation
\begin{equation}\label{0ok8uh7gfr8} {\mathcal{L}}_j {u}_i:=
\frac{ u_{i+he_j} + u_{i-he_j} -2u_i}{h^2}.\end{equation}
Notice that
\begin{equation}\label{2.2BIS}
{\mathcal{L}}:={\mathcal{L}}_1+{\mathcal{L}}_2.\end{equation}
We assume that there exists~$\vartheta^+_\infty \in \left( -\pi,\pi \right] $
such that the following assumptions hold true:
\begin{equation}\label{K33}\begin{split}&\kappa_3^+:=
\sum_{{1\le j\le2}\atop{i\in h\Z^2}}
(\rho_i^+)^2\,
\Big( |{\mathcal{D}}_j^+ \vartheta^+_i|^3+|{\mathcal{D}}_j^- \vartheta^+_i|^3\Big)\,|\vartheta^+_i-\vartheta^+_\infty|<+\infty,
\\&
\kappa_4^+:=\sum_{{1\le j\le2}\atop{i\in h\Z^2}}\rho_i^+\,
\big( |{\mathcal{D}}_j^+\rho_i^+|\;|{\mathcal{D}}_j^+ \vartheta^+_i|^2
+
|{\mathcal{D}}_j^-\rho_i^+|\;|{\mathcal{D}}_j^- \vartheta^+_i|^2
\big)\,|\vartheta^+_i-\vartheta^+_\infty| <+\infty
\\&
\kappa_5^+:=
\kappa_0^+ \, \sum_{i\in h\Z^2}\rho_i^+ \,|\vartheta^+_i-\vartheta^+_\infty|<+\infty,
\\&
\kappa_6^+:=
\sum_{{1\le j\le2}\atop{i\in h\Z^2}}\Big(
|{\mathcal{D}}_j^+(\rho_i^+)^2|\,|{\mathcal{D}}_j^+({\mathcal{D}}_j^+\vartheta^+)_i|+
|{\mathcal{D}}_j^-(\rho_i^+)^2|\,|{\mathcal{D}}_j^-({\mathcal{D}}_j^-\vartheta^+)_i|+
h(\rho_i^+)^2\,|{\mathcal{L}}^2_j\vartheta^+_{i}|\Big)\,|\vartheta^+_i-\vartheta^+_\infty| < +\infty ,
\\&
\kappa_7^+:=
\sum_{{1\le j\le2}\atop{i\in h\Z^2}}\Big(
|{\mathcal{D}}_j^+\rho_i^+|^2\,
|{\mathcal{D}}_j^+\vartheta^+_i|
+|{\mathcal{D}}_j^-\rho_i^+|^2\,|{\mathcal{D}}_j^-\vartheta^+_i|\Big)\,|\vartheta^+_i-\vartheta^+_\infty| < +\infty .
\end{split}
\end{equation}

In this setting, our main result\footnote{We observe that the quantities~$\kappa^\pm_m$,
with~$m\in\{0,\dots,7\}$ will be taken to be finite, otherwise the estimates of the main results
would be trivial, possibly with a right-hand side equal to infinity; in any case,
these quantities are not assumed to be bounded uniformly in~$h$.

However, when these quantities happen to be bounded uniformly in~$h$, the estimate in~\eqref{FORM}
becomes particularly significant, since it bounds the mismatch between the discrete and continuous models
linearly in the mesh parameter~$h$.} here is as follows:

\begin{theorem}\label{DGDG}
Let $f: h \Z^2 \times \R \to \R$ satisfy  \eqref{hp:GENERALsemilinearitaconspazio}.
Let~$u:h\Z^2\to\R$ be a solution of~\eqref{DGEQ},
satisfying~\eqref{hp:graddiversodazero},
\eqref{BOULI}, \eqref{K2},
and~\eqref{K33}.

Then,
\begin{equation}\label{FORM}
\sum_{{1\le j\le 2}\atop{i\in h\Z^2}}
(\rho_i^+)^2 \;\Big(
|{\mathcal{D}}_j^+\vartheta^+_i|^2+|{\mathcal{D}}_j^-\vartheta^+_i|^2
\Big)\le C h ,
\end{equation}
where~$C>0$ is given by
\begin{equation}\label{eq:constant C}
C:= 4 \left( \kappa_2^++ 2 e^{2\pi}\kappa_3^++ 2 e^{2\pi}\kappa_4^++2 \kappa_5^+ + \kappa_6^+ + \kappa_7^+ \right) .
\end{equation}
\end{theorem}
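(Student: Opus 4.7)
The plan is to adapt Farina's complex-variable strategy~\cite{MR2014827} to the lattice, carefully tracking the discrete remainders encoded by $\kappa_2^+,\dots,\kappa_7^+$. The starting point is to derive a linearized equation for the complex-valued discrete gradient $U^+$. Applying $\mathcal{D}_j^+$ to both sides of~\eqref{DGEQ} and using the commutation $\mathcal{D}_j^+\mathcal{L}=\mathcal{L}\mathcal{D}_j^+$ on the lattice, I get $\mathcal{L}(\mathcal{D}_j^+u)_i=\mathcal{D}_j^+[f(\cdot,u_\cdot)]_i$, and then~\eqref{hp:GENERALsemilinearitaconspazio} replaces the right-hand side by $L_f^+(i,u_i)\,\mathcal{D}_j^+u_i$ up to an error of magnitude at most (a constant times) $\kappa_0^+ h$. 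Combining the $j=1,2$ equations into~\eqref{1.7BIS} yields
\[
\mathcal{L}U_i^+ \;=\; L_f^+(i,u_i)\,U_i^+ \;+\; E_i,\qquad |E_i|\le \kappa_0^+ h.
\]

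Next I would take the imaginary part of $\bar U_i^+\mathcal{L}U_i^+$. Since $L_f^+$ is real, the term $L_f^+|U_i^+|^2$ drops out and the pointwise estimate $|\operatorname{Im}(\bar U_i^+\mathcal{L}U_i^+)|\le \rho_i^+\,\kappa_0^+ h$ follows. Writing $U_i^+=\rho_i^+e^{\mathcal{I}\vartheta_i^+}$, direct computation gives, for each coordinate $j$,
\[
h^2\operatorname{Im}(\bar U_i^+\mathcal{L}_j U_i^+)=\rho_i^+\rho_{i+he_j}^+\sin(\vartheta_{i+he_j}^+-\vartheta_i^+)-\rho_i^+\rho_{i-he_j}^+\sin(\vartheta_i^+-\vartheta_{i-he_j}^+),
\]
which is the discrete analogue of $\operatorname{div}((\rho^+)^2\nabla\vartheta^+)$: expanding $\sin x=x+O(x^3)$ produces a linear piece plus a cubic remainder which, once multiplied by $|\vartheta^+-\vartheta^+_\infty|$ and summed, is exactly absorbed by $\kappa_3^+$ (and by $\kappa_4^+$ when the $\rho_{i\pm he_j}^+-\rho_i^+$ discrepancy is developed).

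The heart of the argument is then to multiply the resulting pointwise identity by $(\vartheta_i^+-\vartheta^+_\infty)$ and to sum over $i\in h\Z^2$, performing a discrete summation by parts. In the continuum this reads $\int(\vartheta-\vartheta_\infty)\operatorname{div}(\rho^2\nabla\vartheta)=-\int\rho^2|\nabla\vartheta|^2$, so the leading term produces precisely the sum appearing on the left-hand side of~\eqref{FORM}. On the lattice the summation by parts is not exact: each reindexing generates a commutator between the shift by $\pm he_j$ and multiplication by $(\rho^+)^2$, and a second commutator when two discrete derivatives are moved across one another. These are exactly the quantities controlled by $\kappa_2^+$ (commutator of $\mathcal{D}_j^\pm$ with itself applied to $\vartheta^+$, appearing in the leading summation-by-parts step), $\kappa_4^+$ and $\kappa_7^+$ (derivatives of $\rho^+$ interacting with $\mathcal{D}_j^\pm\vartheta^+$), and $\kappa_6^+$ (second-order shift commutators and the extra $h\,\mathcal{L}^2_j\vartheta^+$ term needed to collapse a triple Laplacian back onto $\vartheta^+$); the right-hand side term $E_i$ contributes $\kappa_5^+$ once $|U_i^+|=\rho_i^+$ is factored out. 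Collecting everything gives~\eqref{FORM} with $C$ as in~\eqref{eq:constant C}; the factor $e^{2\pi}$ appearing in front of $\kappa_3^+$ and $\kappa_4^+$ would come from estimating ratios such as $|\sin x/x|$ or similar trigonometric quotients on the range $|\vartheta^+-\vartheta^+_\infty|\le 2\pi$ allowed by~\eqref{PIKS}.

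The main obstacle, as is typical in discrete-to-continuum arguments of this flavour, is the precise accounting of the $O(h)$ remainders: one has to pick a telescoping order and a summation-by-parts pattern that channels every error into exactly one of $\kappa_2^+,\dots,\kappa_7^+$, rather than producing cross terms that cannot be reabsorbed. The nonlinearity of the polar decomposition $U^+=\rho^+e^{\mathcal{I}\vartheta^+}$, together with the trigonometric identities required to expand $\sin(\vartheta_{i\pm he_j}^+-\vartheta_i^+)$ beyond the linear approximation, makes this bookkeeping considerably more delicate than in the continuum, and is the step where the Lipschitz bound~\eqref{BOULI} and the finiteness of the quantities in~\eqref{K2}--\eqref{K33} enter in a genuinely nontrivial way.
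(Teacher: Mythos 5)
Your overall strategy is the right one — the linearized complex equation, the polar decomposition, the expansion of the imaginary part, and the channeling of discrete remainders into $\kappa_2^+,\dots,\kappa_7^+$ — and your variant (computing $\operatorname{Im}(\bar U_i^+\mathcal{L}_j U_i^+)$ directly as a difference of $\rho\rho\sin(\Delta\vartheta^+)$ terms, then expanding $\sin$) is a legitimate alternative to the paper's route through the product rule for $\mathcal{L}$ applied to $\rho^+e^{\mathcal{I}\vartheta^+}$ and the Taylor expansion of $e^{\mathcal{I}t}$. The constant $e^{2\pi}$ in the paper actually comes from bounding the exponential-series remainders on $|t|<2\pi$, not from $|\sin x/x|$, so your variant would yield different numerical constants, but that is cosmetic.

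The real gap is the step where you ``multiply by $(\vartheta_i^+-\vartheta^+_\infty)$ and sum over $i\in h\Z^2$, performing a discrete summation by parts.'' As written this is not justified: the discrete integration-by-parts formulas (\eqref{BYPa1}--\eqref{BYPa2}) require one of the two factors to have finite support, and none of the hypotheses give you a priori that the series $\sum_{i,j}(\rho_i^+)^2\big(|\mathcal{D}_j^+\vartheta_i^+|^2+|\mathcal{D}_j^-\vartheta_i^+|^2\big)$, which is the quantity you want to bound, is finite to begin with. The paper resolves this with a cutoff function $\tau^{(R)}:=(\varphi^{(R)})^2$ multiplying $\widetilde\vartheta^+:=\vartheta^+-\vartheta^+_\infty$, and this is where the actual Liouville-type argument lives: after summing by parts one picks up boundary terms of the form $(\rho^+)^2\widetilde\vartheta^+\mathcal{D}_j^\mp\vartheta^+\mathcal{D}_j^\pm\tau^{(R)}$, which are supported in the annulus $|i|\in[R/2,4R]$ and are absorbed by a Cauchy--Schwarz inequality against a small fraction of the main term; a first pass then establishes finiteness of the left-hand side of~\eqref{FORM}, which makes the annulus contribution vanish as $R\nearrow+\infty$, and a second pass delivers the clean estimate. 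Without this two-step absorption mechanism (and the cutoff that supports it), the inequality~\eqref{FORM} does not follow from the linearized identity alone — you would only get a formal identity, not a bound.

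A secondary point: the attribution of the $\kappa$'s to specific error sources in your sketch is roughly correct in spirit, but the paper's $\kappa_2^+$ arises from the mismatch $\mathcal{D}_j^-\vartheta^+_i-\mathcal{D}_j^+\vartheta^+_i=-h\mathcal{L}_j\vartheta^+_i$ that appears when symmetrizing the product $\mathcal{D}_j^-\vartheta^+\mathcal{D}_j^+\vartheta^+$ into $\tfrac12(|\mathcal{D}_j^+\vartheta^+|^2+|\mathcal{D}_j^-\vartheta^+|^2)$ after the summation by parts against the cutoff — a step your proposal cannot reach without introducing $\tau^{(R)}$ in the first place.
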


As a variant of Theorem~\ref{DGDG}, one can also consider~${U_i^-}:=
{\mathcal{D}}_1^- u_i+{\mathcal{I}}\,
{\mathcal{D}}_2^- u_i$ and~$\rho_i^-:= |{U_i^-}|$.
Thus, under the assumption that
\begin{equation}\label{hp:graddiversodazero-II}
\sum_{j=1}^2 |u_{i-h e_j} - u_i|^2 > 0 , \qquad{\mbox{for all }}i\in h\Z^2,
\end{equation}
we have that~$\rho_i^-\ne0$ for every~$i\in h\Z^2$, whence we can define~$\vartheta^-_i \in \left( - \pi , \pi \right]$
such that~${U_i^-}=\rho_i^-\, e^{{\mathcal{I}}\vartheta^-_i}$.
To obtain a full counterpart of Theorem~\ref{DGDG} that takes into account ``both positive and negative
increments'' it is convenient to complement
\eqref{hp:GENERALsemilinearitaconspazio} with the assumption that there exist a function $L_f^-: h \Z^2 \times \R \to \R $ and a constant $\kappa_0^- > 0$ such that
\begin{equation}\label{hp:GENERAL CON MENO semilinearitaconspazio}
\sum_{j=1}^{2} \frac{ \big| f(i, u_i) - f(i-he_j, u_{i-he_j}) - L_f^- (i, u_i)(u_i - u_{i-he_j})\big| }{h} \le \kappa_0^- \,  h , \quad \text{for all } \, i \in h\Z^2 . 
\end{equation}
Analogously, assumptions~\eqref{BOULI}, \eqref{K2}, and~\eqref{K33} will be combined with the following conditions:
\begin{equation}\label{COND-KA-II}
\begin{split}&
\kappa_1^-:=\sup_{{i\in h\Z^2}\atop{1\le j\le2}}|{\mathcal{D}}_j^- u_i|<+\infty,\\&
\kappa_2^-:=
\sum_{{1\le j\le 2}\atop{i\in h\Z^2}}
(\rho_i^-)^2 \,\Big(
|{\mathcal{D}}_j^+\vartheta^-_i|\,|{\mathcal{D}}_j^+ ({\mathcal{D}}_j^+ \vartheta^-)_{i-he_j}|+
|{\mathcal{D}}_j^-\vartheta^-_i|\,|{\mathcal{D}}_j^- ({\mathcal{D}}_j^- \vartheta^-)_{i+he_j}|
\Big)<+\infty,\\
&\kappa_3^-:=
\sum_{{1\le j\le2}\atop{i\in h\Z^2}}
(\rho_i^-)^2\,
\Big( |{\mathcal{D}}_j^+ \vartheta^-_i|^3+|{\mathcal{D}}_j^- \vartheta^-_i|^3\Big)\,|\vartheta^-_i-\vartheta^-_\infty|<+\infty,
\\&
\kappa_4^-:=\sum_{{1\le j\le2}\atop{i\in h\Z^2}}\rho_i^-\,
\big( |{\mathcal{D}}_j^+\rho_i^-|\;|{\mathcal{D}}_j^+ \vartheta^-_i|^2
+
|{\mathcal{D}}_j^-\rho_i^-|\;|{\mathcal{D}}_j^- \vartheta^-_i|^2
\big)\,|\vartheta^-_i-\vartheta^-_\infty| <+\infty
\\&
\kappa_5^-:=
\kappa_0^- \, \sum_{i\in h\Z^2}\rho_i^- \,|\vartheta^-_i-\vartheta^-_\infty|<+\infty,
\\&
\kappa_6^-:=
\sum_{{1\le j\le2}\atop{i\in h\Z^2}}\Big(
|{\mathcal{D}}_j^+(\rho_i^-)^2|\,|{\mathcal{D}}_j^+({\mathcal{D}}_j^+\vartheta^-)_i|+
|{\mathcal{D}}_j^-(\rho_i^-)^2|\,|{\mathcal{D}}_j^-({\mathcal{D}}_j^-\vartheta^-)_i|+
h(\rho_i^-)^2\,|{\mathcal{L}}^2_j\vartheta^-_{i}|\Big)\,|\vartheta^-_i-\vartheta^-_\infty| < +\infty ,
\\&
\kappa_7^-:=
\sum_{{1\le j\le2}\atop{i\in h\Z^2}}\Big(
|{\mathcal{D}}_j^+\rho_i^-|^2\,
|{\mathcal{D}}_j^+\vartheta^-_i|
+|{\mathcal{D}}_j^-\rho_i^-|^2\,|{\mathcal{D}}_j^-\vartheta^-_i|\Big)\,|\vartheta^-_i-\vartheta^-_\infty| < +\infty .
\end{split}\end{equation}
for some~$\vartheta^-_\infty \in \left( -\pi,\pi \right] $.
Then, we set~$\kappa_m:=\kappa_m^++\kappa_m^-$ for all~$m\in\{0,\dots,7\}$ and we have the following rigidity result:

\begin{theorem}\label{DGDG-GE}
Let $f: h \Z^2 \times \R \to \R$ satisfy \eqref{hp:GENERALsemilinearitaconspazio} and \eqref{hp:GENERAL CON MENO semilinearitaconspazio}.

Let~$u:h\Z^2\to\R$ be a solution of~\eqref{DGEQ},
satisfying~\eqref{hp:graddiversodazero},
\eqref{BOULI}, \eqref{K2}, \eqref{K33},
\eqref{hp:graddiversodazero-II}, and~\eqref{COND-KA-II}.

Then,
\begin{equation}\label{TUTTA}
\sum_{{1\le j\le 2}\atop{i\in h\Z^2}}\Big[
(\rho_i^+)^2 \;\Big(
|{\mathcal{D}}_j^+\vartheta^+_i|^2+|{\mathcal{D}}_j^-\vartheta^+_i|^2
\Big)
+(\rho_i^-)^2 \;\Big(
|{\mathcal{D}}_j^+\vartheta^-_i|^2+|{\mathcal{D}}_j^-\vartheta^-_i|^2
\Big)\Big]\le C h ,
\end{equation}
where~$C>0$ is given by
\begin{equation*}
C:= 4 \left( \kappa_2 + 2 e^{2\pi}\kappa_3+ 2 e^{2\pi}\kappa_4+2 \kappa_5 + \kappa_6 + \kappa_7 \right) .
\end{equation*}
\end{theorem}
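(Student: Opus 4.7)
The plan is to reduce Theorem~\ref{DGDG-GE} to two applications of Theorem~\ref{DGDG}. A first application, directly to~$u$, immediately yields the $\vartheta^+$--part of the left-hand side of~\eqref{TUTTA}, bounded by~$4(\kappa_2^++2e^{2\pi}\kappa_3^++2e^{2\pi}\kappa_4^++2\kappa_5^++\kappa_6^++\kappa_7^+)\,h$. To obtain the $\vartheta^-$--part, the strategy is to apply Theorem~\ref{DGDG} to the reflected function $\tilde u_i := u_{-i}$, and then translate the resulting inequality back into a statement about $u$ via the change of summation index $i\mapsto -i$.

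For the second application, I would first check that $\tilde u$ satisfies the hypotheses of Theorem~\ref{DGDG}. Since ${\mathcal L}$ commutes with the reflection $i\mapsto -i$, the function~$\tilde u$ solves~\eqref{DGEQ} with nonlinearity $\tilde f(i,v):=f(-i,v)$; setting $\tilde L_f^+(i,v):=L_f^-(-i,v)$, a direct substitution rewrites~\eqref{hp:GENERAL CON MENO semilinearitaconspazio} for~$f$ as exactly~\eqref{hp:GENERALsemilinearitaconspazio} for~$\tilde f$, with constant~$\kappa_0^-$. The identity
$$U_i^+(\tilde u)={\mathcal D}_1^+\tilde u_i+{\mathcal I}\,{\mathcal D}_2^+\tilde u_i=-\bigl({\mathcal D}_1^- u_{-i}+{\mathcal I}\,{\mathcal D}_2^- u_{-i}\bigr)=-U_{-i}^-(u)$$
then gives $\rho_i^+(\tilde u)=\rho_{-i}^-(u)$ and, with a consistent branch choice, $\vartheta_i^+(\tilde u)=\vartheta_{-i}^-(u)+\pi$ (mod~$2\pi$), with corresponding limit $\vartheta_\infty^+(\tilde u)=\vartheta_\infty^-(u)+\pi$. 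Under this correspondence, \eqref{hp:graddiversodazero-II}, the bound on~$\kappa_1^-$, and the definition of~$\kappa_2^-$ in~\eqref{COND-KA-II} become~\eqref{hp:graddiversodazero},~\eqref{BOULI}, and~\eqref{K2} for~$\tilde u$; the remaining lines of~\eqref{COND-KA-II} translate into~\eqref{K33} for~$\tilde u$, with $\kappa_m^+(\tilde u)=\kappa_m^-(u)$ for all $m\in\{0,\ldots,7\}$.

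Finally, the change-of-variable bookkeeping uses
$$|{\mathcal D}_j^\pm\vartheta_i^+(\tilde u)|=|{\mathcal D}_j^\mp\vartheta_{-i}^-(u)|,$$
together with the invariance of the sum over~$i\in h\Z^2$ under $i\mapsto -i$. Consequently, the conclusion~\eqref{FORM} of Theorem~\ref{DGDG} applied to~$\tilde u$ rewrites as the $\vartheta^-$--part of the left-hand side of~\eqref{TUTTA}, bounded by~$4(\kappa_2^-+2e^{2\pi}\kappa_3^-+2e^{2\pi}\kappa_4^-+2\kappa_5^-+\kappa_6^-+\kappa_7^-)\,h$. Summing the two bounds yields~\eqref{TUTTA} with the announced constant.

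I expect the main obstacle to be the careful verification that every summand in~\eqref{K2}--\eqref{K33} transforms correctly under the reflection. In particular, one has to check that the $\pi$--shift in the angular variable does not spoil the factors~$|\vartheta_i-\vartheta_\infty|$: this is handled by noting that this quantity depends only on the angular difference modulo~$2\pi$, and that both~$\vartheta_i^+(\tilde u)$ and~$\vartheta_\infty^+(\tilde u)$ are shifted by the same constant~$\pi$, so their difference is invariant. Once this bookkeeping is in place, the argument is a clean reduction and no new analytic input beyond Theorem~\ref{DGDG} is required.
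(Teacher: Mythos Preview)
Your proposal is correct and follows exactly the route sketched in the paper, which simply states that Theorem~\ref{DGDG-GE} ``would then follow by a spatial symmetry argument'' from Theorem~\ref{DGDG} and gives no further details. Your reflection $\tilde u_i := u_{-i}$ together with the uniform $+\pi$ shift of the angular variable is precisely such an argument, and your identification of the branch--choice bookkeeping as the only delicate point is accurate; the one thing to make explicit is that Theorem~\ref{DGDG} (via its proof) only needs $\vartheta_i^+$ to lie in a fixed interval of length~$2\pi$, so taking $\vartheta_i^+(\tilde u):=\vartheta_{-i}^-(u)+\pi\in(0,2\pi]$ uniformly (rather than renormalizing into $(-\pi,\pi]$) guarantees that all discrete increments, and hence all $\kappa_m^+(\tilde u)$, coincide exactly with the corresponding $\kappa_m^-(u)$.
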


We observe that estimates~\eqref{FORM} and~\eqref{TUTTA} provide quantitative rigidity results. Indeed,
in the formal limit as~$h\searrow0$, if $C h$ tends to $0$ then the quantities	
\begin{equation}\label{VAN} \sum_{{1\le j\le 2}\atop{i\in h\Z^2}}
(\rho_i^\pm)^2 \;\Big(
|{\mathcal{D}}_j^+\vartheta^\pm_i|^2+|{\mathcal{D}}_j^-\vartheta^\pm_i|^2
\Big)\end{equation}
become infinitesimal. In particular, the vanishing of~\eqref{VAN}
would correspond to a constant direction of the gradient (in all regions where the gradient itself does
not vanish). See Lemma~\ref{KK-1D} for a precise formulation of
the one-dimensional symmetry property related
to these conditions.

Theorem~\ref{DGDG-GE} is a perfect counterpart of Theorem~\ref{DGDG}, therefore
in this paper we will mostly focus on the first of these results.

We observe that, for a given~$h\in(0,1]$ (i.e., even without taking limits),
a simple byproduct of~\eqref{FORM} and of the fact, due to~\eqref{hp:graddiversodazero}, that~$(\rho_i^+)^2 = h^{-2} \sum\limits_{j=1}^2 |u_{i+h e_j} - u_i|^2 >0$ is that, for all~$i\in h\Z^2$ and all~$j\in\{1,2\}$,
$$ |\vartheta^+_{i+he_j}-\vartheta^+_i|^2=h^2|{\mathcal{D}}_j^+\vartheta^+_i|^2\le\frac{Ch^3}{(\rho_i^+)^2} =
\frac{C h^5}{\sum\limits_{j=1}^2 |u_{i+h e_j} - u_i|^2},
$$
which gives an explicit bound on the discrete variation of the angular function
in terms of the size of the lattice and the assumption in~\eqref{hp:graddiversodazero}.

In this spirit, we mention that assumptions~\eqref{K2} and~\eqref{K33}
are, a-posteriori, consistent with the (approximate) constancy of the angular function,
in the sense that if~$\vartheta^+$ is constant, then conditions~\eqref{K2} and~\eqref{K33}
are obviously fulfilled.

We point out that the summability conditions
in~\eqref{K2} and~\eqref{K33}
are specific for the discrete case and do not have a clear
counterpart in the continuous case. As a matter of fact,
roughly speaking, at a formal level, the quantities introduced
in~\eqref{K2} and~\eqref{K33} are multiplied by~$h$
in~\eqref{FORM} and therefore this product
formally disappears in the continuous limit.

On the other hand,
one could also consider a continuous analogue of the discrete
conditions in~\eqref{K2} and~\eqref{K33}
simply by replacing increments by derivatives
and sums with integrals: this formal passage to the limit would
correspond to several integrability conditions which, as far as we
are aware of, do not appear in the literature related to symmetry
properties of semilinear elliptic partial differential equations.
However all these integrability conditions would be obviously
satisfied by one-dimensional solutions (since the corresponding
phase of the gradient would be constant in space), therefore,
a posteriori, these conditions
do not trivialize the space of solutions in the continuous setting.
\medskip

We observe that Theorems \ref{DGDG} and~\ref{DGDG-GE} possess a neat mechanical interpretation
according to Figure~\ref{FIG-FK}. Indeed,
recalling~\eqref{LDTG},
potentials of particular interests are the ones only depending
on the height (say, of the form~$V(i, u_i)=\hat{V} (u_i)$),
and for instance the gravitational potential is of this form.

And it is of course of particular interest to understand
equilibrium configurations when the tracks become denser and denser (that is for smaller and smaller~$h$).

Theorems \ref{DGDG} and~\ref{DGDG-GE}
address quantitatively this question,
by establishing that for ``very dense'' tracks
and potentials depending ``almost only on the height''
then equilibria are ``almost flat'' configurations,
in the sense that their increments have ``almost constant''
direction in the plane -- the precise quantification
of this rough statement being given by the bound in~\eqref{FORM}.
\medskip

We also provide an observation of geometric flavor, stating that
when the left-hand side of~\eqref{TUTTA} vanishes identically the function~$u:h\Z^2\to\R$ is
one-dimensional, in the sense that it can be reconstructed by a one-dimensional function~$\widetilde{u}:h\Z\to\R$
(and this also highlights the fact that Theorem~\ref{DGDG-GE}
can be seen as a one-dimensional, quantitative, stability result).

\begin{lemma}\label{KK-1D}
Let~$u:h\Z^2\to\R$ be such that
\begin{equation}\label{der-2se}
\mathcal{D}^+_2 u_i\ne0\qquad{\mbox{and}}\qquad\mathcal{D}^-_2 u_i\ne0
\end{equation}
for all~$i\in h\Z^2$,
and assume that
\begin{equation}\label{noinrthoch45}
\sum_{{1\le j\le 2}\atop{i\in h\Z^2}}\Big[
(\rho_i^+)^2 \;\Big(
|{\mathcal{D}}_j^+\vartheta^+_i|^2+|{\mathcal{D}}_j^-\vartheta^+_i|^2
\Big)
+(\rho_i^-)^2 \;\Big(
|{\mathcal{D}}_j^+\vartheta^-_i|^2+|{\mathcal{D}}_j^-\vartheta^-_i|^2
\Big)\Big]
=0.\end{equation}
Then, there exist~$\widetilde{u}:h\Z\to\R$, $c^+$, $c^-\in\R$, such that 
\begin{equation}\label{k8i-know}
u_{(hk,hm)}=\sum_{j=0}^{|k|} {\binom {|k|}{j}} (c^{\sigma_k})^j\,(1-c^{\sigma_k})^{|k|-j}\,\widetilde{u}_{h(m+\sigma_k j)},
\end{equation}
for every~$k$, $m\in\Z$, where
$$ \sigma_k:=\begin{cases}
+ & {\mbox{ if }}k>0,\\
-& {\mbox{ if }}k<0,\\
0& {\mbox{ if }}k=0,
\end{cases}$$
and we understand $c^{\sigma_0}=c^0=1$.

In addition,
\begin{equation}\label{Agfoi-doiut}
\frac{u_{i\pm he_1}-u_i}{u_{i\pm he_2}-u_i}=c^\pm
\qquad{\mbox{for all }}i\in h\Z^2.\end{equation}
\end{lemma}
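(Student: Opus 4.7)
The plan is to deduce from \eqref{noinrthoch45} that the angular functions $\vartheta^+$ and $\vartheta^-$ are globally constant on $h\Z^2$, to rewrite this constancy as a linear relation between the horizontal and vertical increments of $u$, and then to unfold the relation by a binomial induction.

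First, \eqref{der-2se} gives $\rho_i^\pm=\sqrt{(\mathcal{D}_1^\pm u_i)^2+(\mathcal{D}_2^\pm u_i)^2}\ge |\mathcal{D}_2^\pm u_i|>0$ at every $i\in h\Z^2$, so both $\vartheta^+_i$ and $\vartheta^-_i$ are well defined. The left-hand side of \eqref{noinrthoch45} is a sum of non-negative terms, so each summand must vanish; combined with $\rho_i^\pm>0$, this forces $\mathcal{D}_j^+\vartheta^\pm_i=\mathcal{D}_j^-\vartheta^\pm_i=0$ for every $i\in h\Z^2$ and every $j\in\{1,2\}$. Since $h\Z^2$ is connected by its nearest-neighbour edges, $\vartheta^+_i$ and $\vartheta^-_i$ are each independent of $i$; denote the two constant values by $\vartheta^+$ and $\vartheta^-$. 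Writing $\mathcal{D}_1^\pm u_i=\rho_i^\pm\cos\vartheta^\pm$ and $\mathcal{D}_2^\pm u_i=\rho_i^\pm\sin\vartheta^\pm$, and using that $\sin\vartheta^\pm\ne 0$ (still by \eqref{der-2se}), one concludes
$$
\frac{u_{i\pm he_1}-u_i}{u_{i\pm he_2}-u_i}=\frac{\mathcal{D}_1^\pm u_i}{\mathcal{D}_2^\pm u_i}=\cot\vartheta^\pm=:c^\pm,
$$
which is exactly \eqref{Agfoi-doiut}.

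To obtain \eqref{k8i-know}, I rewrite \eqref{Agfoi-doiut} as the two one-step recursions $u_{i+he_1}=(1-c^+)u_i+c^+ u_{i+he_2}$ and $u_{i-he_1}=(1-c^-)u_i+c^- u_{i-he_2}$, define $\widetilde{u}_{hm}:=u_{(0,hm)}$, and proceed by induction on $|k|$. The base case $k=0$ is the definition of $\widetilde{u}$ (with the standard convention $0^0=1$ in the single surviving term). For $k\ge 0$, applying the first recursion at $i=(hk,hm)$, inserting the inductive hypothesis into both resulting terms, reindexing one of the two sums by $j\mapsto j-1$, and using Pascal's identity $\binom{k}{j}+\binom{k}{j-1}=\binom{k+1}{j}$ collapses the expression into the claimed binomial sum with exponent $k+1$; the case $k\le 0$ is identical after replacing $c^+$ by $c^-$ and $m+j$ by $m-j$, using the second recursion. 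I do not expect any step to pose a genuine obstacle: the content of the lemma is purely algebraic once the angular constancy is extracted from \eqref{noinrthoch45}, and the only mild care needed concerns the degenerate boundary conventions when $c^\pm\in\{0,1\}$, in which cases the binomial sum collapses to a single non-zero term that matches the recursion directly.
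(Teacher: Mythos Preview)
Your proposal is correct and follows essentially the same route as the paper: deduce from \eqref{noinrthoch45} and $\rho_i^\pm>0$ that $\vartheta^\pm$ is constant on $h\Z^2$, convert this into the ratio relation \eqref{Agfoi-doiut} (the paper phrases it via the constant unit complex number $\omega^\pm=U_i^\pm/|U_i^\pm|$, you via $c^\pm=\cot\vartheta^\pm$, which is the same thing), rewrite it as the one-step recursion $u_{i\pm he_1}=(1-c^\pm)u_i+c^\pm u_{i\pm he_2}$, set $\widetilde u_{hm}:=u_{(0,hm)}$, and finish by induction on $|k|$ using Pascal's identity. The paper carries out the induction for $k\le 0$ (checking $k=0$ and $k=-1$ explicitly before the general step) and declares the case $k\ge 0$ symmetric, while you sketch the $k\ge 0$ case first; otherwise the arguments coincide.
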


We stress that~\eqref{k8i-know} states that the knowledge of the
one-dimensional function~$\widetilde{u}$ is sufficient for the complete knowledge of
the two-dimensional function~$u$. Moreover, the identity
in~\eqref{Agfoi-doiut} can be seen as the discrete counterparts
of continuous identities of the type~$\frac{\partial_1u}{\partial_2u}={\rm const}$
which characterizes smooth functions in~$\R^2$
whose level sets are parallel straight lines.

We also remark that~\eqref{k8i-know} can be seen, formally,
as a discrete analogue of the continuous identity
\begin{equation}\label{k8i-know-CONTIN}
u(x_1,x_2)=\widetilde{u}(cx_1+x_2),
\end{equation}
for some constant~$c$, which expresses the fact the
the function~$u:\R^2\to\R$ is actually
depending on one Euclidean variable in the direction~$(c,1)$:
see Appendix~\ref{APPEDK894}
for further comments on this formal relation.

There is however an interesting conceptual difference
between the discrete relation in~\eqref{k8i-know}
and its continuous counterpart in~\eqref{k8i-know-CONTIN}.
Indeed, in the continuous
case, the value of~$u$ at a given point, say~$(x_1,x_2)$,
is reconstructed by the knowledge of the value of
its one-dimensional representation~$\widetilde{u}$ at precisely
one specific point (namely, in light of~\eqref{k8i-know-CONTIN},
at the point~$cx_1+x_2$). Instead, in the discrete case, the value
of~$u$ at a given site, for instance~$(hk,hm)$ with~$k$, $m\in\Z$
and~$k>0$,
is reconstructed via~\eqref{k8i-know} by the values of
its one-dimensional representation~$\widetilde{u}$ at
several sites, namely~$(0,hm)$,
$(h,hm)$, $\dots$, $(hk,hm)$ (though of course this nonlocal
effect disappears in the continuous limit of infinitesimal~$h$).
\medskip

We emphasize that the symmetry results in the continuous case are necessarily obtained for potentials of the form $f(u)$ that only depend on $u$.
Here, the quantitative nature of our result allows to consider potentials of the form $f(i, u_i)$ that also depend on the position $i \in h \Z^2$.
Notice that, assumption \eqref{hp:semilinearitaconspazio} quantitatively controls the dependence of $f$ on $i$: in the formal limit as $h \searrow 0$, \eqref{hp:semilinearitaconspazio} informs us \label{LI0}
that such dependence tends to disappear, in accordance with the symmetry results known in the continuous case.

We also notice that assumptions \eqref{hp:regularitysemilinearity} and \eqref{hp:semilinearitaconspazio} (and hence in particular \eqref{hp:GENERALsemilinearitaconspazio}) are always satisfied by any semilinearity of the form $f(i,u_i)= \hat{f}(u_i)$ (only depending on $u$), provided that 
\begin{equation}\label{eq:assumptionsufchenondipendedaspazio}
\hat{f} \in C^2(\R) \quad \text{ and } \quad \|\hat{f}''\|_{L^\infty(\R)}<+\infty .
\end{equation}
Indeed, in this case \eqref{hp:regularitysemilinearity} trivially holds true. Moreover, we have
\begin{equation*}
\begin{split}
& \sum_{j=1}^{2} \frac{ \big| f(i+he_j, u_{i+he_j})-f(i, u_i)- f' (i, u_i)(u_{i+he_j}-u_i)\big| }{h}
\\
= \, & \sum_{j=1}^{2} \frac{  \big| \hat{f}( u_{i+he_j})- \hat{f}( u_i) - \hat{f}'( u_i)(u_{i+he_j}-u_i)\big|}h \\=\,&\frac1h\sum_{j=1}^2\left|
\int_{u_i}^{u_{i + h e_j}} \big( \hat{f}'(t)- \hat{f}'(u_i)\big)\,dt
\right|\\ 
\le\,&
\frac{\| \hat{f}''\|_{L^\infty(\R)}}{2h}\sum_{j=1}^2
\,|u_{i+he_j}-u_i|^2\\=\,&\frac{\| \hat{f}''\|_{L^\infty(\R)}\,h}2
\sum_{j=1}^2|{\mathcal{D}}_j^+ u_i
|^2 ,
\end{split}
\end{equation*}
and hence \eqref{hp:semilinearitaconspazio} holds true with $\kappa_0^+ = (\kappa_1^+)^2 \,  \| \hat{f}''\|_{L^\infty(\R)}$.
\medskip

We stress that, in our setting, an exact symmetry result 
analogous to that in the continuous case 
cannot be obtained,
as the examples in Section \ref{sec:examples} show.
In particular, Examples \ref{example ARCTAN} and \ref{example EXPONENTIAL} show that such an exact symmetry result cannot hold true in the discrete case, even if we restrict our analysis to the case of source terms $f(i,u_i)= \hat{f}(u_i)$ that do not depend on the position and satisfy~\eqref{hp:GENERALsemilinearitaconspazio}.

All the examples in Section \ref{sec:examples}
also show that the rate of convergence of the estimate \eqref{FORM} in the formal limit $h \searrow 0$ is optimal in the sense that, in this case, right-hand side and left-hand side are of the same order of $h$. 
\medskip

Here, we will focus on the proof of Theorem~\ref{DGDG}
(the proof of Theorem~\ref{DGDG-GE} would then follow by a spatial symmetry argument).
The approach to prove Theorem~\ref{DGDG}
in this paper relies on the complex variable method introduced in~\cite{MR2014827}
to deal with the original De Giorgi's problem in~\cite{MR533166}
(in our setting, the discrete structure of the lattices
requires a careful estimate on the discrepancies between differentiable functions
and finite increments and concrete bounds on the approximations
performed).\medskip

The strategy of the proof of Theorem~\ref{DGDG}
is based on a useful identity for the increments of the solution
(roughly speaking, this method would be the discrete counterpart of
the study of a ``linearized equation'' in the continuum models setting). This step is accomplished in Lemma \ref{lem:linearized equation}.
Then, the desired result is obtained by an application of a new discrete quantitative
Liouville-type theorem (see the proof of Theorem~\ref{DGDG}).

The complex variable formalism introduced in~\cite{MR2014827}
reveals interesting cancellations when looking at the imaginary parts of
this type of equations: this is an interesting fact which makes
it possible to exploit this method
to all layer potentials, without structural restrictions, since
the potential plays no role in the imaginary part of the limit equation
in the continuum model case, and in the discrete case it only plays
a role in the estimates of the remainders (therefore, in our framework, assumption \eqref{hp:GENERALsemilinearitaconspazio} on the source term~$f$ is required to get
the error estimates, but no condition on the shape of~$f$ is necessary to obtain Theorem~\ref{DGDG}).
\medskip

The technical details of the proof of 
Theorem~\ref{DGDG} are
presented in Section~\ref{0.90m3S}
and exploit also auxiliary computations of elementary flavor
that are collected in Section~\ref{9i8e2udwh83yfgv}.
Section~\ref{0.90m3S} also contains the proof of
Lemma~\ref{KK-1D}.
Then,
in Section \ref{sec:examples} we provide some examples showing the optimality of our estimates.
\medskip

In future projects, we aim at developing the method of this paper
to obtain other forms of approximate counterparts of De Giorgi's conjecture in
the discrete setting especially by addressing approximation results of geometrical nature
and possibly detecting suitable hypotheses which make level sets of discrete solutions
appropriately close to a line, or to a portion of a line.
Besides the methods in~\cite{MR1655510}, for this it will be convenient to revisit
the geometric Poincar\'e formula in~\cite{MR1650327}, as utilized in~\cite{MR2483642},
since this type of inequalities provide natural bounds for the total curvature of the level
sets of the solutions. This technique could also lead to the study of long-range
interaction models, possibly involving infinitely many site interactions with suitable decay at
infinity, by taking advantage of integro-differential versions of the geometric Poincar\'e formula,
as done in~\cite{MR3469920} for the continuous case.\medskip

We conclude this introduction by pointing out a substantial difference between the discrete and the continuous settings. In the continuous case, if $u$ is a $C^2(\R^2)$ function with $| \nabla u|>0$, then $ \nabla u / | \nabla u| \in C^1( \R^2, \mathcal{S}^1)$. In that setting, in place of $\vartheta^+$ we have $\vartheta$, which is a $C^1(\R^2)$ function satisfying $\nabla u = | \nabla u| e^{i \vartheta} $. Notice that $\vartheta$ may be
unbounded in the continuous case\footnote{We remark that in
our notation~$\vartheta$ is a real number, rather than an element of the circle. In this way,
the polar representation of~$ \nabla u / | \nabla u| $
is a continuous function of~$\vartheta$. The boundedness or unboundedness of~$\vartheta$ has therefore
to be understood in this notation and, in particular, the unboundedness of~$\vartheta$ would
correspond to winding around the circle ``infinitely many times''.

A counterexample to the boundedness of~$\vartheta$ in the continuous
case is provided by the function~$u(x_1, x_2)=\sin x_1 -\cos x_2$, $(x_1,x_2) \in \R^2$, which satisfies~$\Delta u=-u$
and~$\nabla u(x_1 , x_2 )=(\cos x_1 , \sin x_2)$ (hence~$\nabla u(t,t)$ corresponds to~$e^{it}$ 
in complex variable notation and thus~$\vartheta(t,t)=t$, which is unbounded).}.
This necessarily leads to require further assumptions on $u$ such as monotonicity in a given direction, or more generally, stability (see \cite{MR2528756}). We recall that a sufficient condition to perform Farina's proof is given by the boundedness of $\vartheta^+$ (see \cite{MR2014827} and \cite{MR2528756}), which is surely verified for instance if $u$ is increasing in a given direction (say $e_2$). Indeed, the monotonicity of $u$ in the $e_2$ direction guarantees that $\nabla u$ does not ``turn backwards'', that is $\vartheta^+ \in \left[ 0, \pi \right] $, and so in particular that $\vartheta^+$ is bounded.

The discrete setting
provides here an interesting difference with respect to the continuous case.
Indeed, being
$\vartheta^+_i$ a function defined on $h\Z^2$, no continuity notions come into play, 
and one is free to choose all the angles in~$(-\pi,\pi]$ (that is, such a normalization
does not conflict with any continuity assumption in the discrete case).
For this reason, in our setting, if~\eqref{hp:graddiversodazero} holds true,
then one can always define the polar angle,
and renormalize it to fulfill~\eqref{PIKS}. In particular, in our framework,
no monotonicity or stability assumption is required, in contrast with the
models in the continuum.

We also stress that assumption \eqref{hp:graddiversodazero} is obviously satisfied if we assume
\begin{equation}
\label{MONO}
u_{i+he_2}>u_i\qquad{\mbox{for all }}i\in h\Z^2 ,
\end{equation}
that is a discrete ``monotonicity assumption''
in the vertical direction.

\section{Toolbox}\label{9i8e2udwh83yfgv}

This section contains some ancillary observations, to be used in the
proof of Theorem~\ref{DGDG} which is contained in Section~\ref{0.90m3S}.
We start by computing the operator $ {\mathcal{L}} $ on the product of two functions.
For this, if~$f$, $g:h\Z^2\to\R$, we write that~$\psi:=fg$ to mean
that, for any~$i\in h\Z^2$, we have~$\psi_i:=f_i g_i$.
We have the following product rule for the increment
quotients introduced in~\eqref{DER}:

\begin{lemma}
Let $f$, $g:h\Z^2\to\R$. Then, for all~$j\in\{1,2\}$,
\begin{equation}\label{PROFGRAD-1}
{\mathcal{D}}_j^+(fg)_i=
\frac{f_{i+he_j}+f_i}2\;{\mathcal{D}}_j^+g_i+
\frac{g_{i+he_j}+g_i}2\;{\mathcal{D}}_j^+f_i
\end{equation}
and
\begin{equation}\label{PROFGRAD-2}
{\mathcal{D}}_j^-(fg)_i=
\frac{f_{i-he_j}+f_i}2\;{\mathcal{D}}_j^-g_i+
\frac{g_{i-he_j}+g_i}2\;{\mathcal{D}}_j^-f_i
.\end{equation}
\end{lemma}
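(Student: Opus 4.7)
The plan is to verify both identities by direct algebraic expansion from the definitions in~\eqref{DER}, since no analytical input beyond a telescoping of cross-terms is needed. Writing $(fg)_i := f_i g_i$, the left-hand side of~\eqref{PROFGRAD-1} is, by definition,
$$\mathcal{D}_j^+(fg)_i = \frac{f_{i+he_j}g_{i+he_j} - f_i g_i}{h}.$$
First I would expand the right-hand side of~\eqref{PROFGRAD-1} by substituting $\mathcal{D}_j^+g_i = (g_{i+he_j}-g_i)/h$ and $\mathcal{D}_j^+f_i = (f_{i+he_j}-f_i)/h$, multiply out the four resulting products, and observe that the cross-terms $-f_{i+he_j}g_i$, $+f_i g_{i+he_j}$, $-g_{i+he_j}f_i$, and $+g_i f_{i+he_j}$ cancel in pairs. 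What survives is exactly $2(f_{i+he_j}g_{i+he_j}-f_ig_i)$, and the factor $1/2$ out front then matches the left-hand side.

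Since the identity~\eqref{PROFGRAD-2} for $\mathcal{D}_j^-$ has exactly the same structure as~\eqref{PROFGRAD-1} with $+he_j$ replaced by $-he_j$, I would simply remark that the same expansion, with the symbol change $i+he_j \mapsto i-he_j$, gives the result. Alternatively, one may set $\widetilde{f}_i := f_{i-he_j}$ and $\widetilde{g}_i := g_{i-he_j}$ and observe that $\mathcal{D}_j^-(fg)_i = \mathcal{D}_j^+(\widetilde{f}\widetilde{g})_{i-he_j}$, reducing the second identity to the first.

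There is no main obstacle here: the statement is an elementary discrete Leibniz rule, and the computation is a finite symmetric expansion that terminates in one line once the cross-terms are cancelled. The only point worth highlighting in the write-up is the symmetric averaging, namely that the correct prefactors are the arithmetic means $(f_{i+he_j}+f_i)/2$ and $(g_{i+he_j}+g_i)/2$ rather than either endpoint alone; this symmetrization is precisely what produces the cancellation of the cross-terms and is the discrete analogue of the product rule $(fg)' = f'g + fg'$ in the limit $h \searrow 0$.
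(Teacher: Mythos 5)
Your proof is correct, and its computational content is the same as the paper's; the only real difference is the direction in which the algebra is run. The paper starts from the left-hand side $\mathcal{D}_j^-(fg)_i$, writes it in two \emph{asymmetric} discrete Leibniz forms, namely
$$\mathcal{D}_j^-(fg)_i = f_i\,\mathcal{D}_j^- g_i + g_{i-he_j}\,\mathcal{D}_j^- f_i
\qquad\text{and}\qquad
\mathcal{D}_j^-(fg)_i = g_i\,\mathcal{D}_j^- f_i + f_{i-he_j}\,\mathcal{D}_j^- g_i,$$
and then adds them and divides by two; this derivation makes it transparent \emph{a priori} why the arithmetic means $\frac{f_{i\mp he_j}+f_i}{2}$, $\frac{g_{i\mp he_j}+g_i}{2}$ and the factor $\tfrac12$ appear. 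You instead start from the symmetric right-hand side, expand, and verify that the four cross-terms cancel, which is an equally valid (if slightly less motivated) check, amounting to the same four-term expansion read in reverse. Your reduction of~\eqref{PROFGRAD-2} to~\eqref{PROFGRAD-1} via the shift $i \mapsto i-he_j$ is sound; the paper does the mirror image, proving~\eqref{PROFGRAD-2} in detail and noting~\eqref{PROFGRAD-1} is similar.
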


\begin{proof} We focus on the proof of~\eqref{PROFGRAD-2},
since the proof of~\eqref{PROFGRAD-1} is similar.
To this end, we point out that
\begin{eqnarray*}
{\mathcal{D}}_j^-(fg)_i&=&\frac{ (fg)_i-(fg)_{i-he_j}}h\\
&=&\frac{ f_i (g_i-g_{i-he_j})+g_{i-he_j}(f_i-f_{i-he_j})}h\\
&=& f_i\,{\mathcal{D}}_j^-g_i+g_{i-he_j}\,{\mathcal{D}}_j^-f_i.
\end{eqnarray*}
Also, exchanging the roles of~$f$ and~$g$,
\begin{eqnarray*}
{\mathcal{D}}_j^-(fg)_i&=& g_i\,{\mathcal{D}}_j^-f_i+f_{i-he_j}\,{\mathcal{D}}_j^-g_i,
\end{eqnarray*}
and therefore
\begin{eqnarray*}
2{\mathcal{D}}_j^-(fg)_i&=& 
\Big(f_i\,{\mathcal{D}}_j^-g_i+g_{i-he_j}\,{\mathcal{D}}_j^-f_i\Big)
+\Big(g_i\,{\mathcal{D}}_j^-f_i+f_{i-he_j}\,{\mathcal{D}}_j^-g_i\Big)\\
&=& (g_{i-he_j}+g_i)\,{\mathcal{D}}_j^-f_i
+(f_{i-he_j}+f_i)\,{\mathcal{D}}_j^-g_i,
\end{eqnarray*}
from which~\eqref{PROFGRAD-2} plainly follows.
\end{proof}

It is also interesting to observe that the increments defined in~\eqref{DER} produce by iteration (a suitable translation and
projection of)
the operator in~\eqref{ELLE}, namely, 
recalling the notation in~\eqref{0ok8uh7gfr8},
the following result holds true:

\begin{lemma}\label{SECLA}
Let $f:h\Z^2\to\R$. Then,
\begin{equation}\label{ITER-01}
 {\mathcal{D}}_j^+ ({\mathcal{D}}_j^+ f)_i=
{\mathcal{L}}_j f_{i+he_j}
\end{equation}
and
\begin{equation}\label{ITER-02}
{\mathcal{D}}_j^- ({\mathcal{D}}_j^- f)_i=
{\mathcal{L}}_jf_{i-he_j}.
\end{equation}
\end{lemma}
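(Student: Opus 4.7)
The plan is to prove both identities by direct unfolding of the definitions in~\eqref{DER} and~\eqref{0ok8uh7gfr8}, with no further ingredients needed. Since both sides of each identity are simple algebraic combinations of three consecutive values of $f$ along the $e_j$-axis, the verification reduces to matching coefficients.

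First I would prove~\eqref{ITER-01}. Applying~\eqref{DER} twice to the forward increment gives
\[
{\mathcal{D}}_j^+ ({\mathcal{D}}_j^+ f)_i = \frac{({\mathcal{D}}_j^+ f)_{i+he_j}-({\mathcal{D}}_j^+ f)_i}{h}=\frac{1}{h}\left(\frac{f_{i+2he_j}-f_{i+he_j}}{h}-\frac{f_{i+he_j}-f_i}{h}\right)=\frac{f_{i+2he_j}-2f_{i+he_j}+f_i}{h^2}.
\]
On the other hand, evaluating ${\mathcal{L}}_j$ at the point $i+he_j$ using~\eqref{0ok8uh7gfr8} yields
\[
{\mathcal{L}}_j f_{i+he_j}=\frac{f_{(i+he_j)+he_j}+f_{(i+he_j)-he_j}-2f_{i+he_j}}{h^2}=\frac{f_{i+2he_j}+f_i-2f_{i+he_j}}{h^2},
\]
which coincides with the previous expression, proving~\eqref{ITER-01}.

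For~\eqref{ITER-02} the argument is completely symmetric. Applying the backward increment twice gives
\[
{\mathcal{D}}_j^- ({\mathcal{D}}_j^- f)_i=\frac{({\mathcal{D}}_j^- f)_i-({\mathcal{D}}_j^- f)_{i-he_j}}{h}=\frac{1}{h}\left(\frac{f_i-f_{i-he_j}}{h}-\frac{f_{i-he_j}-f_{i-2he_j}}{h}\right)=\frac{f_i-2f_{i-he_j}+f_{i-2he_j}}{h^2},
\]
and evaluating ${\mathcal{L}}_j$ at $i-he_j$ produces the same expression, namely $\frac{f_i+f_{i-2he_j}-2f_{i-he_j}}{h^2}$, establishing~\eqref{ITER-02}.

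There is essentially no obstacle here: the statement is a purely algebraic identity on three-point stencils, and the only thing worth noting is the translation by $\pm he_j$ on the right-hand side, which arises because iterating a one-sided finite difference produces a second difference centered at the intermediate point rather than at $i$ itself.
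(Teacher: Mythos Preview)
Your proof is correct and follows exactly the same approach as the paper's: a direct unfolding of the definitions in~\eqref{DER} and~\eqref{0ok8uh7gfr8}. The paper only writes out the computation for~\eqref{ITER-02} and declares~\eqref{ITER-01} similar, whereas you spell out both, but the argument is identical.
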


\begin{proof} We see that, for every~$j\in\{1,2\}$,
\begin{eqnarray*}
{\mathcal{D}}_j^- ({\mathcal{D}}_j^- f)_i&=&\frac{
{\mathcal{D}}_j^- f_i-{\mathcal{D}}_j^- f_{i-he_j}}h\\&=&\frac{
(f_i-f_{i-he_j})-(f_{i-he_j}-f_{i-2he_j})}{h^2}\\
&=& \frac{f_{i-2he_j}+f_{i}-2f_{i-he_j}}{h^2}.
\end{eqnarray*}
This proves~\eqref{ITER-02}, and the proof of~\eqref{ITER-01}
is similar.
\end{proof}

We remark that it is not always convenient to sum~\eqref{ITER-01} and~\eqref{ITER-02}
over~$j\in\{1,2\}$, since the right-hand side depends on~$j$ in such a way that
the operator~${\mathcal{L}}$ does not appear straight away after such a summation.

We also present the following useful computation:

\begin{lemma} \label{LEPROD-ekf}
Let $f$, $g:h\Z^2\to\R$. Then,
\begin{equation}\label{PROD1}
{\mathcal{L}}(fg)_i= {\mathcal{L}} f_i\,g_i+
{\mathcal{L}} g_i\,f_i+
\sum_{j=1}^2
\big( {\mathcal{D}}_j^+f_i\,{\mathcal{D}}_j^+g_i+
{\mathcal{D}}_j^-f_i\,{\mathcal{D}}_j^-g_i \big)
.
\end{equation}
\end{lemma}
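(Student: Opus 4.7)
The plan is to establish the identity one direction $j$ at a time, proving the refined statement
\[
\mathcal{L}_j(fg)_i = f_i\,\mathcal{L}_j g_i + g_i\,\mathcal{L}_j f_i + \mathcal{D}_j^+ f_i\,\mathcal{D}_j^+ g_i + \mathcal{D}_j^- f_i\,\mathcal{D}_j^- g_i,
\]
and then summing over $j\in\{1,2\}$ to obtain \eqref{PROD1} via \eqref{2.2BIS}. The key observation is that, by definition of the discrete increments in \eqref{DER}, we can rewrite the neighboring values as
\[
f_{i+he_j}=f_i+h\,\mathcal{D}_j^+ f_i,\qquad f_{i-he_j}=f_i-h\,\mathcal{D}_j^- f_i,
\]
and analogously for $g$. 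Substituting these expressions into the products $(fg)_{i+he_j}=f_{i+he_j}g_{i+he_j}$ and $(fg)_{i-he_j}=f_{i-he_j}g_{i-he_j}$, the two ``$f_i g_i$'' terms combine with $-2(fg)_i$ to vanish, and the cross terms in $h$ and in $h^2$ can be collected explicitly.

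The only remaining point is to recognize that the linear-in-$h$ leftovers reassemble into the one-dimensional discrete Laplacian: indeed,
\[
h\big(\mathcal{D}_j^+ g_i-\mathcal{D}_j^- g_i\big)=\big(g_{i+he_j}-g_i\big)-\big(g_i-g_{i-he_j}\big)=g_{i+he_j}+g_{i-he_j}-2g_i=h^2\,\mathcal{L}_j g_i,
\]
and likewise for $f$. Once this is noted, dividing the whole identity by $h^2$ yields the one-directional product rule displayed above, with the cross-terms $h^2\big(\mathcal{D}_j^+ f_i\,\mathcal{D}_j^+ g_i+\mathcal{D}_j^- f_i\,\mathcal{D}_j^- g_i\big)$ appearing naturally from the $h^2$ contributions of the expansion.

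The proof is essentially a bookkeeping exercise with no serious obstacle; the main thing to watch is the sign convention in the definition of $\mathcal{D}_j^-$ in \eqref{DER}, which is what makes $\mathcal{D}_j^-f_i\,\mathcal{D}_j^-g_i$ (and not a mixed sign combination) come out on the right-hand side. An alternative derivation, slightly less direct, would apply the product rules \eqref{PROFGRAD-1}--\eqref{PROFGRAD-2} twice and then invoke Lemma \ref{SECLA} to turn iterated one-sided differences into $\mathcal{L}_j$, but this introduces translations by $\pm he_j$ that need to be reconciled; the symmetric Taylor-style expansion above avoids this and is the cleanest route.
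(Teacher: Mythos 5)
Your proof is correct, and it organizes the algebra differently from the paper's. The paper starts from the difference $\mathcal{L}(fg)_i - \mathcal{L}f_i\,g_i$, expands, factors out the increments of $g$, and then adds and subtracts $f_i$ inside the coefficient to split off $\mathcal{L}g_i\,f_i$ and leave the cross terms $\sum_j(\mathcal{D}_j^+f_i\,\mathcal{D}_j^+g_i+\mathcal{D}_j^-f_i\,\mathcal{D}_j^-g_i)$. You instead substitute $f_{i\pm he_j}=f_i\pm h\,\mathcal{D}_j^\pm f_i$ (and likewise for $g$) into $(fg)_{i\pm he_j}$, collect by powers of $h$, and observe that the linear-in-$h$ leftovers rebuild $h^2\,\mathcal{L}_j f_i$ and $h^2\,\mathcal{L}_j g_i$ via the identity $\mathcal{D}_j^+ - \mathcal{D}_j^- = h\,\mathcal{L}_j$. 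Both are elementary rearrangements of the same six quantities; yours has the small advantage of being manifestly symmetric in $f$ and $g$ from the outset and of isolating the one-directional product rule for each $\mathcal{L}_j$ (which the paper's argument also effectively produces, but does not display as a standalone identity), while the paper's route makes it slightly clearer that the lemma is a corrected version of the naive Leibniz rule $\mathcal{L}(fg)\ne \mathcal{L}f\cdot g + \mathcal{L}g\cdot f$. Either way the verification is complete and the sign bookkeeping on $\mathcal{D}_j^-$ is handled correctly in your version.
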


\begin{proof} We observe that
\begin{eqnarray*}&&
{\mathcal{L}} (fg)_i-{\mathcal{L}} f_i\,g_i\\&=&\frac1{h^2}
\sum_{j=1}^2 \big( f_{i+he_j}g_{i+he_j} + f_{i-he_j}g_{i-he_j} -2f_ig_i
- f_{i+he_j}g_i - f_{i-he_j}g_i +2f_ig_i\big)\\&=&\frac1{h^2}
\sum_{j=1}^2 \big( f_{i+he_j}(g_{i+he_j}-g_i) + f_{i-he_j}(g_{i-he_j}-g_i)
\big)\\
&=&\frac1{h^2}\sum_{j=1}^2 \big( (f_{i+he_j}-f_i)(g_{i+he_j}-g_i) 
+f_i(g_{i+he_j}-g_i)
+ (f_{i-he_j}-f_i)(g_{i-he_j}-g_i)+f_i(g_{i-he_j}-g_i)
\big)\\&=&
{\mathcal{L}} g_i\,f_i+\frac1{h^2}
\sum_{j=1}^2 \big( (f_{i+he_j}-f_i)(g_{i+he_j}-g_i) 
+ (f_i-f_{i-he_j})(g_i-g_{i-he_j})
\big).
\end{eqnarray*}
This, together with the notation in~\eqref{DER}, proves~\eqref{PROD1}.
\end{proof}

Now we give the following ``summation by parts'' formula:

\begin{lemma}
Let~$f$, $g:h\Z^2\to\R$. Assume that
\begin{equation}\label{BYPa0}
\#\{i\in h\Z^2{\mbox{ s.t. }}g_i\ne0\}<+\infty.\end{equation}
Then,
\begin{equation}\label{BYPa1}
\sum_{i\in h\Z^2} {\mathcal{D}}_j^+ f_i\, g_i=-\sum_{i\in h\Z^2}f_i\,{\mathcal{D}}_j^-g_i
\end{equation}
and
\begin{equation}\label{BYPa2}
\sum_{i\in h\Z^2} {\mathcal{D}}_j^- f_i\, g_i=-\sum_{i\in h\Z^2}f_i\,{\mathcal{D}}_j^+g_i.
\end{equation}
\end{lemma}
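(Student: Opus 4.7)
The two identities are a discrete integration by parts, and the plan is to prove the first of them directly from the definitions, then observe that the second follows by a symmetric argument (indeed, by applying the first with the roles of $f$ and $g$ interchanged and after an index shift).

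For the first identity, I would start by expanding the definition in~\eqref{DER}:
$$
\sum_{i\in h\Z^2}{\mathcal{D}}_j^+ f_i\,g_i
\;=\;\frac{1}{h}\sum_{i\in h\Z^2}f_{i+he_j}\,g_i\;-\;\frac{1}{h}\sum_{i\in h\Z^2}f_i\,g_i.
$$
The hypothesis~\eqref{BYPa0} that $g$ has finite support guarantees that in each of the two sums above only finitely many terms are nonzero (the second is obvious; for the first, note that the summand vanishes unless $g_i\ne0$). Hence both sums are in fact finite, and all manipulations below are legitimate rearrangements of finite sums.

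The next step is to perform the translation $i \mapsto i-he_j$ in the first sum, which is a bijection of $h\Z^2$ onto itself:
$$
\frac{1}{h}\sum_{i\in h\Z^2}f_{i+he_j}\,g_i
\;=\;\frac{1}{h}\sum_{i\in h\Z^2}f_{i}\,g_{i-he_j}.
$$
Substituting back and factoring $f_i$ yields
$$
\sum_{i\in h\Z^2}{\mathcal{D}}_j^+ f_i\,g_i
\;=\;-\sum_{i\in h\Z^2}f_i\,\frac{g_i-g_{i-he_j}}{h}
\;=\;-\sum_{i\in h\Z^2}f_i\,{\mathcal{D}}_j^-g_i,
$$
which is~\eqref{BYPa1}. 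The second identity~\eqref{BYPa2} follows from the very same argument, expanding ${\mathcal{D}}_j^-f_i=(f_i-f_{i-he_j})/h$ and shifting $i\mapsto i+he_j$ in the piece containing $f_{i-he_j}$; alternatively, one may apply~\eqref{BYPa1} with $f$ and $g$ swapped, use the symmetry $\sum_i \varphi_i\,\psi_i=\sum_i \psi_i\,\varphi_i$ valid for finite sums, and relabel.

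There is no real obstacle here: the only point to be careful about is ensuring that all rearrangements and index translations are applied to honestly finite sums, which is precisely what assumption~\eqref{BYPa0} provides (without it, neither side of the claimed identities would in general converge).
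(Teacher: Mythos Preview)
Your proof is correct and follows essentially the same approach as the paper: expand the difference quotient, note that the finite-support hypothesis~\eqref{BYPa0} makes all sums finite so that rearrangement and index translation are legitimate, shift the index by $\pm he_j$, and recombine. The only cosmetic difference is that the paper writes out~\eqref{BYPa2} in detail and declares~\eqref{BYPa1} similar, whereas you do the reverse.
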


\begin{proof} We give the proof of~\eqref{BYPa2} in detail, the proof of~\eqref{BYPa1}
being similar. We note that the series in~\eqref{BYPa2} are finite sums, thanks to~\eqref{BYPa0}. As a result,
\begin{eqnarray*}
&&h\sum_{i\in h\Z^2} {\mathcal{D}}_j^- f_i\, g_i=
\sum_{i\in h\Z^2} (f_i-f_{i-he_j})\, g_i
=\sum_{i\in h\Z^2} f_ig_i-\sum_{k\in h\Z^2} f_{k} g_{k+he_j}\\&&\qquad
=-\sum_{i\in h\Z^2} f_i(g_{i+he_j}-g_i)
=-h\sum_{i\in h\Z^2}f_i\,{\mathcal{D}}_j^+g_i,
\end{eqnarray*}
as desired.
\end{proof}

\section{Proof of Theorem~\ref{DGDG}}\label{0.90m3S}

This section contains the proof of
Theorem~\ref{DGDG}. Some of the arguments are inspired
by the complex variable formulation introduced in~\cite{MR2014827}:
in our framework, the core of the proof is to exploit the ``continuum models''
techniques arising in partial differential equation in the ``discrete''
setting provided by the operator in~\eqref{ELLE}, with a careful estimates
of the reminders.

The following lemma provides an identity for the increments of the solution of \eqref{DGEQ} together with a quantitative estimate of the reminder term.

\begin{lemma}\label{lem:linearized equation}
It holds that
\begin{equation}\label{KS:0olsdPSPD}
\sum_{j=1}^2\Big({\mathcal{D}}_j^+(\rho^2 {\mathcal{D}}_j^+\vartheta^+)_i+
{\mathcal{D}}_j^-(\rho^2 {\mathcal{D}}_j^-\vartheta^+)_i\Big)
= \epsilon_i^{\star} ,
\end{equation}
with
\begin{equation}\label{eq:stimaerrore eps12 in enunciato lemma}
\begin{split}
|\epsilon_i^{\star}| & \le  
2e^{2\pi}h\, \sum_{j=1}^2
(\rho_i^+)^2\,
\Big( |{\mathcal{D}}_j^+ \vartheta^+_i|^3+|{\mathcal{D}}_j^- \vartheta^+_i|^3\Big)
\\& \quad +
2e^{2\pi}h\,\sum_{j=1}^2\rho_i^+
\big( |{\mathcal{D}}_j^+\rho_i^+|\;|{\mathcal{D}}_j^+ \vartheta^+_i|^2
+
|{\mathcal{D}}_j^-\rho_i^+|\;|{\mathcal{D}}_j^- \vartheta^+_i|^2
\big)+
2 \kappa_0^+ h \rho_i^+
\\& \quad +
h \sum_{j=1}^2\Big(
|{\mathcal{D}}_j^+(\rho_i^+)^2| \,|{\mathcal{D}}_j^+({\mathcal{D}}_j^+\vartheta^+)_i|+
|{\mathcal{D}}_j^-(\rho_i^+)^2| \,|{\mathcal{D}}_j^-({\mathcal{D}}_j^-\vartheta^+)_i|+
 h \,  (\rho_i^+)^2 \,  | {\mathcal{L}^2}_j\vartheta^+_{i} |\Big)
 \\&\quad +
h \sum_{j=1}^2\Big(
|{\mathcal{D}}_j^+\rho_i^+|^2\,
|{\mathcal{D}}_j^+\vartheta^+_i|
+ |{\mathcal{D}}_j^-\rho_i^+|^2\,|{\mathcal{D}}_j^-\vartheta^+_i|\Big) ,
\end{split}
\end{equation}
where $\kappa_0^+$ is that defined in \eqref{hp:GENERALsemilinearitaconspazio}.
\end{lemma}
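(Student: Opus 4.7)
The plan is to emulate the complex-variable argument of~\cite{MR2014827} for the continuous De Giorgi problem, keeping careful track of discrete Taylor remainders. Heuristically, in the continuous limit one takes the imaginary part of the identity $\Delta(\rho e^{\mathcal{I}\vartheta}) = f'(u) \rho e^{\mathcal{I}\vartheta}$, obtaining $\rho\Delta\vartheta + 2\nabla\rho\cdot\nabla\vartheta = 0$; multiplied by $\rho$ this is the divergence identity $\sum_j\partial_j(\rho^2\partial_j\vartheta) = 0$. My task is to produce a quantitative discrete version with the exact error structure stated in~\eqref{eq:stimaerrore eps12 in enunciato lemma}.

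First I apply $\mathcal{D}_j^+$ to~\eqref{DGEQ}. Since $\mathcal{L}$ commutes with $\mathcal{D}_j^+$, and since~\eqref{hp:GENERALsemilinearitaconspazio} provides a first-order Taylor expansion of $\mathcal{D}_j^+ f(i,u_i)$ with remainder $r_{i,j}$ satisfying $\sum_j |r_{i,j}| \le \kappa_0^+ h$, combining the two coordinates through~\eqref{1.7BIS} yields the discrete linearized equation
\[
\mathcal{L} U_i^+ = L_f^+(i, u_i)\, U_i^+ + \widetilde r_i, \qquad |\widetilde r_i| \le \kappa_0^+ h.
\]
Writing $U_i^+ = \rho_i^+ e^{\mathcal{I}\vartheta_i^+}$ and expanding $\mathcal{L}(\rho^+ e^{\mathcal{I}\vartheta^+})_i$ via~\eqref{PROD1}, I then Taylor-expand $e^{\mathcal{I} y}$ at $y = h\mathcal{D}_j^\pm\vartheta_i^+$; the a priori bound $|y|\le 2\pi$ coming from~\eqref{PIKS} is the source of the constant $e^{2\pi}$ in the stated estimate. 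Concretely, $e^{-\mathcal{I}\vartheta_i^+}\mathcal{L}(e^{\mathcal{I}\vartheta^+})_i = \mathcal{I}\mathcal{L}\vartheta_i^+ - \tfrac12\sum_j(|\mathcal{D}_j^+\vartheta_i^+|^2 + |\mathcal{D}_j^-\vartheta_i^+|^2) + E_i$ with $|E_i|$ of order $h\sum_j(|\mathcal{D}_j^+\vartheta_i^+|^3 + |\mathcal{D}_j^-\vartheta_i^+|^3)$, and $e^{-\mathcal{I}\vartheta_i^+}\mathcal{D}_j^\pm e^{\mathcal{I}\vartheta^+}_i = \mathcal{I}\mathcal{D}_j^\pm\vartheta_i^+ + F_{i,j}^\pm$ with $|F_{i,j}^\pm|$ of order $h(\mathcal{D}_j^\pm\vartheta_i^+)^2$ (absorbing into $F_{i,j}^\pm$ the real quadratic correction). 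Multiplying the linearized equation by $e^{-\mathcal{I}\vartheta_i^+}$ and taking imaginary parts cancels all the real quantities ($L_f^+\rho_i^+$, $\mathcal{L}\rho_i^+$, and the quadratic phase terms), leaving the discrete Farina identity
\[
\mathrm{IM}_i := \rho_i^+\mathcal{L}\vartheta_i^+ + \sum_{j=1}^2\big(\mathcal{D}_j^+\rho_i^+\,\mathcal{D}_j^+\vartheta_i^+ + \mathcal{D}_j^-\rho_i^+\,\mathcal{D}_j^-\vartheta_i^+\big) = \mathcal{E}_i^{(1)},
\]
with $|\mathcal{E}_i^{(1)}|\le \kappa_0^+ h + \rho_i^+|E_i| + \sum_j(|\mathcal{D}_j^+\rho_i^+||F_{i,j}^+| + |\mathcal{D}_j^-\rho_i^+||F_{i,j}^-|)$; multiplied by $2\rho_i^+$, these three contributions reproduce the first three blocks of~\eqref{eq:stimaerrore eps12 in enunciato lemma}.

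It then remains to identify $2\rho_i^+\mathrm{IM}_i$ with the left-hand side of~\eqref{KS:0olsdPSPD} up to acceptable errors. Expanding that left-hand side via the non-symmetric product rule $\mathcal{D}_j^+(fg)_i = f_i\mathcal{D}_j^+ g_i + g_{i+he_j}\mathcal{D}_j^+ f_i$ (and its backward analogue) together with~\eqref{ITER-01}--\eqref{ITER-02}, and invoking the elementary identities $\mathcal{L}_j\vartheta_{i+he_j}^+ + \mathcal{L}_j\vartheta_{i-he_j}^+ = 2\mathcal{L}_j\vartheta_i^+ + h^2\mathcal{L}_j^2\vartheta_i^+$ and $\mathcal{D}_j^\pm((\rho^+)^2)_i = 2\rho_i^+\mathcal{D}_j^\pm\rho_i^+ \pm h(\mathcal{D}_j^\pm\rho_i^+)^2$, I find after regrouping that all stray $O(h^2)$ cross-terms cancel exactly, producing
\[
\sum_{j=1}^2\Big[\mathcal{D}_j^+\big((\rho^+)^2\mathcal{D}_j^+\vartheta^+\big)_i + \mathcal{D}_j^-\big((\rho^+)^2\mathcal{D}_j^-\vartheta^+\big)_i\Big] = 2\rho_i^+\mathrm{IM}_i + \mathcal{E}_i^{(2)},
\]
where $\mathcal{E}_i^{(2)}$ is precisely the collection of the last two blocks of~\eqref{eq:stimaerrore eps12 in enunciato lemma}. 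Setting $\epsilon_i^\star := 2\rho_i^+\mathcal{E}_i^{(1)} + \mathcal{E}_i^{(2)}$ then delivers~\eqref{KS:0olsdPSPD} with the claimed bound.

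I expect the main obstacle to be this last bookkeeping step. Producing the divergence identity hinges on the right choice of the (non-symmetric) product rule and on the exact cancellation of several $h^2$ contributions, and at the same time the Taylor expansion of $e^{\mathcal{I}\vartheta^+}$ must be kept at precisely the correct order in each occurrence (linear with quadratic remainder for the first differences $\mathcal{D}_j^\pm$, quadratic with cubic remainder for the discrete Laplacian $\mathcal{L}$), so that the final error reproduces each of the five blocks of~\eqref{eq:stimaerrore eps12 in enunciato lemma} term by term rather than collapsing several of them into a cruder common bound.
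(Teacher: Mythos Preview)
Your proposal is correct and follows essentially the same route as the paper: linearize~\eqref{DGEQ} to obtain $\mathcal{L}U_i^+=L_f^+(i,u_i)U_i^+ +$ (small), expand $\mathcal{L}(\rho^+e^{\mathcal{I}\vartheta^+})$ via the product rule and Taylor expansions of $e^{\mathcal{I}y}$ (first order with quadratic remainder for $\mathcal{D}_j^\pm$, second order with cubic remainder for $\mathcal{L}$), take the imaginary part to get $\rho_i^+\mathcal{L}\vartheta_i^+ + \sum_j(\mathcal{D}_j^+\rho_i^+\mathcal{D}_j^+\vartheta_i^+ + \mathcal{D}_j^-\rho_i^+\mathcal{D}_j^-\vartheta_i^+)=\mathcal{E}_i^{(1)}$, and then reconstruct the divergence form by the identities $\mathcal{L}_j\vartheta_{i+he_j}^++\mathcal{L}_j\vartheta_{i-he_j}^+=2\mathcal{L}_j\vartheta_i^++h^2\mathcal{L}_j^2\vartheta_i^+$ and $\mathcal{D}_j^\pm((\rho^+)^2)_i=2\rho_i^+\mathcal{D}_j^\pm\rho_i^+\pm h(\mathcal{D}_j^\pm\rho_i^+)^2$. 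The only cosmetic difference is that the paper uses the symmetric product rule~\eqref{PROFGRAD-1}--\eqref{PROFGRAD-2} for $\mathcal{D}_j^\pm((\rho^+)^2\mathcal{D}_j^\pm\vartheta^+)_i$ rather than your non-symmetric one, but the resulting remainder terms are identical (e.g.\ the paper's $\epsilon_i^{(8,+,j)}=(\rho^2_{i+he_j}-(\rho_i^+)^2)\mathcal{D}_j^+(\mathcal{D}_j^+\vartheta^+)_i$ is exactly your $h\,\mathcal{D}_j^+((\rho^+)^2)_i\,\mathcal{D}_j^+(\mathcal{D}_j^+\vartheta^+)_i$); one minor inaccuracy of phrasing is that in the last step nothing ``cancels exactly''---the $O(h)$ and $O(h^2)$ terms are simply collected, not cancelled, and they constitute precisely the third and fourth blocks of~\eqref{eq:stimaerrore eps12 in enunciato lemma}.
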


We remark that Lemma~\ref{lem:linearized equation}
is an approximate counterpart in the discrete setting of Lemma~2.2
in~\cite{MR2014827}. In particular, formula~(2.9)
in~\cite{MR2014827} is the continuous counterpart of~\eqref{KS:0olsdPSPD}
here. Notice that in the continuous case the remainder~$\epsilon_i^{\star}$
is replaced simply by zero.

\begin{proof}[Proof of Lemma~\ref{lem:linearized equation}]
For~$k\in\{1,2\}$, we use~\eqref{DGEQ} to write that
\begin{eqnarray*}
f( i+h e_k, u_{i+he_k}) - f(i, u_i)&=&
{\mathcal{L}} u_{i+he_k}-{\mathcal{L}} u_i\\&=&\frac1{h^2}
\sum_{j=1}^2 \big( u_{i+he_k+he_j} + u_{i+he_k-he_j} -2u_{i+he_k}
- u_{i+he_j} - u_{i-he_j} +2u_i\big)\\&=&\frac1{h^2}
\sum_{j=1}^2 \Big( \big(u_{i+he_k+he_j}
- u_{i+he_j}\big) + \big(u_{i+he_k-he_j} - u_{i-he_j}\big)-2\big(
u_{i+he_k} -u_i\big)\Big)
\\&=&\frac1{h}
\sum_{j=1}^2 \Big(
{\mathcal{D}}_k^+ u_{i+he_j} 
+{\mathcal{D}}_k^+ u_{i-he_j}
-2{\mathcal{D}}_k^+ u_{i}\Big)
\\&=& h\,{\mathcal{L}}({\mathcal{D}}_k^+ u)_{i},
\end{eqnarray*}
and consequently, recalling the definition
of~${U_i^+}$ in~\eqref{1.7BIS},
$$ \frac{\big(f( i+h e_1, u_{i+he_1})-f( i, u_i)\big)+{\mathcal{I}}
\big(f( i+he_2, u_{i+he_2})-f(i, u_i)\big)}h=
{\mathcal{L}}({\mathcal{D}}^+_1u)_i+
{\mathcal{I}}{\mathcal{L}}({\mathcal{D}}^+_2u)_i
=
{\mathcal{L}}{U_i^+}.$$
Therefore, setting\footnote{In the following pages, we will have to estimate several
remainders that will be denoted by~$\epsilon_i^{(1)}, \cdots, \epsilon_i^{(14)}$. Each of
these remainders does not possess a particular meaning in itself and requires a specific
estimate in order to be controlled by quantities depending on the mesh parameter~$h$.}
\begin{multline*}
\epsilon_i^{(1)}:=
\\
\frac{
\big(f( i+he_1, u_{i+he_1})-f( i, u_i)\big)+{\mathcal{I}}
\big(f( i+he_2, u_{i+he_2})-f(i, u_i)\big)- L_f^+ (i, u_i)\big(
(u_{i+he_1}-u_i)+{\mathcal{I}}(u_{i+he_2}-u_i)\big)}h ,
\end{multline*}
we find that
\begin{equation}\label{in56p4rf}
{\mathcal{L}}{U_i^+}= L_f^+ (i, u_i)\,{U_i^+}+\epsilon_i^{(1)} ,
\end{equation}
and, by \eqref{hp:GENERALsemilinearitaconspazio},
\begin{equation}\label{eq:nuovastimadieps1}
|\epsilon_i^{(1)}| \le \kappa_0^+ h.
\end{equation}
We also note that
\begin{equation}\label{eq:staraggiunta1}
{\mathcal{D}}_j^+ e^{{\mathcal{I}}\vartheta^+_i}= \frac{
e^{{\mathcal{I}}\vartheta^+_{i+he_j}}-e^{{\mathcal{I}}\vartheta^+_i}}h
= \frac{{\mathcal{I}}e^{{\mathcal{I}}\vartheta^+_i}(\vartheta^+_{i+he_j}-\vartheta^+_{i})}h+
\epsilon^{(2,+,j)}_i ,
\end{equation}
where
$$ \epsilon^{(2,+,j)}_i:=\frac{e^{{\mathcal{I}}\vartheta^+_{i+he_j}}-e^{{\mathcal{I}}\vartheta^+_i}-
{\mathcal{I}}e^{{\mathcal{I}}\vartheta^+_i}(\vartheta^+_{i+he_j}-\vartheta^+_{i})}h.$$
Similarly,
\begin{eqnarray}\label{eq:staraggiunta2}
{\mathcal{D}}_j^- e^{{\mathcal{I}}\vartheta^+_i}&=&\frac{
{\mathcal{I}}e^{{\mathcal{I}}\vartheta^+_i}(\vartheta^+_{i}-\vartheta^+_{i-he_j})}{h}+
\epsilon^{(2,-,j)}_i
,\end{eqnarray}
where
$$ \epsilon^{(2,-,j)}_i:=\frac{e^{{\mathcal{I}}\vartheta^+_i}-
e^{{\mathcal{I}}\vartheta^+_{i-he_j}}
-
{\mathcal{I}}e^{{\mathcal{I}}\vartheta^+_i}(\vartheta^+_{i}-\vartheta^+_{i-he_j})}h.$$
We remark that,
for every~$t\in(-2\pi,2\pi)$,
$$ \left|e^{{\mathcal{I}} t}-1-{\mathcal{I}}t\right|=\left|\sum_{\ell=2}^{+\infty}\frac{({\mathcal{I}}t)^\ell}{\ell!}\right|\le
\sum_{\ell=2}^{+\infty}\frac{|t|^\ell}{\ell!}=t^2
\sum_{m=0}^{+\infty}\frac{|t|^{m}}{(m+2)!}
\le t^2
\sum_{m=0}^{+\infty}\frac{(2\pi)^{m}}{m!}=e^{2\pi}t^2,
$$
and therefore, by~\eqref{PIKS},
\begin{equation}\label{2.4}
\begin{split}
|\epsilon^{(2,+,j)}_i|\,&=\left|\frac{e^{{\mathcal{I}}\vartheta^+_i}}h\,
\Big(e^{{\mathcal{I}}(\vartheta^+_{i+he_j}-\vartheta^+_i)}-1-
{\mathcal{I}}(\vartheta^+_{i+he_j}-\vartheta^+_{i})
\Big)\right|
\\&=\frac{1}{h}\,
\Big|e^{{\mathcal{I}}(\vartheta^+_{i+he_j}-\vartheta^+_i)}-1-
{\mathcal{I}}(\vartheta^+_{i+he_j}-\vartheta^+_{i})
\Big|\\&\le\frac{e^{2\pi}\,|\vartheta^+_{i+he_j}-\vartheta^+_{i}|^2}{h}
=e^{2\pi}h\, |{\mathcal{D}}_j^+ \vartheta^+_i|^2
\end{split}\end{equation}
and analogously
\begin{equation}\label{2.5}
|\epsilon^{(2,-,j)}_i|\le e^{2\pi}h\, |{\mathcal{D}}_j^- \vartheta^+_i|^2.
\end{equation}
Furthermore,
\begin{equation}\label{INStere74}
\begin{split}
{\mathcal{L}}e^{{\mathcal{I}}\vartheta^+_i}\,=&\frac1{h^2}
\sum_{j=1}^2\big(
e^{{\mathcal{I}}\vartheta^+_{i+he_j}}+e^{{\mathcal{I}}\vartheta^+_{i-he_j}}
-2e^{{\mathcal{I}}\vartheta^+_i}\big)
\\=&
\frac{e^{{\mathcal{I}}\vartheta^+_{i}}}{h^2}
\sum_{j=1}^2\big(
e^{{\mathcal{I}}(\vartheta^+_{i+he_j}-\vartheta^+_i)}+e^{{\mathcal{I}}(\vartheta^+_{i-he_j}-\vartheta^+_i)}
-2\big)
\\=&
\frac{{\mathcal{I}}\,e^{{\mathcal{I}}\vartheta^+_{i}}}{h^2}
\sum_{j=1}^2\big(
\vartheta^+_{i+he_j}+\vartheta^+_{i-he_j}-2\vartheta^+_i\big)
-
\frac{e^{{\mathcal{I}}\vartheta^+_{i}}}{2}
\sum_{j=1}^2\big(|{\mathcal{D}}_j^+\vartheta^+_i|^2+|{\mathcal{D}}_j^-\vartheta^+_i|^2\big)
+\epsilon^{(2)}_i\\=&
{\mathcal{I}}\,e^{{\mathcal{I}}\vartheta^+_{i}}{\mathcal{L}}\vartheta^+_i
-
\frac{e^{{\mathcal{I}}\vartheta^+_{i}}}{2}
\sum_{j=1}^2\big(|{\mathcal{D}}_j^+\vartheta^+_i|^2+|{\mathcal{D}}_j^-\vartheta^+_i|^2\big)
+\epsilon^{(2)}_i,
\end{split}\end{equation}
where
\begin{eqnarray*}
&&\epsilon^{(2)}_i:=\frac{e^{{\mathcal{I}}\vartheta^+_{i}}}{h^2}
\sum_{j=1}^2\Bigg(
\big(
e^{{\mathcal{I}}(\vartheta^+_{i+he_j}-\vartheta^+_i)}+e^{{\mathcal{I}}(\vartheta^+_{i-he_j}-\vartheta^+_i)}
-2\big)-{\mathcal{I}}\big(
\vartheta^+_{i+he_j}+\vartheta^+_{i-he_j}-2\vartheta^+_i\big)\\&&\qquad\qquad\qquad+\frac{h^2}2\big(|{\mathcal{D}}_j^+\vartheta^+_i|^2+|{\mathcal{D}}_j^-\vartheta^+_i|^2\big)\Bigg).\end{eqnarray*}
Since, for every~$t\in(-2\pi,2\pi)$,
$$ \left|e^{{\mathcal{I}} t}-1-{\mathcal{I}}t+\frac{t^2}{2}\right|=\left|\sum_{\ell=3}^{+\infty}\frac{({\mathcal{I}}t)^\ell}{\ell!}\right|\le
\sum_{\ell=3}^{+\infty}\frac{|t|^\ell}{\ell!}=|t|^3
\sum_{m=0}^{+\infty}\frac{|t|^{m}}{(m+3)!}
\le |t|^3
\sum_{m=0}^{+\infty}\frac{(2\pi)^{m}}{m!}=e^{2\pi}|t|^3,
$$
we deduce from~\eqref{PIKS} that
\begin{equation}\label{2.7}
\begin{split}
|\epsilon^{(2)}_i|\,&=\frac{1}{h^2}
\Bigg|
\sum_{j=1}^2\Bigg(
\big(
e^{{\mathcal{I}}(\vartheta^+_{i+he_j}-\vartheta^+_i)}+e^{{\mathcal{I}}(\vartheta^+_{i-he_j}-\vartheta^+_i)}
-2\big)-{\mathcal{I}}\big(
(\vartheta^+_{i+he_j}-\vartheta^+_i)+(\vartheta^+_{i-he_j}-\vartheta^+_i)\big)\\&\qquad\quad+\frac12\big(|\vartheta^+_{i+he_j}-\vartheta^+_i|^2+|\vartheta^+_{i-he_j}-\vartheta^+_i|^2\big)\Bigg)
\Bigg|\\&\le
\frac{1}{h^2}
\sum_{j=1}^2\Bigg(
\Bigg|
e^{{\mathcal{I}}(\vartheta^+_{i+he_j}-\vartheta^+_i)}-1
-{\mathcal{I}}
(\vartheta^+_{i+he_j}-\vartheta^+_i)
+\frac{|\vartheta^+_{i+he_j}-\vartheta^+_i|^2}2
\Bigg|\\&\qquad\quad+\Bigg|
e^{{\mathcal{I}}(\vartheta^+_{i-he_j}-\vartheta^+_i)}-1
-{\mathcal{I}}
(\vartheta^+_{i-he_j}-\vartheta^+_i)
+\frac{|\vartheta^+_{i-he_j}-\vartheta^+_i|^2}2
\Bigg|\Bigg)\\&\le\frac{e^{2\pi}}{h^2}\sum_{j=1}^2\Big(|\vartheta^+_{i+he_j}-\vartheta^+_i|^3+
|\vartheta^+_{i-he_j}-\vartheta^+_i|^3\Big)\\&=
e^{2\pi}h\, \sum_{j=1}^2\Big( |{\mathcal{D}}_j^+ \vartheta^+_i|^3+|{\mathcal{D}}_j^- \vartheta^+_i|^3\Big)
.
\end{split}
\end{equation}
Then, from~\eqref{PROD1}, \eqref{eq:staraggiunta1}, \eqref{eq:staraggiunta2}, and~\eqref{INStere74},
\begin{eqnarray*}{\mathcal{L}}{U_i^+}&=&
{\mathcal{L}}(\rho
e^{{\mathcal{I}}\vartheta^+})_i\\&=&
{\mathcal{L}} \rho_i^+\,e^{{\mathcal{I}}\vartheta^+_i}+
{\mathcal{L}} e^{{\mathcal{I}}\vartheta^+_i}\,\rho_i^+ +
\sum_{j=1}^2
\big( {\mathcal{D}}_j^+\rho_i^+\,{\mathcal{D}}_j^+e^{{\mathcal{I}}\vartheta^+_i}+
{\mathcal{D}}_j^-\rho_i^+\,{\mathcal{D}}_j^-e^{{\mathcal{I}}\vartheta^+_i} \big)\\&=&
{\mathcal{L}} \rho_i^+\,e^{{\mathcal{I}}\vartheta^+_i}+
\rho_i^+\left( {\mathcal{I}}e^{{\mathcal{I}}\vartheta^+_i}\,
{\mathcal{L}}\vartheta^+_i
-\frac{e^{{\mathcal{I}}\vartheta^+_{i}}}{2}
\sum_{j=1}^2\big(|{\mathcal{D}}_j^+\vartheta^+_i|^2+
|{\mathcal{D}}_j^-\vartheta^+_i|^2\big)+\epsilon^{(2)}_i\right)\\&&\quad+
\sum_{j=1}^2
\Bigg( {\mathcal{D}}_j^+\rho_i^+\left(
{\mathcal{I}}e^{{\mathcal{I}}\vartheta^+_i}\;\frac{\vartheta^+_{i+he_j}-\vartheta^+_{i}}h+
\epsilon^{(2,+,j)}_i
\right)+
{\mathcal{D}}_j^-\rho_i^+\left(
{\mathcal{I}}e^{{\mathcal{I}}\vartheta^+_i}\;\frac{\vartheta^+_{i}-\vartheta^+_{i-he_j}}h+
\epsilon^{(2,-,j)}_i\right)\Bigg)\\&=&
{\mathcal{L}} \rho_i^+\,e^{{\mathcal{I}}\vartheta^+_i}+
{\mathcal{I}}\rho_i^+\, e^{{\mathcal{I}}\vartheta^+_i}\,{\mathcal{L}}\vartheta^+_i-\frac{\rho_i^+ \,e^{{\mathcal{I}}\vartheta^+_{i}}}{2}
\sum_{j=1}^2\big(|{\mathcal{D}}_j^+\vartheta^+_i|^2+|{\mathcal{D}}_j^-\vartheta^+_i|^2\big)\\&&\quad+
\sum_{j=1}^2
\left( {\mathcal{D}}_j^+\rho_i^+\;
{\mathcal{I}}e^{{\mathcal{I}}\vartheta^+_i}\;\frac{\vartheta^+_{i+he_j}-\vartheta^+_{i}}h
+
{\mathcal{D}}_j^-\rho_i^+\;
{\mathcal{I}}e^{{\mathcal{I}}\vartheta^+_i}\;\frac{\vartheta^+_{i}-\vartheta^+_{i-he_j}}h\right)+\epsilon^{(3)}_i\\
&=&
e^{{\mathcal{I}}\vartheta^+_i}\Bigg[
{\mathcal{L}} \rho_i^++{\mathcal{I}}
\rho_i^+  \,{\mathcal{L}}\vartheta^+_i-\frac{\rho_i^+ }{2}
\sum_{j=1}^2\big(|{\mathcal{D}}_j^+\vartheta^+_i|^2+|{\mathcal{D}}_j^-\vartheta^+_i|^2\big)\\&&\quad+
{\mathcal{I}}
\sum_{j=1}^2
\Big( {\mathcal{D}}_j^+\rho_i^+\;
{\mathcal{D}}^+_j \vartheta^+_{i}
+
{\mathcal{D}}_j^-\rho_i^+\;{\mathcal{D}}^-_j\vartheta^+_{i}\Big)\Bigg]
+\epsilon^{(3)}_i\\
&=&
e^{{\mathcal{I}}\vartheta^+_i}\Bigg[
{\mathcal{L}} \rho_i^++{\mathcal{I}}
\rho_i^+  \,{\mathcal{L}}\vartheta^+_i
-\frac{\rho_i^+ }{2}
\sum_{j=1}^2\big(|{\mathcal{D}}_j^+\vartheta^+_i|^2+|{\mathcal{D}}_j^-\vartheta^+_i|^2\big)\\&&\quad+ 
{\mathcal{I}}
\sum_{j=1}^2
\Big( {\mathcal{D}}_j^+\rho_i^+\;
{\mathcal{D}}_j^+\vartheta^+_{i}
+
{\mathcal{D}}_j^-\rho_i^+\;{\mathcal{D}}_j^-\vartheta^+_{i}\Big)+\epsilon^{(4)}_i\Bigg].
\end{eqnarray*}
where
$$\epsilon^{(3)}_i:=\rho_i^+ \epsilon^{(2)}_i+
\sum_{j=1}^2
\big( {\mathcal{D}}_j^+\rho_i^+\;
\epsilon^{(2,+,j)}_i
+
{\mathcal{D}}_j^-\rho_i^+\;
\epsilon^{(2,-,j)}_i\big)
$$
and
$$\epsilon^{(4)}_i:=
e^{-{\mathcal{I}}\vartheta^+_i}\,\epsilon^{(3)}_i.$$
Hence, recalling~\eqref{in56p4rf} and letting
$$ \epsilon^{(5)}_i:=
e^{-{\mathcal{I}}\vartheta^+_i}\,\epsilon^{(1)}_i,$$
we conclude that
\begin{equation}\label{BEF}
\begin{split}&
e^{{\mathcal{I}}\vartheta^+_i}\Big( L_f^+ (i, u_i)\,\rho_i^++
\epsilon_i^{(5)}\Big)\\=\;&
L_f^+ (i, u_i)\,{U_i^+}+\epsilon_i^{(1)}
\\ =\;&{\mathcal{L}}{U_i^+}\\
=\;&
e^{{\mathcal{I}}\vartheta^+_i}\Bigg[
{\mathcal{L}} \rho_i^++{\mathcal{I}}
\rho_i^+  \,{\mathcal{L}}\vartheta^+_i-\frac{\rho_i^+ }{2}
\sum_{j=1}^2\big(|{\mathcal{D}}_j^+\vartheta^+_i|^2+|{\mathcal{D}}_j^-\vartheta^+_i|^2\big)+{\mathcal{I}}
\sum_{j=1}^2
\Big( {\mathcal{D}}_j^+\rho_i^+\;
{\mathcal{D}}_j^+\vartheta^+_{i}
+
{\mathcal{D}}_j^-\rho_i^+\;{\mathcal{D}}_j^-\vartheta^+_{i}\Big)+\epsilon^{(4)}_i\Bigg].
\end{split}\end{equation}
We point out that
\begin{equation}\label{K29j29}
\begin{split}
|\epsilon^{(4)}_i|+|\epsilon^{(5)}_i|\,&=|\epsilon^{(3)}_i|+|
\epsilon^{(1)}_i|
\\&
\le|
\rho_i^+ |\,|\epsilon^{(2)}_i|+
\sum_{j=1}^2
\big( |{\mathcal{D}}_j^+\rho_i^+|\;
|\epsilon^{(2,+,j)}_i|
+
|{\mathcal{D}}_j^-\rho_i^+|\;|
\epsilon^{(2,-,j)}_i|\big)+|\epsilon^{(1)}_i|\\&\le
e^{2\pi}h\, \sum_{j=1}^2
|\rho_i^+ |\,
\Big( |{\mathcal{D}}_j^+ \vartheta^+_i|^3+|{\mathcal{D}}_j^- \vartheta^+_i|^3\Big)
+
e^{2\pi}h\,\sum_{j=1}^2
\big( |{\mathcal{D}}_j^+\rho_i^+|\;|{\mathcal{D}}_j^+ \vartheta^+_i|^2
+
|{\mathcal{D}}_j^-\rho_i^+|\;|{\mathcal{D}}_j^- \vartheta^+_i|^2
\big)
+ \kappa_0^+ h
%
%
%
%
\end{split}\end{equation}
where we have also exploited~\eqref{eq:nuovastimadieps1},
\eqref{2.4}, \eqref{2.5} and~\eqref{2.7}.

After simplifying the term~$e^{{\mathcal{I}}\vartheta^+_i}$
in~\eqref{BEF}, and setting
$$ \epsilon_i^{(6)}:=\epsilon_i^{(5)}-\epsilon_i^{(4)},$$
we discover that
\begin{equation}\label{0-02erkgarPRE} L_f^+ (i, u_i)\,\rho_i^++
\epsilon_i^{(6)}\,=\,
{\mathcal{L}} \rho_i^++{\mathcal{I}}
\rho_i^+  \,{\mathcal{L}}\vartheta^+_i-\frac{\rho_i^+ }{2}\sum_{j=1}^2\big(|{\mathcal{D}}_j^+\vartheta^+_i|^2+|{\mathcal{D}}_j^-\vartheta^+_i|^2\big)+{\mathcal{I}}
\sum_{j=1}^2
\Big( {\mathcal{D}}_j^+\rho_i^+\;
{\mathcal{D}}_j^+\vartheta^+_{i}
+
{\mathcal{D}}_j^-\rho_i^+\;{\mathcal{D}}_j^-\vartheta^+_{i}\Big).\end{equation}
Therefore, denoting by~$\epsilon_i^{(7)}$ the imaginary part of~$\epsilon_i^{(6)}$, 
by taking the imaginary part of equation~\eqref{0-02erkgarPRE}
we find that
\begin{equation}\label{0-02erkgar}
\epsilon_i^{(7)}\,=\,\rho_i^+\,
{\mathcal{L}}\vartheta^+_i+\sum_{j=1}^2
\Big( {\mathcal{D}}_j^+\rho_i^+\;
{\mathcal{D}}_j^+\vartheta^+_{i}
+
{\mathcal{D}}_j^-\rho_i^+\;{\mathcal{D}}_j^-\vartheta^+_{i}\Big).
\end{equation}
In addition, recalling~\eqref{K29j29},
\begin{equation}\label{spearr0}\begin{split}&
|\epsilon_i^{(7)}|\le|\epsilon_i^{(6)}|=|\epsilon_i^{(4)}-\epsilon_i^{(5)}|\le |\epsilon_i^{(4)}|+|\epsilon_i^{(5)}|\\
&\qquad\leq 
e^{2\pi}h\, \sum_{j=1}^2
|\rho_i^+ |\,
\Big( |{\mathcal{D}}_j^+ \vartheta^+_i|^3+|{\mathcal{D}}_j^- \vartheta^+_i|^3\Big)
+
e^{2\pi}h\,\sum_{j=1}^2
\big( |{\mathcal{D}}_j^+\rho_i^+|\;|{\mathcal{D}}_j^+ \vartheta^+_i|^2
+
|{\mathcal{D}}_j^-\rho_i^+|\;|{\mathcal{D}}_j^- \vartheta^+_i|^2
\big)
+ \kappa_0^+ h.\end{split}
\end{equation}
Now, using~\eqref{PROFGRAD-2}, we see that
\begin{equation}\begin{split}\label{TH6} {\mathcal{D}}_j^-(\rho^2 {\mathcal{D}}_j^-\vartheta^+)_i\,&=
\frac{\rho^2_{i-he_j}+(\rho_i^+)^2}2\;{\mathcal{D}}_j^-({\mathcal{D}}_j^-\vartheta^+)_i+
\frac{{\mathcal{D}}_j^-\vartheta^+_{i-he_j}+{\mathcal{D}}_j^-\vartheta^+_i}2
\;{\mathcal{D}}_j^-(\rho_i^+)^2
\\&=
(\rho_i^+)^2\;{\mathcal{D}}_j^-({\mathcal{D}}_j^-\vartheta^+)_i+
{{\mathcal{D}}_j^-\vartheta^+_i}
\;{\mathcal{D}}_j^-(\rho_i^+)^2+\epsilon_i^{(8,-,j)}
,\end{split}\end{equation}
where
\begin{eqnarray*}\epsilon_i^{(8,-,j)}&:=&
\frac{\rho^2_{i-he_j}-(\rho_i^+)^2}2\;{\mathcal{D}}_j^-({\mathcal{D}}_j^-\vartheta^+)_i+
\frac{{\mathcal{D}}_j^-\vartheta^+_{i-he_j}-{\mathcal{D}}_j^-\vartheta^+_i}2
\;{\mathcal{D}}_j^-(\rho_i^+)^2
\\ &=&\Big((\rho^2_{i-he_j}-(\rho_i^+)^2)+h\,
{\mathcal{D}}_j^-(\rho_i^+)^2\Big)\,\frac{{\mathcal{D}}_j^-({\mathcal{D}}_j^-\vartheta^+)_i}2
\\&=&(\rho^2_{i-he_j}-(\rho_i^+)^2)\,{\mathcal{D}}_j^-({\mathcal{D}}_j^-\vartheta^+)_i.
\end{eqnarray*}
{F}rom~\eqref{ITER-02} and~\eqref{TH6} we deduce that
\begin{equation}\label{ILAN-3} \sum_{j=1}^2{\mathcal{D}}_j^-(\rho^2 {\mathcal{D}}_j^-\vartheta^+)_i=\sum_{j=1}^2
(\rho_i^+)^2\;{\mathcal{L}}_j\vartheta^+_{i-he_j}+\sum_{j=1}^2
{\mathcal{D}}_j^-\vartheta^+_i
\;{\mathcal{D}}_j^-(\rho_i^+)^2+\epsilon_i^{(8,-)},\end{equation}
with
$$ \epsilon_i^{(8,-)}:=\sum_{j=1}^2\epsilon_i^{(8,-,j)}.$$
Similarly, setting
$$\epsilon_i^{(8,+,j)}:=
\frac{\rho^2_{i+he_j}-(\rho_i^+)^2}2\;{\mathcal{D}}_j^+({\mathcal{D}}_j^+\vartheta^+)_i+
\frac{{\mathcal{D}}_j^+\vartheta^+_{i+he_j}-{\mathcal{D}}_j^+\vartheta^+_i}2
\;{\mathcal{D}}_j^+(\rho_i^+)^2=
(\rho^2_{i+he_j}-(\rho_i^+)^2){\mathcal{D}}_j^+({\mathcal{D}}_j^+\vartheta^+)_i
$$
and
$$ \epsilon_i^{(8,+)}:=\sum_{j=1}^2\epsilon_i^{(8,+,j)},$$
we see that
\begin{equation}\label{ILAN-4} \sum_{j=1}^2{\mathcal{D}}_j^+(\rho^2 {\mathcal{D}}_j^+\vartheta^+)_i=\sum_{j=1}^2
(\rho_i^+)^2\;{\mathcal{L}}_j\vartheta^+_{i+he_j}+\sum_{j=1}^2
{\mathcal{D}}_j^+\vartheta^+_i
\;{\mathcal{D}}_j^+(\rho_i^+)^2+\epsilon_i^{(8,+)}.\end{equation}
Accordingly, if we define
$$ \epsilon_i^{(9)}:=
\epsilon_i^{(8,+)}+\epsilon_i^{(8,-)}+\sum_{j=1}^2
(\rho_i^+)^2\;\big({\mathcal{L}}_j\vartheta^+_{i+he_j}
+{\mathcal{L}}_j\vartheta^+_{i-he_j}-2{\mathcal{L}}_j\vartheta^+_{i}\big),$$
we have that
\begin{equation}\label{EPS9}
\begin{split}
|\epsilon_i^{(9)}|\,&= \left|
\sum_{j=1}^2\epsilon_i^{(8,+,j)}+\sum_{j=1}^2\epsilon_i^{(8,-,j)}+\sum_{j=1}^2
(\rho_i^+)^2\;\big({\mathcal{L}}_j\vartheta^+_{i+he_j}
+{\mathcal{L}}_j\vartheta^+_{i-he_j}-2{\mathcal{L}}_j\vartheta^+_{i}\big)\right|\\
&=
\Bigg|
\sum_{j=1}^2\Big(
(\rho^2_{i+he_j}-(\rho_i^+)^2)\,{\mathcal{D}}_j^+({\mathcal{D}}_j^+\vartheta^+)_i+
(\rho^2_{i-he_j}-(\rho_i^+)^2)\,{\mathcal{D}}_j^-({\mathcal{D}}_j^-\vartheta^+)_i\Big)\\&\qquad\qquad+\sum_{j=1}^2
(\rho_i^+)^2\;\big({\mathcal{L}}_j\vartheta^+_{i+he_j}
+{\mathcal{L}}_j\vartheta^+_{i-he_j}-2{\mathcal{L}}_j\vartheta^+_{i}\big)
\Bigg|
\\
&\le
\sum_{j=1}^2\Big(
|\rho^2_{i+he_j}-(\rho_i^+)^2|\,|{\mathcal{D}}_j^+({\mathcal{D}}_j^+\vartheta^+)_i|+
|\rho^2_{i-he_j}-(\rho_i^+)^2|\,|{\mathcal{D}}_j^-({\mathcal{D}}_j^-\vartheta^+)_i|\\&\qquad\qquad+
(\rho_i^+)^2\,|{\mathcal{L}}_j\vartheta^+_{i+he_j}
+{\mathcal{L}}_j\vartheta^+_{i-he_j}-2{\mathcal{L}}_j\vartheta^+_{i}|\Big).
\end{split}
\end{equation}
Also,
from~\eqref{ILAN-3} and~\eqref{ILAN-4}, we conclude that
\begin{equation}\label{NMDAME0} \begin{split}&
\sum_{j=1}^2\Big({\mathcal{D}}_j^+(\rho^2 {\mathcal{D}}_j^+\vartheta^+)_i+
{\mathcal{D}}_j^-(\rho^2 {\mathcal{D}}_j^-\vartheta^+)_i\Big)\,\\ =\,&
2(\rho_i^+)^2\;\sum_{j=1}^2{\mathcal{L}}_j\vartheta^+_{i}+\sum_{j=1}^2\Big(
{\mathcal{D}}_j^+\vartheta^+_i
\;{\mathcal{D}}_j^+(\rho_i^+)^2
+{\mathcal{D}}_j^-\vartheta^+_i
\;{\mathcal{D}}_j^-(\rho_i^+)^2
\Big)
+\epsilon_i^{(9)}
\\=\,&
2(\rho_i^+)^2\;{\mathcal{L}}\vartheta^+_{i}+\sum_{j=1}^2\Big(
{\mathcal{D}}_j^+\vartheta^+_i
\;{\mathcal{D}}_j^+(\rho_i^+)^2
+{\mathcal{D}}_j^-\vartheta^+_i
\;{\mathcal{D}}_j^-(\rho_i^+)^2
\Big)
+\epsilon_i^{(9)},\end{split}\end{equation}
where we used~\eqref{2.2BIS}.

Now, in light of~\eqref{PROFGRAD-2} we note that
\begin{equation}\label{NMDAME1} {\mathcal{D}}_j^-(\rho_i^+)^2=
(\rho^+_{i-he_j}+\rho_i^+)\;{\mathcal{D}}_j^-\rho_i^+=2\rho_i^+\;{\mathcal{D}}_j^-\rho_i^++\epsilon_i^{(10,-,j)},
\end{equation}
where
\begin{equation}\label{3.22PRE} \epsilon_i^{(10,-,j)}:=(\rho^+_{i-he_j}-\rho_i^+)\;{\mathcal{D}}_j^-\rho_i^+.\end{equation}
In the same way, we see that
\begin{equation}\label{NMDAME2} {\mathcal{D}}_j^+(\rho_i^+)^2=2\rho_i^+\;{\mathcal{D}}_j^+\rho_i^++\epsilon_i^{(10,+,j)},
\end{equation}
where
\begin{equation}\label{3.22BIS} \epsilon_i^{(10,+,j)}:=(\rho^+_{i+he_j}-\rho_i^+)\;{\mathcal{D}}_j^+\rho_i^+.\end{equation}
By inserting~\eqref{NMDAME1} and~\eqref{NMDAME2} into~\eqref{NMDAME0}, we thereby
find that
\begin{eqnarray*}&&
\sum_{j=1}^2\Big({\mathcal{D}}_j^+(\rho^2 {\mathcal{D}}_j^+\vartheta^+)_i+
{\mathcal{D}}_j^-(\rho^2 {\mathcal{D}}_j^-\vartheta^+)_i\Big)\\&=&
2(\rho_i^+)^2\;{\mathcal{L}}\vartheta^+_{i}+\sum_{j=1}^2\Big(
{\mathcal{D}}_j^+\vartheta^+_i
\;\big(2\rho_i^+\;{\mathcal{D}}_j^+\rho_i^++\epsilon_i^{(10,+,j)}\big)
+{\mathcal{D}}_j^-\vartheta^+_i
\;\big(2\rho_i^+\;{\mathcal{D}}_j^-\rho_i^++\epsilon_i^{(10,-,j)}\big)
\Big)
+\epsilon_i^{(9)}
\\&=&2\rho_i^+
\Bigg[
\rho_i^+\;{\mathcal{L}}\vartheta^+_{i}+\sum_{j=1}^2\Big(
{\mathcal{D}}_j^+\vartheta^+_i
\; {\mathcal{D}}_j^+\rho_i^+
+{\mathcal{D}}_j^-\vartheta^+_i
\;{\mathcal{D}}_j^-\rho_i^+
\Big)
\Bigg]+\epsilon_i^{(11)}
\end{eqnarray*}
where
\begin{equation}\label{spearr} \epsilon_i^{(11)}:=\epsilon_i^{(9)}+
\sum_{j=1}^2\Big(
{\mathcal{D}}_j^+\vartheta^+_i\;\epsilon_i^{(10,+,j)}+
{\mathcal{D}}_j^-\vartheta^+_i\;\epsilon_i^{(10,-,j)}
\Big).\end{equation}
Gathering this information with~\eqref{0-02erkgar},
we conclude that \eqref{KS:0olsdPSPD} holds true with
$$
\epsilon_i^{\star} := 2\rho_i^+\,\epsilon_i^{(7)}+\epsilon_i^{(11)}.
$$

We remark that
\begin{equation*}
\begin{split}
|\epsilon_i^{\star}|\,&\le2\rho_i^+\,|\epsilon_i^{(7)}|+|\epsilon_i^{(11)}|
\\&\le
2e^{2\pi}h\, \sum_{j=1}^2
(\rho_i^+)^2\,
\Big( |{\mathcal{D}}_j^+ \vartheta^+_i|^3+|{\mathcal{D}}_j^- \vartheta^+_i|^3\Big)
\\&\qquad\qquad+
2e^{2\pi}h\,\sum_{j=1}^2\rho_i^+
\big( |{\mathcal{D}}_j^+\rho_i^+|\;|{\mathcal{D}}_j^+ \vartheta^+_i|^2
+
|{\mathcal{D}}_j^-\rho_i^+|\;|{\mathcal{D}}_j^- \vartheta^+_i|^2
\big)+
2 \kappa_0^+ h \rho_i^+\\&\qquad\qquad
+
|\epsilon_i^{(9)}|+
\sum_{j=1}^2\Big|
{\mathcal{D}}_j^+\vartheta^+_i\;\epsilon_i^{(10,+,j)}+
{\mathcal{D}}_j^-\vartheta^+_i\;\epsilon_i^{(10,-,j)}
\Big|
\\ &\le
2e^{2\pi}h\, \sum_{j=1}^2
(\rho_i^+)^2\,
\Big( |{\mathcal{D}}_j^+ \vartheta^+_i|^3+|{\mathcal{D}}_j^- \vartheta^+_i|^3\Big)\\&\qquad\qquad+
2e^{2\pi}h\,\sum_{j=1}^2\rho_i^+
\big( |{\mathcal{D}}_j^+\rho_i^+|\;|{\mathcal{D}}_j^+ \vartheta^+_i|^2
+
|{\mathcal{D}}_j^-\rho_i^+|\;|{\mathcal{D}}_j^- \vartheta^+_i|^2
\big)+
2 \kappa_0^+ h \rho_i^+
\\&\qquad\qquad+
\sum_{j=1}^2\Big(
|\rho^2_{i+he_j}-(\rho_i^+)^2|\,|{\mathcal{D}}_j^+({\mathcal{D}}_j^+\vartheta^+)_i|+
|\rho^2_{i-he_j}-(\rho_i^+)^2|\,|{\mathcal{D}}_j^-({\mathcal{D}}_j^-\vartheta^+)_i|\\&\qquad\qquad\qquad+
(\rho_i^+)^2\,|{\mathcal{L}}_j\vartheta^+_{i+he_j}
+{\mathcal{L}}_j\vartheta^+_{i-he_j}-2{\mathcal{L}}_j\vartheta^+_{i}|\Big)\\&\qquad\qquad+
\sum_{j=1}^2\Big(
|\rho^+_{i+he_j}-\rho_i^+|\,|{\mathcal{D}}_j^+\rho_i^+|\,
|{\mathcal{D}}_j^+\vartheta^+_i|
+|\rho^+_{i-he_j}-\rho_i^+|\,|{\mathcal{D}}_j^-\rho_i^+|\,|{\mathcal{D}}_j^-\vartheta^+_i|\Big),
\end{split}
\end{equation*}
thanks to \eqref{spearr0}, 
\eqref{spearr},
\eqref{EPS9}, \eqref{3.22PRE}, and~\eqref{3.22BIS}. Thus, \eqref{eq:stimaerrore eps12 in enunciato lemma} easily follows.
\end{proof}

Theorem \ref{DGDG} will now be obtained by means of a new discrete quantitative Liouville-type result. Our argument is inspired by \cite[Proof of Theorem 1.8]{MR1655510}.

\begin{proof}[Proof of Theorem \ref{DGDG}]
We let~$R>2(1+h)$, to be taken as large as we wish in what follows,
and~$\varphi\in C^\infty(\R^2,[0,1])$ with~$\varphi=1$ in~$B_1$ and~$\varphi=0$ in~$\R^2\setminus B_2$. For every~$i\in h\Z^2$, we set~$\varphi^{(R)}_i:=\varphi(i/R)$ and
\begin{equation}\label{QUAD}
\tau^{(R)}_i:=\left(\varphi^{(R)}_i\right)^2.
\end{equation}
Using~\eqref{BYPa2}, recalling~\eqref{K33},
and setting~$\widetilde\vartheta^+_i:=\vartheta^+_i-\vartheta^+_\infty$
we have that
\begin{eqnarray*}
\sum_{{1\le j\le 2}\atop{i\in h\Z^2}}
{\mathcal{D}}_j^-(\rho^2 {\mathcal{D}}_j^-\vartheta^+)_i\,(\tau^{(R)}\widetilde\vartheta^+)_i=
-\sum_{{1\le j\le 2}\atop{i\in h\Z^2}}
(\rho_i^+)^2 \,{\mathcal{D}}_j^-\vartheta^+_i\;{\mathcal{D}}_j^+(\tau^{(R)}\widetilde\vartheta^+)_i,
\end{eqnarray*}
and accordingly, in view of~\eqref{PROFGRAD-1},
\begin{eqnarray*}&&
\sum_{{1\le j\le 2}\atop{i\in h\Z^2}}
{\mathcal{D}}_j^-(\rho^2 {\mathcal{D}}_j^-\vartheta^+)_i\,(\tau^{(R)}\widetilde\vartheta^+)_i\\&=&
-\sum_{{1\le j\le 2}\atop{i\in h\Z^2}}
(\rho_i^+)^2 \,{\mathcal{D}}_j^-\vartheta^+_i\;\frac{\tau^{(R)}_{i+he_j}+\tau^{(R)}_i}{2}\;{\mathcal{D}}_j^+\vartheta^+_i
-\sum_{{1\le j\le 2}\atop{i\in h\Z^2}}
(\rho_i^+)^2 \,{\mathcal{D}}_j^-\vartheta^+_i\;\frac{\widetilde\vartheta^+_{i+he_j}+\widetilde\vartheta^+_i}{2}\;{\mathcal{D}}_j^+\tau^{(R)}_i,
\end{eqnarray*}
and similarly
\begin{eqnarray*}&&
\sum_{{1\le j\le 2}\atop{i\in h\Z^2}}
{\mathcal{D}}_j^+(\rho^2 {\mathcal{D}}_j^+\vartheta^+)_i\,(\tau^{(R)}\widetilde\vartheta^+)_i\\&=&
-\sum_{{1\le j\le 2}\atop{i\in h\Z^2}}
(\rho_i^+)^2 \,{\mathcal{D}}_j^+\vartheta^+_i\;\frac{\tau^{(R)}_{i-he_j}+\tau^{(R)}_i}{2}\;{\mathcal{D}}_j^-\vartheta^+_i
-\sum_{{1\le j\le 2}\atop{i\in h\Z^2}}
(\rho_i^+)^2 \,{\mathcal{D}}_j^+\vartheta^+_i\;\frac{\widetilde\vartheta^+_{i-he_j}+\widetilde\vartheta^+_i}{2}\;{\mathcal{D}}_j^-\tau^{(R)}_i.
\end{eqnarray*}
{F}rom these considerations, we obtain that
\begin{equation}\label{0909093}
\begin{split}&
\sum_{{1\le j\le 2}\atop{i\in h\Z^2}}\Big(
{\mathcal{D}}_j^+(\rho^2 {\mathcal{D}}_j^+\vartheta^+)_i+
{\mathcal{D}}_j^-(\rho^2 {\mathcal{D}}_j^-\vartheta^+)_i\Big)\,(\tau^{(R)}\widetilde\vartheta^+)_i\\=\,&
-\frac12\sum_{{1\le j\le 2}\atop{i\in h\Z^2}}
(\rho_i^+)^2 \;\big(\tau^{(R)}_{i+he_j}+\tau^{(R)}_{i-he_j}+2\tau^{(R)}_i\big)\;{\mathcal{D}}_j^-\vartheta^+_i\,{\mathcal{D}}_j^+\vartheta^+_i\\&\quad
-\frac12\sum_{{1\le j\le 2}\atop{i\in h\Z^2}}
(\rho_i^+)^2 \,\big(\widetilde\vartheta^+_{i-he_j}+\widetilde\vartheta^+_i\big)\;{\mathcal{D}}_j^+\vartheta^+_i\;{\mathcal{D}}_j^-\tau^{(R)}_i-\frac12\sum_{{1\le j\le 2}\atop{i\in h\Z^2}}
(\rho_i^+)^2 \,\big(\widetilde\vartheta^+_{i+he_j}+\widetilde\vartheta^+_i\big)\;{\mathcal{D}}_j^-\vartheta^+_i\;{\mathcal{D}}_j^+\tau^{(R)}_i.
\end{split}\end{equation}
Now we define
\begin{eqnarray*}&& \epsilon^{(12,+,R)}:=
\frac14\sum_{{1\le j\le 2}\atop{i\in h\Z^2}}
(\rho_i^+)^2 \;\big(\tau^{(R)}_{i+he_j}+\tau^{(R)}_{i-he_j}+2\tau^{(R)}_i\big)\;
\big({\mathcal{D}}_j^-\vartheta^+_i-{\mathcal{D}}_j^+\vartheta^+_i\big)\,{\mathcal{D}}_j^+\vartheta^+_i
\\{\mbox{and }}
&&
\epsilon^{(12,-,R)}:=
\frac14\sum_{{1\le j\le 2}\atop{i\in h\Z^2}}
(\rho_i^+)^2 \;\big(\tau^{(R)}_{i+he_j}+\tau^{(R)}_{i-he_j}+2\tau^{(R)}_i\big)\;
\big({\mathcal{D}}_j^+\vartheta^+_i-{\mathcal{D}}_j^-\vartheta^+_i\big)\,{\mathcal{D}}_j^-\vartheta^+_i
\end{eqnarray*}
and we write
\begin{eqnarray*}&&
\frac12\sum_{{1\le j\le 2}\atop{i\in h\Z^2}}
(\rho_i^+)^2 \;\big(\tau^{(R)}_{i+he_j}+\tau^{(R)}_{i-he_j}+2\tau^{(R)}_i\big)\;{\mathcal{D}}_j^-\vartheta^+_i\,{\mathcal{D}}_j^+\vartheta^+_i\\
&=&
\frac14\sum_{{1\le j\le 2}\atop{i\in h\Z^2}}
(\rho_i^+)^2 \;\big(\tau^{(R)}_{i+he_j}+\tau^{(R)}_{i-he_j}+2\tau^{(R)}_i\big)\;\Big(\big({\mathcal{D}}_j^-\vartheta^+_i-{\mathcal{D}}_j^+\vartheta^+_i\big)+{\mathcal{D}}_j^+\vartheta^+_i\Big)\,{\mathcal{D}}_j^+\vartheta^+_i\\&&\quad+
\frac14\sum_{{1\le j\le 2}\atop{i\in h\Z^2}}
(\rho_i^+)^2 \;\big(\tau^{(R)}_{i+he_j}+\tau^{(R)}_{i-he_j}+2\tau^{(R)}_i\big)\;{\mathcal{D}}_j^-\vartheta^+_i\,\Big(\big({\mathcal{D}}_j^+\vartheta^+_i-{\mathcal{D}}_j^-\vartheta^+_i\big)+{\mathcal{D}}_j^-\vartheta^+_i\Big)
\\&=&\frac14\sum_{{1\le j\le 2}\atop{i\in h\Z^2}}
(\rho_i^+)^2 \;\big(\tau^{(R)}_{i+he_j}+\tau^{(R)}_{i-he_j}+2\tau^{(R)}_i\big)\;\Big(
|{\mathcal{D}}_j^+\vartheta^+_i|^2+|{\mathcal{D}}_j^-\vartheta^+_i|^2
\Big)+\epsilon^{(12,+,R)}+\epsilon^{(12,-,R)}.
\end{eqnarray*}
Plugging this information into~\eqref{0909093}, we gather that
\begin{equation}\label{KS:0olsdPSPD2}
\begin{split}&
\sum_{{1\le j\le 2}\atop{i\in h\Z^2}}\Big(
{\mathcal{D}}_j^+(\rho^2 {\mathcal{D}}_j^+\vartheta^+)_i+
{\mathcal{D}}_j^-(\rho^2 {\mathcal{D}}_j^-\vartheta^+)_i\Big)\,(\tau^{(R)}\widetilde\vartheta^+)_i\\=\,&
-\frac14\sum_{{1\le j\le 2}\atop{i\in h\Z^2}}
(\rho_i^+)^2 \;\big(\tau^{(R)}_{i+he_j}+\tau^{(R)}_{i-he_j}+2\tau^{(R)}_i\big)\;\Big(
|{\mathcal{D}}_j^+\vartheta^+_i|^2+|{\mathcal{D}}_j^-\vartheta^+_i|^2
\Big)\\&\quad
-\frac12\sum_{{1\le j\le 2}\atop{i\in h\Z^2}}
(\rho_i^+)^2 \,\big(\widetilde\vartheta^+_{i-he_j}+\widetilde\vartheta^+_i\big)\;{\mathcal{D}}_j^+\vartheta^+_i\;{\mathcal{D}}_j^-\tau^{(R)}_i-\frac12\sum_{{1\le j\le 2}\atop{i\in h\Z^2}}
(\rho_i^+)^2 \,\big(\widetilde\vartheta^+_{i+he_j}+\widetilde\vartheta^+_i\big)\;{\mathcal{D}}_j^-\vartheta^+_i\;{\mathcal{D}}_j^+\tau^{(R)}_i-\epsilon^{(12,R)},
\end{split}\end{equation}
where
$$\epsilon^{(12,R)}:=\epsilon^{(12,+,R)}+\epsilon^{(12,-,R)}.$$
It is interesting to observe that
$$ {\mathcal{D}}_j^-\vartheta^+_i-{\mathcal{D}}_j^+\vartheta^+_i=\frac{\big(
\vartheta^+_i-\vartheta^+_{i-he_j}
\big)-\big(
\vartheta^+_{i+he_j}-\vartheta^+_i
\big)}{h}=-h{\mathcal{L}}_j\vartheta^+_i
=-h{\mathcal{D}}_j^+ ({\mathcal{D}}_j^+ \vartheta^+)_{i-he_j},$$
thanks to~\eqref{ITER-01}, which
yields that
$$| \epsilon^{(12,+,R)}|\le
\sum_{{1\le j\le 2}\atop{i\in h\Z^2}}
(\rho_i^+)^2 \;
\big|{\mathcal{D}}_j^-\vartheta^+_i-{\mathcal{D}}_j^+\vartheta^+_i\big|\,|{\mathcal{D}}_j^+\vartheta^+_i|\le
h\sum_{{1\le j\le 2}\atop{i\in h\Z^2}}
(\rho_i^+)^2 \,|{\mathcal{D}}_j^+\vartheta^+_i|\,|{\mathcal{D}}_j^+ ({\mathcal{D}}_j^+ \vartheta^+)_{i-he_j}|
$$
and similarly
$$
| \epsilon^{(12,-,R)}|\le
h\sum_{{1\le j\le 2}\atop{i\in h\Z^2}}
(\rho_i^+)^2 \,|{\mathcal{D}}_j^-\vartheta^+_i|\,|{\mathcal{D}}_j^- ({\mathcal{D}}_j^- \vartheta^+)_{i+he_j}|.
$$
This and~\eqref{K2} give that
\begin{equation}\label{EPS13}
| \epsilon^{(12,R)}|\le
h\sum_{{1\le j\le 2}\atop{i\in h\Z^2}}
(\rho_i^+)^2 \,\Big(
|{\mathcal{D}}_j^+\vartheta^+_i|\,|{\mathcal{D}}_j^+ ({\mathcal{D}}_j^+ \vartheta^+)_{i-he_j}|+
|{\mathcal{D}}_j^-\vartheta^+_i|\,|{\mathcal{D}}_j^- ({\mathcal{D}}_j^- \vartheta^+)_{i+he_j}|
\Big)\le \kappa_2^+\,h .
\end{equation}
We now combine~\eqref{KS:0olsdPSPD} and~\eqref{KS:0olsdPSPD2} to see that
\begin{equation}\label{C5rahrpp}
\begin{split}&
\frac14\sum_{{1\le j\le 2}\atop{i\in h\Z^2}}
(\rho_i^+)^2 \;\big(\tau^{(R)}_{i+he_j}+\tau^{(R)}_{i-he_j}+2\tau^{(R)}_i\big)\;\Big(
|{\mathcal{D}}_j^+\vartheta^+_i|^2+|{\mathcal{D}}_j^-\vartheta^+_i|^2
\Big)\\&\quad+
\frac12\sum_{{1\le j\le 2}\atop{i\in h\Z^2}}
(\rho_i^+)^2 \,\big(\widetilde\vartheta^+_{i-he_j}+\widetilde\vartheta^+_i\big)\;{\mathcal{D}}_j^+\vartheta^+_i\;{\mathcal{D}}_j^-\tau^{(R)}_i+\frac12\sum_{{1\le j\le 2}\atop{i\in h\Z^2}}
(\rho_i^+)^2 \,\big(\widetilde\vartheta^+_{i+he_j}+\widetilde\vartheta^+_i\big)\;{\mathcal{D}}_j^-\vartheta^+_i\;{\mathcal{D}}_j^+\tau^{(R)}_i=\epsilon^{(14,R)},
\end{split}
\end{equation}
where
$$ \epsilon^{(13,R)}:=\sum_{i\in h\Z^2} \epsilon^{\star}_i (\tau^{(R)}\widetilde\vartheta^+)_i,$$
and
$$\epsilon^{(14,R)}:=-\epsilon^{(12,R)}-\epsilon^{(13,R)}.$$
We exploit~\eqref{K33} and~\eqref{eq:stimaerrore eps12 in enunciato lemma},
and we point out that
\begin{equation}\label{EPS14}
\begin{split}
|\epsilon^{(13,R)}|\,&\le
\sum_{i\in h\Z^2}| \epsilon^{\star}_i |\,|\widetilde\vartheta^+_i|\\&\le
2e^{2\pi}h\, \sum_{{1\le j\le2}\atop{i\in h\Z^2}}
(\rho_i^+)^2\,
\Big( |{\mathcal{D}}_j^+ \vartheta^+_i|^3+|{\mathcal{D}}_j^- \vartheta^+_i|^3\Big)\,|\widetilde\vartheta^+_i|
\\&\qquad+
2e^{2\pi}h\,\sum_{{1\le j\le2}\atop{i\in h\Z^2}}\rho_i^+\,
\big( |{\mathcal{D}}_j^+\rho_i^+|\;|{\mathcal{D}}_j^+ \vartheta^+_i|^2
+
|{\mathcal{D}}_j^-\rho_i^+|\;|{\mathcal{D}}_j^- \vartheta^+_i|^2
\big)\,|\widetilde\vartheta^+_i|+
2 \kappa_0^+ h \sum_{i\in h\Z^2} \rho_i^+ \,|\widetilde\vartheta^+_i|\\&\qquad
+
h  \sum_{{1\le j\le2}\atop{i\in h\Z^2}} \Big(
|{\mathcal{D}}_j^+(\rho_i^+)^2| \,|{\mathcal{D}}_j^+({\mathcal{D}}_j^+\vartheta^+)_i|+
|{\mathcal{D}}_j^-(\rho_i^+)^2| \,|{\mathcal{D}}_j^-({\mathcal{D}}_j^-\vartheta^+)_i|+
 h \,  (\rho_i^+)^2 \,  | {\mathcal{L}_j^2}\vartheta^+_{i} |\Big) \,|\widetilde\vartheta^+_i|
 \\&\qquad +
h  \sum_{{1\le j\le2}\atop{i\in h\Z^2}} \Big(
|{\mathcal{D}}_j^+\rho_i^+|^2\,
|{\mathcal{D}}_j^+\vartheta^+_i|
+ |{\mathcal{D}}_j^-\rho_i^+|^2\,|{\mathcal{D}}_j^-\vartheta^+_i|\Big) \,|\widetilde\vartheta^+_i|
\\&=
\big(2e^{2\pi}\kappa_3^++
2e^{2\pi}\kappa_4^++
2 \kappa_5^++
\kappa_6^++\kappa_7^+\big)h.
\end{split}
\end{equation}
Now, we recall~\eqref{QUAD} and we exploit~\eqref{PROFGRAD-2}
to write that
\begin{eqnarray*}
{\mathcal{D}}_j^-\tau^{(R)}_i=
\left(\varphi^{(R)}_{i-he_j}+\varphi^{(R)}_i\right)\;{\mathcal{D}}_j^-\varphi^{(R)}_i,
\end{eqnarray*}
and thus, recalling~\eqref{PIKS},
\begin{equation}\label{9oiknddfP--0}
\left| \sum_{{1\le j\le 2}\atop{i\in h\Z^2}}
(\rho_i^+)^2 \,\big(\widetilde\vartheta^+_{i-he_j}+\widetilde\vartheta^+_i\big)\;{\mathcal{D}}_j^+\vartheta^+_i\;{\mathcal{D}}_j^-\tau^{(R)}_i\right|\le 4 \pi
\sum_{{1\le j\le 2}\atop{i\in h\Z^2}}
(\rho_i^+)^2 \,|{\mathcal{D}}_j^+\vartheta^+_i|\left(\varphi^{(R)}_{i-he_j}+\varphi^{(R)}_i\right)\,|{\mathcal{D}}_j^-\varphi^{(R)}_i|.
\end{equation}
We also observe that if~$|i|\le R/2$
then
$$|i-he_j|\le|i|+h\le \frac{R}{2}+h<R,$$
and consequently
\begin{equation}\label{9oiknddfP--1} h{\mathcal{D}}_j^-\varphi^{(R)}_i=
\varphi^{(R)}_i-\varphi^{(R)}_{i-he_j}=\varphi\left(\frac{i}{R}\right)-
\varphi\left(\frac{i-he_j}{R}\right)
=1-1=0.\end{equation}
Similarly, if~$|i|\ge 4R$
then
$$|i-he_j|\ge|i|-h\ge 4R-h>2R,$$
and, as a consequence,
\begin{equation}\label{9oiknddfP--2}h {\mathcal{D}}_j^-\varphi^{(R)}_i=
\varphi^{(R)}_i-\varphi^{(R)}_{i-he_j}=\varphi\left(\frac{i}{R}\right)-
\varphi\left(\frac{i-he_j}{R}\right)
=0-0=0.\end{equation}
By collecting the results in~\eqref{9oiknddfP--0},
\eqref{9oiknddfP--1}, and~\eqref{9oiknddfP--2},
we conclude that
\begin{eqnarray*}&&
\left| \sum_{{{1\le j\le 2}\atop{i\in h\Z^2}}}
(\rho_i^+)^2 \,\big(\widetilde\vartheta^+_{i-he_j}+\widetilde\vartheta^+_i\big)\;{\mathcal{D}}_j^+\vartheta^+_i\;{\mathcal{D}}_j^-\tau^{(R)}_i\right|\le 4 \pi
\sum_{ {{1\le j\le 2}\atop{i\in h\Z^2}}\atop{|i|\in[R/2,4R]}}
(\rho_i^+)^2 \,|{\mathcal{D}}_j^+\vartheta^+_i|\left(\varphi^{(R)}_{i-he_j}+\varphi^{(R)}_i\right)\,|{\mathcal{D}}_j^-\varphi^{(R)}_i|\\&&\qquad\le
4 \pi\,\sqrt{
\sum_{ {{1\le j\le 2}\atop{i\in h\Z^2}}\atop{|i|\in[R/2,4R]}}
(\rho_i^+)^2 \left(\varphi^{(R)}_{i-he_j}+\varphi^{(R)}_i\right)^2\,|{\mathcal{D}}_j^+\vartheta^+_i|^2}\;
\sqrt{
\sum_{ {{1\le j\le 2}\atop{i\in h\Z^2}}\atop{|i|\in[R/2,4R]}}
(\rho_i^+)^2 \,|{\mathcal{D}}_j^-\varphi_i^{(R)}|^2}.
\end{eqnarray*}
As a consequence,
since
$$ \left(\varphi^{(R)}_{i-he_j}+\varphi^{(R)}_i\right)^2\le
2 \left(\big(\varphi^{(R)}_{i-he_j}\big)^2+\big(\varphi^{(R)}_i\big)^2\right)=
2 \left(\tau^{(R)}_{i-he_j}+\tau^{(R)}_i\right),$$
we obtain that
\begin{equation}\label{SLCRHle1}
\begin{split}&
\left| \sum_{{{1\le j\le 2}\atop{i\in h\Z^2}}}
(\rho_i^+)^2 \,\big(\widetilde\vartheta^+_{i-he_j}+\widetilde\vartheta^+_i\big)\;{\mathcal{D}}_j^+\vartheta^+_i\;{\mathcal{D}}_j^-\tau^{(R)}_i\right|\\&\qquad
\le 4\sqrt{2} \, \pi\,\sqrt{
\sum_{ {{1\le j\le 2}\atop{i\in h\Z^2}}\atop{|i|\in[R/2,4R]}}
(\rho_i^+)^2 \,\big(\tau^{(R)}_{i-he_j}+\tau^{(R)}_i\big)\,|{\mathcal{D}}_j^+\vartheta^+_i|^2}\;
\sqrt{
\sum_{ {{1\le j\le 2}\atop{i\in h\Z^2}}\atop{|i|\in[R/2,4R]}}
(\rho_i^+)^2 \,|{\mathcal{D}}_j^-\varphi^{(R)}_i|^2}
\\&\qquad
\le 4\sqrt{2} \, \pi\,\sqrt{
\sum_{ {{1\le j\le 2}\atop{i\in h\Z^2}}\atop{|i|\in[R/2,4R]}}
(\rho_i^+)^2 \,\big(\tau^{(R)}_{i+he_j}+\tau^{(R)}_{i-he_j}+2\tau^{(R)}_i\big)\,\Big(
|{\mathcal{D}}_j^+\vartheta^+_i|^2+
|{\mathcal{D}}_j^-\vartheta^+_i|^2\Big)}\\&\qquad\qquad\cdot
\sqrt{
\sum_{ {{1\le j\le 2}\atop{i\in h\Z^2}}\atop{|i|\in[R/2,4R]}}
(\rho_i^+)^2 \,\Big(|{\mathcal{D}}_j^+\varphi^{(R)}_i|^2+|{\mathcal{D}}_j^-\varphi^{(R)}_i|^2\Big)}
.\end{split}
\end{equation}
Similarly,
\begin{equation}\label{SLCRHle2}
\begin{split}&
\left| \sum_{{{1\le j\le 2}\atop{i\in h\Z^2}}}
(\rho_i^+)^2 \,\big(\widetilde\vartheta^+_{i+he_j}+\widetilde\vartheta^+_i\big)\;{\mathcal{D}}_j^-\vartheta^+_i\;{\mathcal{D}}_j^+\tau^{(R)}_i\right|\\&\qquad
\le 4\sqrt{2} \, \pi\,\sqrt{
\sum_{ {{1\le j\le 2}\atop{i\in h\Z^2}}\atop{|i|\in[R/2,4R]}}
(\rho_i^+)^2 \,\big(\tau^{(R)}_{i+he_j}+\tau^{(R)}_{i-he_j}+2\tau^{(R)}_i\big)\,\Big(
|{\mathcal{D}}_j^+\vartheta^+_i|^2+
|{\mathcal{D}}_j^-\vartheta^+_i|^2\Big)}\\&\qquad\qquad\cdot
\sqrt{
\sum_{ {{1\le j\le 2}\atop{i\in h\Z^2}}\atop{|i|\in[R/2,4R]}}
(\rho_i^+)^2 \,\Big(|{\mathcal{D}}_j^+\varphi^{(R)}_i|^2+|{\mathcal{D}}_j^-\varphi^{(R)}_i|^2\Big)}
.\end{split}
\end{equation}
With this, plugging~\eqref{SLCRHle1} and~\eqref{SLCRHle2} into~\eqref{C5rahrpp},
we conclude that
\begin{equation}\label{PIv9}
\begin{split}&
\frac14\sum_{{1\le j\le 2}\atop{i\in h\Z^2}}
(\rho_i^+)^2 \;\big(\tau^{(R)}_{i+he_j}+\tau^{(R)}_{i-he_j}+2\tau^{(R)}_i\big)\;\Big(
|{\mathcal{D}}_j^+\vartheta^+_i|^2+|{\mathcal{D}}_j^-\vartheta^+_i|^2
\Big)\\&\qquad\le|\epsilon^{(14,R)}|+
 4\sqrt{2} \, \pi\,\sqrt{
\sum_{ {{1\le j\le 2}\atop{i\in h\Z^2}}\atop{|i|\in[R/2,4R]}}
(\rho_i^+)^2 \,\big(\tau^{(R)}_{i+he_j}+\tau^{(R)}_{i-he_j}+2\tau^{(R)}_i\big)\,\Big(
|{\mathcal{D}}_j^+\vartheta^+_i|^2+
|{\mathcal{D}}_j^-\vartheta^+_i|^2\Big)}\\&\qquad\qquad\cdot
\sqrt{
\sum_{ {{1\le j\le 2}\atop{i\in h\Z^2}}\atop{|i|\in[R/2,4R]}}
(\rho_i^+)^2 \,\Big(|{\mathcal{D}}_j^+\varphi^{(R)}_i|^2+|{\mathcal{D}}_j^-\varphi^{(R)}_i|^2\Big)}.
\end{split}
\end{equation}
By the Cauchy-Schwarz Inequality, for all~$a$, $b\ge0$,
\begin{eqnarray*}
4\sqrt{2}\,\pi\sqrt{a}\,\sqrt{b}\le \frac{a}{16}+ 128 \,\pi^2 b,
\end{eqnarray*}
and therefore we deduce from~\eqref{PIv9} that
\begin{equation}\label{PIv10}
\begin{split}&
\frac3{16}\sum_{{1\le j\le 2}\atop{i\in h\Z^2}}
(\rho_i^+)^2 \;\big(\tau^{(R)}_{i+he_j}+\tau^{(R)}_{i-he_j}+2\tau^{(R)}_i\big)\;\Big(
|{\mathcal{D}}_j^+\vartheta^+_i|^2+|{\mathcal{D}}_j^-\vartheta^+_i|^2
\Big)\\&\qquad\le|\epsilon^{(14,R)}|+
128 \, \pi^2\,
\sum_{ {{1\le j\le 2}\atop{i\in h\Z^2}}\atop{|i|\in[R/2,4R]}}
(\rho_i^+)^2 \,\Big(|{\mathcal{D}}_j^+\varphi^{(R)}_i|^2+|{\mathcal{D}}_j^-\varphi^{(R)}_i|^2\Big).
\end{split}
\end{equation}
We observe that
\begin{equation*} |{\mathcal{D}}_j^\pm\varphi^{(R)}_i|=\frac1h\,
\left|
\varphi\left(\frac{i\pm he_j}{R}\right)-\varphi\left(\frac{i}{R}\right)
\right|\le \frac{\|\varphi\|_{C^1(\R^2)}}{R}.\end{equation*}
{F}rom this and~\eqref{D23ecs824}, we thereby conclude that
\begin{equation}\label{IN0oqw02} \limsup_{R\nearrow+\infty}
\sum_{ {{1\le j\le 2}\atop{i\in h\Z^2}}\atop{|i|\in[R/2,4R]}}
(\rho_i^+)^2 \,\Big(|{\mathcal{D}}_j^+\varphi^{(R)}_i|^2+|{\mathcal{D}}_j^-\varphi^{(R)}_i|^2\Big)
<+\infty.\end{equation}
Now we observe that
\begin{equation}\label{EPS15}\begin{split}&
\limsup_{R\nearrow+\infty}|\epsilon^{(14,R)}|\le \limsup_{R\nearrow+\infty}
|\epsilon^{(12,R)}|+|\epsilon^{(13,R)}|\\&\qquad\qquad\leq
\left(\kappa_2^++
2e^{2\pi}\kappa_3^++
2e^{2\pi}\kappa_4^++
\kappa_5^++\kappa_6^++\kappa_7^+\right)h<+\infty,\end{split}\end{equation}
due to~\eqref{K2}, \eqref{K33}, \eqref{EPS13},
and~\eqref{EPS14}.

Hence, we thus deduce from~\eqref{PIv10}, \eqref{IN0oqw02} and~\eqref{EPS15}
that
$$ \sum_{{1\le j\le 2}\atop{i\in h\Z^2}}
(\rho_i^+)^2 \;\Big(
|{\mathcal{D}}_j^+\vartheta^+_i|^2+|{\mathcal{D}}_j^-\vartheta^+_i|^2
\Big)<+\infty.$$
In particular,
$$  \limsup_{R\nearrow+\infty}\sum_{ {{1\le j\le 2}\atop{i\in h\Z^2}}\atop{|i|\ge R/2}}
(\rho_i^+)^2 \;\Big(
|{\mathcal{D}}_j^+\vartheta^+_i|^2+|{\mathcal{D}}_j^-\vartheta^+_i|^2
\Big)
=0.$$
Using this information, \eqref{IN0oqw02} and~\eqref{EPS15}
into~\eqref{PIv9},
we thereby obtain that
$$ \frac{1}{4}\,\sum_{{1\le j\le 2}\atop{i\in h\Z^2}}
(\rho_i^+)^2 \;\Big(
|{\mathcal{D}}_j^+\vartheta^+_i|^2+|{\mathcal{D}}_j^-\vartheta^+_i|^2
\Big)\le \limsup_{R\nearrow+\infty}|\epsilon^{(14,R)}|\le \left(
\kappa_2^++
2e^{2\pi}\kappa_3^++
2e^{2\pi}\kappa_4^++
\kappa_5^++\kappa_6^++\kappa_7^+\right)h,$$
thus completing the proof of Theorem~\ref{DGDG}.
\end{proof}

Now, we prove Lemma~\ref{KK-1D}.

\begin{proof}[Proof of Lemma~\ref{KK-1D}] By~\eqref{der-2se}, we know that~$\rho_i^+\ne0$
and~$\rho_i^-\ne0$ for all~$i\in h\Z^2$. Hence, by~\eqref{noinrthoch45}, we find that
$$ {\mathcal{D}}_j^+\vartheta^+_i={\mathcal{D}}_j^+\vartheta^-_i={\mathcal{D}}_j^-\vartheta^+_i={\mathcal{D}}_j^-\vartheta^-_i=0$$
for all~$i\in h\Z^2$ and all~$j\in\{1,2\}$.

As a result, there exist~$\omega^+$ and~$\omega^-\in \C$ such that
\begin{equation}\label{7657654dsfg56} \frac{U_i^+}{|U_i^+|}=\omega^+
\qquad{\mbox{ and }}\qquad \frac{U_i^-}{|U_i^-|}=\omega^-\end{equation}
for all~$i\in h\Z^2$. Also, in light of~\eqref{der-2se}, we know that the imaginary parts of~$\omega^+$
and of~$\omega^-$ are nonzero, and therefore~\eqref{7657654dsfg56} yields that
$$ {\mathcal{D}}_1^\pm u_i= c^\pm\,{\mathcal{D}}_2^\pm u_i\qquad{\mbox{for all }}i\in h\Z^2,$$
where~$c^\pm\in\R$ is the ratio between the real and the imaginary parts of~$\omega^\pm$.
{F}rom this, we obtain~\eqref{Agfoi-doiut},
and accordingly
\begin{equation}\label{-dsuif9jjflman} u_{i\pm he_1}=c^\pm u_{i\pm he_2}+(1-c^\pm) u_i\qquad{\mbox{for all }}i\in h\Z^2.\end{equation}
Now, for all~$m\in\Z$, we define
\begin{equation}\label{-de-w}
\widetilde{u}_{hm}:=u_{hme_2},
\end{equation}
and we prove that~\eqref{k8i-know} holds true.

As a matter of fact, we focus on the proof of~\eqref{k8i-know} when~$k\le0$,
since the proof when~$k\ge0$ is similar. Thus, the proof is by induction over~$|k|$.
When~$k=0$ we have that
\begin{eqnarray*}
&&\sum_{j=0}^{|k|} {\binom {|k|}{j}} (c^{\sigma_k})^j\,(1-c^{\sigma_k})^{|k|-j}\,\widetilde{u}_{h(m+\sigma_k j)}
-u_{(hk,hm)}=\widetilde{u}_{hm}-u_{(0,hm)}=0,
\end{eqnarray*}
thanks to~\eqref{-de-w}, and this gives~\eqref{k8i-know} when~$k=0$.

Moreover, if~$k=-1$, one uses~\eqref{-de-w}
and~\eqref{-dsuif9jjflman} (here, with~$i:=hme_2$ and the
``minus sign'' choice), finding that
\begin{eqnarray*}
&&\sum_{j=0}^{|k|} {\binom {|k|}{j}} (c^{\sigma_k})^j\,(1-c^{\sigma_k})^{|k|-j}\,\widetilde{u}_{h(m+\sigma_k j)}
-u_{(hk,hm)}=
\sum_{j=0}^{1} {\binom {1}{j}} (c^{-})^j\,(1-c^{-})^{1-j}\,\widetilde{u}_{h(m- j)}
-u_{(-h,hm)}\\&&\qquad=
(1-c^{-})\,\widetilde{u}_{hm}+
c^{-}\,\widetilde{u}_{h(m- 1)}
-u_{(-h,hm)}=
(1-c^{-})\,{u}_{hme_2}+
c^{-}\,{u}_{hme_2- he_2}
-u_{hme_2-he_1}=0,
\end{eqnarray*}
which gives~\eqref{k8i-know} when~$k=-1$.

Suppose now that~\eqref{k8i-know} holds true for all integers~$\{0,-1,-2,\dots,k\}$, for some~$k\le-1$
and let us prove it for the integer~$k-1$ (that is equal to~$-|k|-1$).
To this end, we make use of~\eqref{-dsuif9jjflman} 
with~$i:=(hk,hm)$ and the ``minus sign'' choice,
and we see that
\begin{eqnarray*}&&
u_{(h(k-1),hm)}=u_{(hk,hm)-he_1}
=c^- u_{(hk,hm)- he_2}+(1-c^-) u_{(hk,hm)}=
c^{\sigma_k} u_{(hk,h(m-1))}+(1-c^{\sigma_k}) u_{(hk,hm)}.
\end{eqnarray*}
This and the recursive assumption yields that
\begin{eqnarray*}
u_{(h(k-1),hm)}&
=&
\sum_{j=0}^{|k|} {\binom {|k|}{j}} (c^{\sigma_k})^{j+1}\,(1-c^{\sigma_k})^{|k|-j}\,\widetilde{u}_{h(m-1- j)}\\&&\qquad
+
\sum_{j=0}^{|k|} {\binom {|k|}{j}} (c^{\sigma_k})^j\,(1-c^{\sigma_k})^{|k|-j+1}\,\widetilde{u}_{h(m- j)}
\\&
=&
\sum_{J=1}^{|k|+1} {\binom {|k|}{J-1}} (c^{\sigma_k})^{J}\,(1-c^{\sigma_k})^{|k|-J+1}\,\widetilde{u}_{h(m-J)}\\&&\qquad
+
\sum_{j=0}^{|k|} {\binom {|k|}{j}} (c^{\sigma_k})^j\,(1-c^{\sigma_k})^{|k|-j+1}\,\widetilde{u}_{h(m- j)}\\
&=&(c^{\sigma_k})^{|k|+1}\,\widetilde{u}_{h(m-|k|-1)}+
(1-c^{\sigma_k})^{|k|+1}\,\widetilde{u}_{h m}\\&&\qquad+
\sum_{j=1}^{|k|} \left({\binom {|k|}{j-1}}+{\binom {|k|}{j}}\right) (c^{\sigma_k})^j\,(1-c^{\sigma_k})^{|k|-j+1}\,\widetilde{u}_{h(m- j)}.
\end{eqnarray*}
Therefore, noticing that~$\sigma_{k-1}=-=\sigma_k$, and also that~$|k-1|=-k+1=|k|+1$, and
using the Pascal's triangle recurrence relation
$$ {\binom {|k|}{j-1}}+{\binom {|k|}{j}}={\binom {|k|+1}{j}},$$
we conclude that
\begin{eqnarray*}
u_{(h(k-1),hm)}&
=&(c^{\sigma_{k-1}})^{|k|+1}\,\widetilde{u}_{h(m-|k|-1)}+
(1-c^{\sigma_{k-1}})^{|k|+1}\,\widetilde{u}_{h m}\\&&\qquad+
\sum_{j=1}^{|k|} {\binom {|k|+1}{j}} (c^{\sigma_{k-1}})^j\,(1-c^{\sigma_{k-1}})^{|k|-j+1}\,\widetilde{u}_{h(m- j)}\\&=&
\sum_{j=0}^{|k|+1} {\binom {|k|+1}{j}} (c^{\sigma_{k-1}})^j\,(1-c^{\sigma_{k-1}})^{|k|-j+1}\,\widetilde{u}_{h(m- j)}\\&=&
\sum_{j=0}^{|k-1|} {\binom {|k-1|}{j}} (c^{\sigma_{k-1}})^j\,(1-c^{\sigma_{k-1}})^{|k-1|-j}\,\widetilde{u}_{h(m+\sigma_{k-1} j)}
\end{eqnarray*}
that finishes the proof of~\eqref{k8i-know}.
\end{proof}

Concering the one-dimensional properties of the solutions, we remark that there exist one-dimensional
solutions for discrete semilinear equations:
more specifically,
given a strictly monotone function~$\varphi:\R\to\R$ and
a vector~$\omega\in\R^2\setminus\{0\}$,
it is always possible to construct a function~$f:\R\to\R$ such that,
setting~$u_i:=\varphi(\omega\cdot i)$ for every~$i\in h\Z^2$,
we have that~$u$ is a solution of the discrete
semilinear equation
\begin{equation}\label{0oijhb8uj-0987654-pKCjHmCKOS}
{\mathcal{L}}u_i=f(u_i)\qquad{\mbox{ for all }}i\in h\Z^2.
\end{equation}
To check this claim, we consider a strictly monotone function~$\varphi:\R\to\R$ and we denote by~$\varphi^{-1}$ its inverse. In this way,
$$ \varphi^{-1}(\varphi(t))=t\qquad{\mbox{for every }}t\in\R.$$
Thus, for every~$x\in\R^2$ we define
$$ \psi(x):=\frac1{h^2}\sum_{j=1}^2
\Big(\varphi(\omega\cdot (x+he_j))+\varphi(\omega\cdot (x-he_j))-2\varphi(\omega\cdot x)\Big).$$
For every~$r\in\R$, let also
$$ \Psi(r):=\psi\left(\frac{h\omega r}{|\omega|^2}\right).$$
Notice that, for each~$i\in\Z^2$,
\begin{eqnarray*}
\Psi(\omega\cdot i)&=&
\psi\left(\frac{h\omega (\omega\cdot i)}{|\omega|^2}\right)
\\&=&
\frac1{h^2}\sum_{j=1}^2
\Big(\varphi(h\omega\cdot (i+e_j))+\varphi(h\omega\cdot (i+e_j))-2\varphi(h\omega\cdot i)\Big)\\&=&
\psi(hi).
\end{eqnarray*}
Hence, for each~$i\in h\Z^2$ we set
\begin{equation}\label{0DEDFDYB8im10laAmmdLLA} u_i:=\varphi(\omega\cdot i)\end{equation}
and we check that~$u$ is a solution of~\eqref{0oijhb8uj-0987654-pKCjHmCKOS}, with
$$ f(t):=\Psi(\varphi^{-1}(t))\qquad{\mbox{for all }}t\in\R.$$
Indeed, by construction,
\begin{eqnarray*}
&&\frac1{h^2}\sum_{j=1}^2
\big(u_{i+he_j}+u_{i-he_j}-2u_i\big)
\\&=&
\frac1{h^2}\sum_{j=1}^2
\Big(\varphi(\omega\cdot (i+he_j))+\varphi(\omega\cdot (i-he_j))-2\varphi(\omega\cdot i)\Big)
\\
&=&\psi(hi)\\
&=& \Psi(\omega\cdot i)\\
&=& \Psi\Big(
\varphi^{-1}\big(\varphi
(\omega\cdot i)\big)\Big)\\
&=&\Psi\big(
\varphi^{-1}(u_i)\big)\\&=&
f(u_i),
\end{eqnarray*}
that is~\eqref{0oijhb8uj-0987654-pKCjHmCKOS}.

Besides, if~$\omega:=e_2$
the function~$u$ defined in~\eqref{0DEDFDYB8im10laAmmdLLA}
satisfies~\eqref{Agfoi-doiut} with
\begin{equation}\label{apq4c35v734756573}
c^\pm:=0,\end{equation} since
\begin{eqnarray*}&&
\frac{u_{i\pm he_1}-u_i}{u_{i\pm he_2}-u_i}
=\frac{\varphi(i_2)-\varphi(i_2)}{\varphi(i_2\pm h)-\varphi(i_2)}=0.
\end{eqnarray*}
Also, $u$ satisfies~\eqref{k8i-know}
with~$\widetilde{u}_{j}:=\varphi(j)$ for all~$j\in h\Z$, since,
by~\eqref{apq4c35v734756573},
$$
u_{(hk,hm)}=\varphi(hm)=\widetilde{u}_{hm}.
$$

Similarly, if~$\omega:=e_1+e_2$
the function~$u$ defined in~\eqref{0DEDFDYB8im10laAmmdLLA}
satisfies~\eqref{Agfoi-doiut} with
\begin{equation}\label{as2qc34yvb4676tgretg}
c^\pm:=1,\end{equation} since
\begin{eqnarray*}&&
\frac{u_{i\pm he_1}-u_i}{u_{i\pm he_2}-u_i}
=\frac{\varphi(i_1+i_2\pm h)-\varphi(i_1+i_2)}{
\varphi(i_1+i_2\pm h)-\varphi(i_1+i_2)}=1.
\end{eqnarray*}
Moreover, $u$ satisfies~\eqref{k8i-know}
with~$\widetilde{u}_{j}:=\varphi(j)$ for all~$j\in h\Z$, since,
by~\eqref{as2qc34yvb4676tgretg},
$$ u_{(hk,hm)}=\varphi(h(k+m))=\widetilde{u}_{h(m+k)}=
\widetilde{u}_{h(m+\sigma_k |k|)}.
$$
\medskip

We also observe that one-dimensional solutions of
classical semilinear ordinary differential equations
of the form~$U''=F(U)$, with~$U$ bounded and with bounded
derivatives
(such as the ones arising in the stationary Sine-Gordon equation
when~$F(U):=\sin U$ and~$U(t)=4 \arctan ( e^t)$,
in the Allen-Cahn equation when~$F(U):=U^3-U$ and for instance~$U(t)=\tanh (t / \sqrt{2})$, in the pendulum equation when~$F(U)=-\sin U$ and for instance~$U(t)$ is defined implicitly
by~$\int_0^{U(t)}\frac{d\tau}{\sqrt{ 2 \cos\tau}} = t $),
naturally induce one-dimensional solution of the discrete equation in~\eqref{DGEQ}. Indeed, given~$U$ as above,
one can consider~$u_i:=U(i_2)$ for every~$i\in h\Z^2$ and then
\begin{eqnarray*}
{\mathcal{L}} u_i&=&
\frac{ U(i_2+h) + U(i_2-h) -2U(i_2)}{h^2}\\
&=& \int_{0}^1\left[
\int_{-\tau}^\tau
U''(i_2+h\sigma)\,d\sigma
\right]\,d\tau
\\&=&\int_{0}^1\left[
\int_{-\tau}^\tau
F(U(i_2+h\sigma))\,d\sigma
\right]\,d\tau\\&=&F(U(i_2))+
\int_{0}^1\left[
\int_{-\tau}^\tau
\big(F(U(i_2+h\sigma))-F(U(i_2))\big)\,d\sigma
\right]\,d\tau\\&=&f(i,u_i)
,\end{eqnarray*}
where
\begin{eqnarray*}
f(i,r)&:=&
F(r)+
\int_{0}^1\left[
\int_{-\tau}^\tau
\big(F(U(i_2+h\sigma))-F(U(i_2))\big)\,d\sigma
\right]\,d\tau
\\&=&
F(r)+h
\int_{0}^1\left[
\int_{-\tau}^\tau\left(\int_0^\sigma
F'(U(i_2+h\mu))\,U'(i_2+h\mu)\,d\mu
\right)\,d\sigma
\right]\,d\tau.
\end{eqnarray*}
In this setting, assumption~\eqref{hp:semilinearitaconspazio}
is satisfied\footnote{An alternative way to compute $\kappa_0^+$ for functions of the type $u_i := U(i_2)$ is provided in full details at the beginning of Example \ref{example 4: general semilinear}.} by taking~$\kappa_0^+$ proportional
to the~$C^2$-norm of~$F$ in the range 
of~$U$.

Furthermore, notice that~$u_i$ satisfies~\eqref{k8i-know}
and~\eqref{Agfoi-doiut} with~$c^\pm:=0$.

\section{Examples}\label{sec:examples}
The examples presented in this section show that, in our setting, an
exact symmetry result 
analogous to that in the continuous case cannot hold true.

The examples also show that the rate of convergence of the
estimate \eqref{FORM} in the formal limit as~$h \searrow 0$ is optimal, in the sense that, in these cases, right-hand side and left-hand side of~\eqref{FORM}
are of the same order of $h$.

\begin{example}\label{example 1}
{\rm
For any 
\begin{equation}\label{eq:h tra 0 e 1}
0<h<1
\end{equation}
and $i=(i_1,i_2) \in h\Z^2$, we consider
\begin{equation*}
u_i:=
\begin{cases}
h^4 \quad & \text{for } \, i = (i_1,0) , \, i_1 > 0,
\\
i_2 \quad \, & \text{otherwise in }  h\Z^2 ,
\end{cases}
\end{equation*}
and we set 
\begin{equation}\label{eq:ex1:f def serve per ex4}
f(i, u_i):= \tilde{f}(i):=  
\begin{cases}
h^2 \quad \, & \text{for } \, i=(0,0) \quad \text{and}\quad
 i = (i_1, \pm h) \, \text{ with } \, i_1>0  ,
\\
-3 \, h^2 \quad \, & \text{for } \, i = ( h,0) ,
\\
-2 \, h^2 \quad \, & \text{for } \, i = ( i_1,0) \, \text{ with } \, i_1>h   ,
\\
0 \quad \, & \text{otherwise in } \, h\Z^2 .
\end{cases}
\end{equation}
By inspection, one sees that~${\mathcal{L}}u_i=\tilde f(i)$,
and therefore~\eqref{DGEQ} is satisfied.
It can be easily checked that $f$ satisfies \eqref{hp:GENERALsemilinearitaconspazio} with $L_f^+ = 0 $ and \begin{equation}\label{eq:ex1:kappa0}
\kappa_0^+=5 .
\end{equation}
In particular, $f$ satisfies \eqref{hp:regularitysemilinearity} and \eqref{hp:semilinearitaconspazio} (with $\kappa_0^+=5$).

Also, being $0<h<1$, the function $u$ clearly satisfies~\eqref{MONO}, and hence \eqref{hp:graddiversodazero} holds true.

We now show that $u$ also
satisfies~\eqref{BOULI}, \eqref{K2}, and
\eqref{K33} with $\vartheta^+_\infty := \pi / 2$.
To this aim, we directly compute
$$
{\mathcal{D}}_1^+ u_i =
\begin{cases}
h^3 \quad \, & \text{for } \, i = ( 0 , 0 ) , 
\\
0 \quad \, & \text{otherwise in } \, h\Z^2,
\end{cases}
\qquad
\text{ and }
\qquad
{\mathcal{D}}_2^+ u_i =
\begin{cases}
1-h^3 \quad \, & \text{for } \, i = (i_1,0) \text{ with } i_1>0 ,
\\
h^3 +1 \quad \, & \text{for } \, i = (i_1,-h) \text{ with } i_1>0,
\\
1 \quad \, & \text{otherwise in } \, h\Z^2 ,
\end{cases}
$$
and hence
\begin{eqnarray*}
\vartheta^+_i&=& 
\begin{cases}
\arctan\left( \frac{1}{h^3} \right) \quad \, & \text{for } \, i = (0,0),
\\
\frac{ \pi}{2} \quad & \text{otherwise in } \, h\Z^2 ,
\end{cases}\\
\text{and }\qquad
(\rho_i^+)^2 &=& 
\begin{cases}
1+h^6 \quad \, & \text{for } \, i = (0,0),
\\
( 1 - h^3 )^2 \quad \, & \text{for } \, i = (i_1,0) \, \text{ with } \, i_1>0 ,
\\
( 1 + h^3 )^2 \quad \, & \text{for } \, i = (i_1,-h) \, \text{ with } \, i_1>0 ,
\\
1 \quad & \text{otherwise in } \, h\Z^2 .
\end{cases}
\end{eqnarray*}
Thus, we have that
\begin{eqnarray*}
\kappa_1^+ :=  \sup_{{1\le j\le 2}\atop{i\in h\Z^2}} | {\mathcal{D}}_j^+ u_i | = 1 + h^3 < 2,
\end{eqnarray*}
being~$h < 1$, and this establishes~\eqref{BOULI}.
We also notice that
\begin{equation*}
\rho_i^+ \le (1+h^3)<2.
\end{equation*} 

We then compute
$$
{\mathcal{D}}_1^+ \vartheta^+_i =
\begin{cases}
\frac{1}{h} \left( \frac{ \pi}{2} - \arctan\left( \frac{1}{h^3} \right) \right) \quad \, & \text{for } \, i = ( 0 , 0 ) , 
\\
\frac{1}{h} \left( \arctan\left( \frac{1}{h^3} \right) -\frac{\pi}{2} 
\right) \quad \, & \text{for } \, i = ( -h , 0),
\\
0 \quad \, & \text{otherwise in } \, h\Z^2,
\end{cases}
$$
$$
{\mathcal{D}}_2^+ \vartheta^+_i =
\begin{cases}
\frac{1}{h} \left( \frac{ \pi}{2} - \arctan\left( \frac{1}{h^3} \right) \right) \quad \, & \text{for } \, i = ( 0 , 0 ) , 
\\
\frac{1}{h} \left( \arctan\left( \frac{1}{h^3} \right) -\frac{\pi}{2} \right) \quad \, & \text{for } \, i = ( 0, -h)
,\\
0 \quad \, & \text{otherwise in } \, h\Z^2 ,
\end{cases}
$$
$$
{\mathcal{D}}_1^+ \left( {\mathcal{D}}_1^+ \vartheta^+ \right)_{i-h e_j} =
\begin{cases}
\frac{2}{h^2} \left( \frac{ \pi}{2} - \arctan\left( \frac{1}{h^3} \right) \right) \quad \, & \text{for } \, i = ( 0 , 0 ) , 
\\
\frac{1}{h^2} \left( \arctan\left( \frac{1}{h^3} \right) -\frac{\pi}{2} \right) \quad \, & \text{for } \, i = ( \pm h , 0)
,\\
0 \quad \, & \text{otherwise in } \, h\Z^2,
\end{cases}
$$
and
$$
{\mathcal{D}}_2^+ \left( {\mathcal{D}}_2^+ \vartheta^+ \right)_{i-h e_j} =
\begin{cases}
\frac{2}{h^2} \left( \frac{ \pi}{2} - \arctan\left( \frac{1}{h^3} \right) \right) \quad \, & \text{for } \, i = (0, 0) ,
\\
\frac{1}{h^2} \left( \arctan\left( \frac{1}{h^3} \right) -\frac{\pi}{2} \right) \quad \, & \text{for } \, i = (0, \pm h) ,
\\
0 \quad \, & \text{otherwise in } \, h\Z^2 .
\end{cases}
$$
By noting that for any function $v: h\Z^2 \to \R$ it holds that
\begin{equation}\label{eq:ex1:relations che servono anche in ex4}
{\mathcal{D}}_j^- v_i=  {\mathcal{D}}_j^+ v_{i-h e_j} \quad \text{ and } \quad {\mathcal{D}}_j^- ({\mathcal{D}}_j^- v)_{i+he_j} = {\mathcal{D}}_j^+ ({\mathcal{D}}_j^+ v)_{i-he_j} \, ,
\quad \text{ where } \, i \in h \Z^2 , \, j=1,2,
\end{equation}
we can now directly compute
\begin{equation}\begin{split}\label{poqiuytrsxscvhiyutwer63786}
\kappa_2^+ =\,& \sum_{{1\le j\le 2}\atop{i\in h\Z^2}}
(\rho_i^+)^2 \,
\Big(
|{\mathcal{D}}_j^+\vartheta^+_i|\,|{\mathcal{D}}_j^+ ({\mathcal{D}}_j^+ \vartheta^+)_{i-he_j}|+
|{\mathcal{D}}_j^-\vartheta^+_i|\,|{\mathcal{D}}_j^- ({\mathcal{D}}_j^- \vartheta^+)_{i+he_j}|
\Big)\\
=\,&  (12 + 9 h^6 - 2 h^3) \frac{ \left( \frac{\pi}{2} - \arctan\left( \frac{1}{h^3} \right) \right)^2  }{h^3} .
\end{split}\end{equation}
Now we recall the inequality
\begin{equation}\label{eq:arctanestimatePRE}
\left| \mathrm{sgn}(t) \frac{\pi}{2} - \arctan(t) \right| \le \frac{1}{|t|}  \, \text{ for every } t \in \R , 
\end{equation}
Accordingly, using~\eqref{eq:arctanestimatePRE} with~$t=1/ h^3$,
\begin{equation}\label{eq:arctanestimate}
\left| \mathrm{sgn}\left(\frac1{h^3}\right) 
\frac{\pi}{2} - \arctan\left(\frac1{h^3}\right)  \right| \le h^3.
\end{equation}
{F}rom this and~\eqref{poqiuytrsxscvhiyutwer63786},
and recalling~\eqref{eq:h tra 0 e 1},
we obtain that
\begin{equation}\label{eq:ex1:k2}
\kappa_2^+ \le 21 \, h^3 ,
\end{equation}
that immediately gives~\eqref{K2} and also keeps track of the order of $h$.

In order to verify \eqref{K33}, we also compute
$$
\mathcal{L}_1^2 \vartheta_i^+ =
\begin{cases}
\frac{6}{h^4} \left( \arctan\left( \frac{1}{h^3} \right) - \frac{ \pi}{2}  \right) \quad \, & \text{for } \, i = (0, 0) ,
\\
\frac{4}{h^4} \left( \frac{\pi}{2} - \arctan\left( \frac{1}{h^3} \right)  \right) \quad \, & \text{for } \, i = (\pm h , 0 ) ,
\\
\frac{1}{h^4} \left( \arctan\left( \frac{1}{h^3} \right) - \frac{ \pi}{2}  \right) \quad \, & \text{for } \, i = ( \pm 2  h , 0 ) ,
\\
0  \quad \, & \text{otherwise in } \, h\Z^2 ,
\end{cases}
$$
and
$$
\mathcal{L}_2^2 \vartheta_i^+ =
\begin{cases}
\frac{6}{h^4} \left( \arctan\left( \frac{1}{h^3} \right) - \frac{ \pi}{2}  \right) \quad \, & \text{for } \, i = (0, 0) ,
\\
\frac{4}{h^4} \left( \frac{\pi}{2} - \arctan\left( \frac{1}{h^3} \right)  \right) \quad \, & \text{for } \, i = ( 0 , \pm h  ) ,
\\
\frac{1}{h^4} \left( \arctan\left( \frac{1}{h^3} \right) - \frac{ \pi}{2}  \right) \quad \, & \text{for } \, i = ( 0, \pm 2 h ) ,
\\
0  \quad \, & \text{otherwise in } \, h\Z^2 .
\end{cases}
$$
Recalling that~$\vartheta_\infty^+= \pi/{2}$, we have that
\begin{equation*}
|\vartheta_i^+ - \vartheta_\infty^+|=
\begin{cases}
  \frac{ \pi}{2} - \arctan\left( \frac{1}{h^3} \right)  \quad \, & \text{for } \, i = ( 0 , 0 ) ,
\\
0  \quad \, & \text{otherwise in } \, h\Z^2 .
\end{cases}
\end{equation*}
Thus, the only nonzero term in the summations
defining $\kappa_3^+$, $\kappa_4^+$, $\kappa_5^+$, $\kappa_6^+$,
$\kappa_7^+$ in \eqref{K33} are those for~$i=(0,0)$. To explicitly obtain $\kappa_4^+$, $\kappa_6^+$, $\kappa_7^+$ we will also need to compute
\begin{eqnarray*}
&&| {\mathcal{D}}_1^+\rho_{(0,0)}^+ | = \frac{ 
| (1-h^3)- \sqrt{1+h^6} | }{h}, \qquad 
| {\mathcal{D}}_2^+\rho_{(0,0)}^+ | =
| {\mathcal{D}}_1^- \rho_{(0,0)}^+ | = 
| {\mathcal{D}}_2^- \rho_{(0,0)}^+ | =
 \frac{ \sqrt{1+h^6} -1 }{h} ,
\\
&&
|{\mathcal{D}}_1^{+} ( \rho_{(0,0)}^+  )^2 | =
2 h^2 ,\qquad 
|{\mathcal{D}}_2^{+} ( \rho_{(0,0)}^+  )^2 | =
|{\mathcal{D}}_1^{-} ( \rho_{(0,0)}^+  )^2 | =
|{\mathcal{D}}_2^{-} ( \rho_{(0,0)}^+  )^2 | =
h^5 .
\end{eqnarray*}

Thus, we find that
\begin{equation*}
\kappa_3^+ = 4 (1 + h^6 )\frac{\left( \frac{\pi}{2} - \arctan\left( \frac{1}{h^3} \right)  \right)^4}{h^3} ,
\end{equation*}
which, in light of \eqref{eq:h tra 0 e 1} and \eqref{eq:arctanestimate},
gives that
\begin{equation}\label{eq:ex1:k3}
\kappa_3^+ \le 8 h^9 .
\end{equation}
Furthermore, we have that
\begin{equation}\label{alertenvxsatru-008080}
\kappa_4^+ = \sqrt{ 1 + h^6 } \, \frac{ \left( \frac{\pi}{2} -
 \arctan\left( \frac{1}{h^3} \right)  \right)^3}{h^2} 
\left[ h^2 + 4 \, \frac{\sqrt{1+h^6} - 1}{h}  \right].
\end{equation}
We also recall the inequality
\begin{equation*}
| 1 - \sqrt{1-t} | \le |t|  \, \text{ for every }  -1 \le t \le 1,
\end{equation*}
which, taking~$t = - h^6$, gives that
\begin{equation}\label{eq:squarerootestimate}
| \sqrt{1 + h^6} - 1 | \le | h |^6  .
\end{equation}
{F}rom this, \eqref{eq:h tra 0 e 1}, \eqref{eq:arctanestimate} and~\eqref{alertenvxsatru-008080},
we obtain that
\begin{equation}\label{eq:ex1:k4}
\kappa_4^+ \le 5 \sqrt{2} \, h^9 .
\end{equation}
By recalling \eqref{eq:ex1:kappa0}, we also immediately find that
\begin{equation*}
\kappa_5^+ = 5 \, \sqrt{ 2 }  \left( \frac{\pi}{2} - \arctan\left( \frac{1}{h^3} \right)  \right)  ,
\end{equation*}
and hence, by \eqref{eq:h tra 0 e 1} and \eqref{eq:arctanestimate},
\begin{equation}\label{eq:ex1:k5}
\kappa_5^+ \le 5 \sqrt{2} \, h^3 .
\end{equation}
Furthermore, we compute
\begin{equation*}
\kappa_6^+ =  \left[ 2 h^2 +3 h^5 + \frac{12(1+h^6)}{h^3} \right]  \left( \frac{\pi}{2} - \arctan\left( \frac{1}{h^3} \right)  \right)^2 = \left[ 12+ 2 h^5 +12 h^6 +3 h^8 \right] \frac{\left( \frac{\pi}{2} - \arctan\left( \frac{1}{h^3} \right)  \right)^2}{h^3} ,
\end{equation*}
and hence, by \eqref{eq:h tra 0 e 1} and \eqref{eq:arctanestimate},
\begin{equation}\label{eq:ex1:k6}
\kappa_6^+ \le 29 h^3 .
\end{equation}
Finally, we have that
\begin{equation*}
\kappa_7^+ =  \left[ h^4 + 2 h( \sqrt{1+h^6}-1 ) + 2 \, \left( \frac{\sqrt{1+h^6} - 1}{h} \right)^2 + 2 \, \left( \frac{ 1 - \sqrt{1 - h^6}}{h} \right)^2 \right]  \frac{ \left( \frac{\pi}{2} - \arctan\left( \frac{1}{h^3} \right)  \right)^2}{h} ,
\end{equation*}
and hence, by \eqref{eq:h tra 0 e 1}, 
\eqref{eq:arctanestimate} and~\eqref{eq:squarerootestimate},
\begin{equation}\label{eq:ex1:k7}
\kappa_7^+ \le \left[ h^4 + 2h^7 +2h^{10} +2 h^{10} \right]  h^5 \le 7 h^9 .
\end{equation}

In light of \eqref{eq:h tra 0 e 1}, inequalities \eqref{eq:ex1:k3}, \eqref{eq:ex1:k4}, \eqref{eq:ex1:k5}, \eqref{eq:ex1:k6}, \eqref{eq:ex1:k7} clearly give \eqref{K33}.

All in all, we have that~$u$ and $f$ satisfy all the assumptions of
Theorem \ref{DGDG} and hence \eqref{FORM} holds true.
Nevertheless~$u$ is not one-dimensional, and 
\begin{equation}\label{eq:esempi div da zero}
\sum\limits_{{1\le j\le 2}\atop{i\in h\Z^2}}
(\rho_i^+)^2 \;\Big(
|{\mathcal{D}}_j^+\vartheta^+_i|^2+|{\mathcal{D}}_j^-\vartheta^+_i|^2
\Big) \neq 0.
\end{equation}
We stress that the quantity in the left-hand side of~\eqref{eq:esempi div da zero}
is precisely the one appearing in~\eqref{FORM}, hence the fact that it
is nonzero says that Theorem~\ref{DGDG} cannot be improved
in general by obtaining that such a quantity vanishes.
We also notice that the quantity in the left-hand side of~\eqref{eq:esempi div da zero} can be explicitly computed. Here, we just notice that
\begin{eqnarray*}
 \sum\limits_{{1\le j\le 2}\atop{i\in h\Z^2}}
(\rho_i^+)^2 \;\Big(
|{\mathcal{D}}_j^+\vartheta^+_i|^2+|{\mathcal{D}}_j^-\vartheta^+_i|^2
\Big) 
& \ge &
 (\rho_{(-h,0)}^+)^2 \; \sum\limits_{j=1}^2 \Big(
|{\mathcal{D}}_{j}^+\vartheta^+_{(-h,0)}|^2+|{\mathcal{D}}_j^-\vartheta^+_{(-h,0)}|^2
\Big)
\\
& = & | {\mathcal{D}}_1^+\vartheta^+_{(-h,0)}|^2 
 =  \frac{ \left( \frac{\pi}{2} - \arctan\left( \frac{1}{h^3} \right) \right)^2  }{h^2}
.\end{eqnarray*}
Moreover, the following inequality holds true:
\begin{equation}\label{eq:ex1:inequality serve per ex4 arctan da sotto}
\left| \mathrm{sgn}(t) \frac{\pi}{2} - \arctan(t) \right| 
\ge \frac{4}{\pi \, |t|}  \quad \text{ for every } t \ge 1  ,
\end{equation}
which gives, taking~$t = 1/h^3$,
\begin{equation}\label{eq:nuovastimaarctandasotto}
\left| \mathrm{sgn}\left(\frac1{h^3}\right) \frac{\pi}{2} - \arctan
\left(\frac1{h^3}\right) \right| 
\ge \frac{4 \left| h \right|^3}{\pi}.
\end{equation}
{F}rom this and recalling~\eqref{eq:h tra 0 e 1},
we thus get that
\begin{equation}\label{eq:ex1:stima da sotto}
\sum\limits_{{1\le j\le 2}\atop{i\in h\Z^2}}
(\rho_i^+)^2 \;\Big(
|{\mathcal{D}}_j^+\vartheta^+_i|^2+|{\mathcal{D}}_j^-\vartheta^+_i|^2
\Big) 
 \ge
 \frac{4^2}{\pi^2} \, h^4.
\end{equation}

On the other hand, \eqref{eq:h tra 0 e 1}, \eqref{eq:ex1:k2}, \eqref{eq:ex1:k3}, \eqref{eq:ex1:k4}, \eqref{eq:ex1:k5}, \eqref{eq:ex1:k6}, \eqref{eq:ex1:k7}, inform us that the right-hand side of \eqref{FORM} satisfies
\begin{equation}\label{eq:ex:stimadasopra}
C \, h \le 4 \left[ 57 + 10 \sqrt{2} + 2 \, e^{2 \pi} \left( 8 + 5 \sqrt{2} \right) \right] h^4 ,
\end{equation}
where $C$ is the quantity defined in \eqref{eq:constant C}. 
Thus, by putting together \eqref{FORM}, \eqref{eq:ex1:stima da sotto},
and \eqref{eq:ex:stimadasopra}, it is clear that, in the formal limit as~$
h \searrow 0$, left-hand side and right-hand side of \eqref{FORM}
are both of the order of $h^4$. In this sense, \eqref{FORM} is optimal.
}
\end{example}

We provide other three examples confirming the optimality of \eqref{FORM}.

In particular, in the next two examples left-hand side and right-hand side of \eqref{FORM} are both of the order of $h^2$.
The next two examples also show that an exact symmetry result cannot hold true in the discrete case, even if we restrict our analysis to the case of source terms of the form $f(i,u_i)=\hat{f}(u_i)$ (i.e., only depending on $u$),
satisfying \eqref{hp:GENERALsemilinearitaconspazio}.

Since the functions involved in the next examples are restrictions
of smooth functions defined in the whole of~$\R^2$,
the following fact will be useful.
If $v:h\Z^2 \to \R$ is the restriction on $h\Z^2$ of a smooth function $\overline{v}: \R^2 \to \R$, then, for any $i \in h \Z^2$ and $j=1,2$, we have that
\begin{equation}\label{eq:Lagrangeperestensione}
\mathcal{D}_j^\pm v_i = \partial_{x_j} \overline{v}(i \pm \xi e_j)
\end{equation}
for some~$\xi \in (0,h)$,
\begin{equation}\label{eq:TaylorLagrangelaplacianodirezionale}
\mathcal{D}_j^\pm(\mathcal{D}_j^\pm v)_{i- h e_j} =
 \mathcal{L}_j v_i = \frac{ \partial_{x_j x_j} \overline{v}(i
 \pm \xi_1 e_j) + \partial_{x_j x_j} \overline{v}(i \pm \xi_2 e_j) }{2}
,
\end{equation}
for some~$\xi_1 \in (0, h)$ and~$\xi_2\in (-h, 0)$,
\begin{equation}\label{eq:TaylorLagrangesecondederivate}
\mathcal{D}_j^\pm(\mathcal{D}_j^\pm v)_i = 2 \
\partial_{x_j x_j} \overline{v}(i \pm \xi_1 e_j) -
 \partial_{x_j x_j} \overline{v}(i \pm \xi_2 e_j) ,
\end{equation}
for some~$\xi_1 \in (0, 2h)$ and~$\xi_2 \in (0,h)$, and
\begin{equation}\label{eq:TaylorLagrangeLaplacianoallaseconda}
\mathcal{L}_j^2 v_i =
\frac{ 4 \, \partial_{x_j x_j x_j x_j} \overline{v}(i + \xi_1 e_j) - \partial_{x_j x_j x_j x_j} \overline{v}(i + \xi_2 e_j)  - \partial_{x_j x_j x_j x_j} \overline{v}(i + \xi_3 e_j) + 4 \,
 \partial_{x_j x_j x_j x_j} \overline{v}(i + \xi_4 e_j) }{6}  ,
\end{equation}
for some~$\xi_1 \in (0, 2h)$, $\xi_2 \in (0,h)$, $\xi_3 \in (-h,0)$,
and~$\xi_4 \in (-2h,0)$.

Identity \eqref{eq:Lagrangeperestensione} directly follows from Lagrange theorem. Identities \eqref{eq:TaylorLagrangelaplacianodirezionale}, \eqref{eq:TaylorLagrangesecondederivate} and \eqref{eq:TaylorLagrangeLaplacianoallaseconda} can be easily obtained by using Taylor expansions with Lagrange reminder terms.
From \eqref{eq:Lagrangeperestensione}, \eqref{eq:TaylorLagrangelaplacianodirezionale}, \eqref{eq:TaylorLagrangesecondederivate}, and \eqref{eq:TaylorLagrangeLaplacianoallaseconda}, we easily deduce that
\begin{equation}\label{eq:TAYLORBOUNDSUP FIRSTDERIVATIVE}
| \mathcal{D}_j^\pm v_i | \le \sup_{x \in \R^2} | \partial_{x_j} \overline{v}(x) |  \, , \quad \text{ for any } \, i \in h\Z^2, \, j=1,2,
\end{equation}
\begin{equation}\label{eq:TAYLORBOUNDSUP DIRECTIONALLAPLACIAN}
\left| \mathcal{D}_j^\pm(\mathcal{D}_j^\pm v)_{i- h e_j} \right| = \left| \mathcal{L}_j v_i \right| \le \sup_{x \in \R^2} | \partial_{x_j x_j} \overline{v}(x) | \, , \quad \text{ for any } \, i \in h\Z^2, \, j=1,2,
\end{equation}
\begin{equation}\label{eq:TAYLORBOUNDSUP SECONDDERIVATIVES}
| \mathcal{D}_j^\pm(\mathcal{D}_j^\pm v)_i | \le 3 \, \sup_{x \in \R^2} |\partial_{x_j x_j} \overline{v}(x) | \, , \quad \text{ for any }
 \, i \in h\Z^2, \, j=1,2,
\end{equation}
and
\begin{equation}\label{eq:TAYLORBOUNDSUP DIRECTIONALLAPLACIANSQUARED}
\left| \mathcal{L}_j^2 v_i \right| \le
\frac{ 5 }{3} \, \sup_{x \in \R^2} |\partial_{x_j x_j x_j x_j} \overline{v}(x)| \, , \quad \text{ for any } \, i \in h\Z^2, \, j=1,2.
\end{equation}

\begin{example}\label{example ARCTAN}
{\rm
For any 
\begin{equation}\label{eq:ex con arctan:hsmallness}
0< h \le 1 ,
\end{equation}
and $i=(i_1,i_2) \in h\Z^2$, we consider $u: h\Z^2 \to \R$ defined by
\begin{equation}\label{arqyrhtjuojih587462q4e5rfwt}
u_i := i_2 + \frac{h}{\pi} \, e^{-i_2^2} \arctan(i_1)
\end{equation}
and we set
\begin{eqnarray*}
\tilde{f}(i) &:=& \frac{h}{ \pi } \Bigg\lbrace 
e^{-i_2^2} \left[ \frac{\arctan(i_1+h)+ \arctan(i_1-h) 
-2 \arctan(i_1) }{h^2} \right] \\&&\qquad+ \arctan(i_1) \left[ 
\frac{ e^{-(i_2+h)^2} + e^{-(i_2-h)^2} -
2 e^{- i_2^2} }{h^2} \right] \Bigg\rbrace .
\end{eqnarray*}
In this setting,
\begin{equation}\label{eq:ex arctan:equazione}
{\mathcal{L}}u_i=\tilde f(i) ,
\end{equation}
and hence \eqref{DGEQ} holds true with 
$$
f(i, u_i) := \tilde{f} (i).
$$

At the end of this example, we will also show that there exists a function $\hat{f} :\R \to \R$ such that
\begin{equation}\label{eq:exarctan:FINALEALLAFINE}
\tilde{f}(i)= \hat{f}(u_i)  \quad \text{ for any } i \in h\Z^2 ,
\end{equation}
and hence that $u$ is solution of the equation
\begin{equation*}
{\mathcal{L}} u_i = \hat{f}(u_i) , 
\end{equation*}
where the source term $\hat{f}$ only depends on $u$.

\smallskip

We now show that $f$ satisfies \eqref{hp:GENERALsemilinearitaconspazio} with $L_f^+ = 0$.
For this, we notice that,
for any $i \in h\Z^2$, by Lagrange Theorem, there exist $\xi_1$, $\xi_2 \in (0,h)$ such that
\begin{equation}\label{eq:exarctan:calcolokappa0 1}
\sum_{j=1}^2 \frac{ \left| f(i+h e_j, u_{i+h e_j})-f(i, u_{i}) \right| }{h}
= \sum_{j=1}^2 \frac{ \left| \tilde{f}(i+h e_j)- \tilde{f}(i, ) \right| }{h} = \sum_{j=1}^2 \left| \partial_{x_j}\overline{f}(i+\xi_j e_j) \right| ,
\end{equation}
where $\overline{f}$ denotes the function obtained by extending the definition of $\tilde{f}$
to the whole of~$\R^2$, that is,
\begin{equation*}
\begin{split}
\overline{f}(x) := & \frac{h}{ \pi } \left\{ e^{-x_2^2} \left[ \frac{\arctan(x_1+h)+ \arctan(x_1-h) -2 \arctan(x_1) }{h^2} \right] 
\right.
\\
& \left. 
+ \arctan(x_1) \left[ \frac{ e^{-(x_2+h)^2} + e^{-(x_2-h)^2} -2 e^{- x_2^2} }{h^2} \right] \right\} ,
\end{split}
\end{equation*}
for any $x=(x_1,x_2) \in \R^2$.
Then we directly compute
\begin{equation}\label{eq:exarctan:partial x_1 fsemilin}
\partial_{x_1} \overline{f} (x) =  \frac{h}{ \pi } \left\{ e^{-x_2^2} \left[ \frac{ \frac{1}{ 1+(x_1+h)^2}+ \frac{1}{1+(x_1-h)^2} -2 \frac{1}{1+ x_1^2} }{h^2} \right] 
+ \frac{1}{1+ x_1^2} \left[ \frac{ e^{-(x_2+h)^2} + e^{-(x_2-h)^2}
 -2 e^{- x_2^2} }{h^2} \right] \right\}
\end{equation}
and
\begin{equation}\label{eq:exarctan:partial x_2 f semilin}
\begin{split}
\partial_{x_2} \overline{f}(x) = & \frac{h}{ \pi } \left\{ -2 x_2 e^{-x_2^2} \left[ \frac{\arctan(x_1+h)+ \arctan(x_1-h) -2 \arctan(x_1) }{h^2} \right]
\right.
\\
& \left.  
+ \arctan(x_1) \left[ \frac{ -2 (x_2 +h) e^{-(x_2+h)^2} - 2 (x_2 - h ) e^{-(x_2-h)^2} -2 (-2 x_2) e^{- x_2^2} }{h^2} \right] \right\} .
\end{split}
\end{equation}
Now we recall that for a smooth real function $\overline{v}: \R^2
\to \R$, a Taylor expansion with second order Lagrange reminder term
gives that, fixed $j=1,2$, there exist $\eta_1 \in  (0,h)$ and $\eta_2 \in (-h , 0)$ such that
\begin{equation*}
\frac{\overline{v}(x+h e_j) + \overline{v}(x - h e_j) -2 \overline{v}(x)}{h^2}= \frac{1}{2} \left\lbrace \partial_{x_j x_j} \overline{v} (x+\eta_1 e_j) + \partial_{x_j x_j} \overline{v} (x+\eta_2 e_j) \right\rbrace ,
\end{equation*}
and hence
$$
\left| \frac{\overline{v}(x+h e_j) + \overline{v}(x - h e_j) -2 \overline{v}(x)}{h^2} \right| \le |\partial_{x_j x_j} \overline{v} (x+\eta e_j)| \quad \text{ for some } \, \eta\in (-h,h),
$$
from which in particular we have
\begin{equation}\label{eq:exarctan:Taylorpervreale}
\left| \frac{\overline{v}(x+h e_j) + \overline{v}(x - h e_j) -2 \overline{v}(x)}{h^2} \right| \le \sup_{x \in \R^2} |\partial_{x_j x_j} \overline{v} (x)|  , \quad j=1,2 .
\end{equation}
By setting $\overline{v}(x):= \frac{1}{1+x_1^2}$, we find that
$$
\partial_{x_1 x_1} \overline{v}(x) =2 \, \frac{3x_1^2 -1}{(1+x_1^2)^3} ,
$$
and noting that
$$
\left| \frac{3 t^2 -1}{(1+ t^2)^3} \right| \le 1 \quad \text{ for any } t \in \R ,
$$
by \eqref{eq:exarctan:Taylorpervreale} we get that
\begin{equation}\label{eq:exarctan:perfsemilin 1}
\left| \frac{ \frac{1}{ 1+(x_1+h)^2}+ \frac{1}{1+(x_1-h)^2} -2 \frac{1}{1+ x_1^2} }{h^2} \right| \le 2 .
\end{equation}
Similarly, if we set $\overline{v}(x):= e^{- x_2^2}$, we compute
$$
\partial_{x_2 x_2} \overline{v}(x) = 2 e^{-x_2^2} (2 x_2^2 - 1) ,
$$
and noting that
\begin{equation}\label{eq:exarctan:RICICLATA}
\left| e^{-t^2} (2 t^2 - 1) \right| \le 1 \quad \text{ for any } t \in \R ,
\end{equation}
formula~\eqref{eq:exarctan:Taylorpervreale} informs us that
\begin{equation}\label{eq:exarctan:perfsemilin 2}
\left| \frac{ e^{-(x_2+h)^2} + e^{-(x_2-h)^2} -2 e^{- x_2^2} }{h^2} \right| \le 2.
\end{equation}
By putting together \eqref{eq:exarctan:partial x_1 fsemilin},
\eqref{eq:exarctan:perfsemilin 1} and~\eqref{eq:exarctan:perfsemilin 2}, we thus obtain that
\begin{equation}\label{eq:exarctan:partialx_1 boundsemilin}
\left| \partial_{x_1} \overline{f}(x) \right| \le \frac{4}{\pi} \, h , \quad \text{ for any } \, x \in \R^2 .
\end{equation}

In order to obtain a similar estimate for $\partial_{x_2} \overline{f}$, we now set $\overline{v}(x):= \arctan(x_1)$
and we compute
$$
\partial_{x_1 x_1} \overline{v}(x) = - \frac{2 x_1}{(1+x_1^2)^2} ,
$$
and noting that
$$
\left| \frac{t}{(1+ t^2)^2} \right| \le \frac{ \sqrt{27} }{16} \left( < \frac{1}{2} \right) \quad \text{ for any } t \in \R ,
$$
by \eqref{eq:exarctan:Taylorpervreale} we get that
\begin{equation}\label{eq:exarctan:perfsemilin 3}
\left|   \frac{\arctan(x_1+h)+ \arctan(x_1-h) -2 \arctan(x_1) }{h^2} \right| < 1 .
\end{equation}
Similarly, if we set 
$\overline{v}(x):= -2 x_2 e^{-x_2^2}$, we compute
$$
\partial_{x_2 x_2} \overline{v}(x) = 4 e^{-x_2^2} x_2 (3 - 2x_2^2) ,
$$
and noting that
$$
\left| e^{-t^2} t (3 - 2t^2) \right| <1 \quad \text{ for any } t \in \R ,
$$
by \eqref{eq:exarctan:Taylorpervreale} we get that
\begin{equation}\label{eq:exarctan:perfsemilin 4}
\left| \frac{ -2 (x_2 +h) e^{-(x_2+h)^2} - 2 (x_2 - h ) e^{-(x_2-h)^2} -2 (-2 x_2) e^{- x_2^2} }{h^2}  \right| < 4 .
\end{equation}
By putting together \eqref{eq:exarctan:partial x_2 f semilin},
\eqref{eq:exarctan:perfsemilin 3} and \eqref{eq:exarctan:perfsemilin 4},
and using
the inequality
\begin{equation}\label{eq:exarctan:disuguaglianzaper t e^(-t^2)}
\left| t e^{-t^2} \right| \le \frac{1}{\sqrt{2\, e}} \left( < \frac{1}{2} \right) \quad \text{ for any } \, t \in \R ,
\end{equation}
and the bound
\begin{equation}\label{eq:exarctan:boundsuarctan}
|\arctan(t)| \le \frac\pi{2} \quad \text{ for any } \, t \in \R ,
\end{equation}
we obtain that
\begin{equation*}
\left| \partial_{x_2} \overline{f}(x) \right| \le \left( \frac{1}{\pi} + 2 \right)  h , \quad \text{ for any } \, x \in \R^2 .
\end{equation*}
{F}rom this,
\eqref{eq:exarctan:calcolokappa0 1}
and~\eqref{eq:exarctan:partialx_1 boundsemilin}, we thus obtain
$$
\sum_{j=1}^2 \frac{ \left| f(i+h e_j, u_{i+h e_j})-f(i, u_{i}) \right| }{h} < \left( \frac{5}{\pi} + 2 \right)h ,
$$
that is, \eqref{hp:GENERALsemilinearitaconspazio} holds true with $L_f^+ = 0$ and 
\begin{equation}\label{EQ:EXARCTAN KAPPA_0}
\kappa_0^+ = \frac{5}{\pi} + 2 .
\end{equation} 

We notice that, being $h>0$, $u_i$ is increasing in $i_2$,
that is, \eqref{MONO} holds true. Indeed, 
by recalling \eqref{eq:exarctan:boundsuarctan}
and the fact that~$|e^{-(i_2+h)^2} - e^{-i_2^2}|
 \le \max \left\lbrace e^{-i_2^2} , e^{-(i_2+h)^2} \right\rbrace \le 1$,
we see that
\begin{equation}\label{eq:exarctan:USOQUESTAPERSCREMARE}
\left| \frac{\arctan(i_1)}{\pi} \left( e^{-(i_2+h)^2} - e^{-i_2^2} \right) \right| \le \frac{1}{2}.
\end{equation}
{F}rom this and~\eqref{arqyrhtjuojih587462q4e5rfwt}, one finds that
$$
u_{i+h e_2} - u_i= h \left[ 
1 + \frac{\arctan(i_1)}{\pi} \left( e^{-(i_2+h)^2} 
- e^{-i_2^2} \right) \right] \ge \frac{h}{2} > 0,
$$ 
which proves~\eqref{MONO}.
In particular, \eqref{hp:graddiversodazero} surely holds true.

By direct inspection, we also check that \eqref{BOULI}, \eqref{K2}, \eqref{K33} are all satisfied, and hence Theorem \ref{DGDG} applies.
To this end, we start by computing
$$
{\mathcal{D}}_1^+ u_i= \frac{h}{\pi} e^{-i_2^2} \left[ \frac{\arctan(i_1+h) - \arctan(i_1)}{h} \right]
\qquad{\mbox{and}}\qquad
{\mathcal{D}}_2^+ u_i= 1 + \frac{h}{\pi} \arctan(i_1)  \left[ \frac{ e^{-(i_2+h)^2} - e^{-i_2^2} }{h} \right] .
$$
By Lagrange Theorem, for any $t \in \R$ we have that
\begin{equation*}
\frac{\arctan(t+h) - \arctan(t)}{h}= \frac{1}{1+(t + \xi)^2} , \quad \text{ for some } \, \xi \in (0,h) ,
\end{equation*}
and hence
\begin{equation*}
{\mathcal{D}}_1^+ u_i =
 \frac{h}{\pi} \left[ \frac{e^{-i_2^2}}{1+ (i_1+\xi)^2} \right].
\end{equation*}
{F}rom this, we get
\begin{equation}\label{eq:exarctan:Boundderivata in i_1 con h}
0 < {\mathcal{D}}_1^+ u_i \le \frac{h}{\pi} \quad \text{ for any } \, i \in h\Z^2 ,
\end{equation}
and therefore, by \eqref{eq:ex con arctan:hsmallness}, 
\begin{equation}\label{eq:exarctan:bounds derivata in i_1}
0 < {\mathcal{D}}_1^+ u_i \le \frac{1}{\pi} \quad \text{ for any } \, i \in h\Z^2 .
\end{equation}
By \eqref{eq:exarctan:USOQUESTAPERSCREMARE} we also have that
\begin{equation}\label{eq:exarctan:bounds derivata in i_2}
\frac{1}{2} < {\mathcal{D}}_2^+ u_i < \frac{3}{2} \quad \text{ for any } \, i \in h\Z^2 .
\end{equation}
By putting together \eqref{eq:exarctan:bounds derivata in i_1}
and \eqref{eq:exarctan:bounds derivata in i_2} we
obtain that \eqref{BOULI} holds true with $\kappa_1^+ = \frac{3}{2}$.

Being $\mathcal{D}_1^+ $ and $\mathcal{D}_2^+$ positive, for any $i \in h\Z^2$ we have that
\begin{equation}\label{eq:exarctan:defvarthetainfty}
\vartheta_i^+= \arctan \left[ \frac{\mathcal{D}_2^+ u_i}{\mathcal{D}_1^+ u_i } \right] = 
\arctan \left[ \frac{1 + \frac{h}{\pi} \arctan(i_1)  \left( \frac{ e^{-(i_2+h)^2} - e^{-i_2^2}  }{h} \right) }{ \frac{h}{\pi} e^{-i_2^2} \left( \frac{\arctan(i_1+h) - \arctan(i_1)}{h} \right) } \right] .
\end{equation}
Let us now prove \eqref{K2}. Since $\mathcal{D}_1^+u$, $\mathcal{D}_2^+ u$, and $\vartheta^+$
can be seen as restrictions to $h\Z^2$ of
smooth functions of $\R^2$, it is convenient to define, for $x=(x_1,x_2)\in \R^2$:
\begin{equation}\label{eq:EXARCTAN: DEF OVERLINE U_1 U_2}
\begin{split}\overline{u}_1 (x) &:= \frac{h}{\pi} e^{-x_2^2}
 \left[ \frac{\arctan(x_1+h) - \arctan(x_1)}{h} \right] ,
\\
\overline{u}_2 (x)& := 1 + \frac{h}{\pi} \arctan(x_1)  \left[ \frac{ e^{-(x_2+h)^2} - e^{-x_2^2} }{h} \right] ,
\\{\mbox{and }}\quad
\overline{\vartheta}(x)& := \arctan \left( \frac{\overline{u}_2 (x)}{\overline{u}_1 (x)} \right) .
\end{split}\end{equation}
With these definitions, the restrictions of ${\overline{u}_1}, {\overline{u}_2}$, and $\overline{\vartheta}$ to $h\Z^2$ coincide with $\mathcal{D}_1^+u$, $\mathcal{D}_2^+ u$, and $\vartheta^+$.

It is easy to check that the estimates obtained in \eqref{eq:exarctan:Boundderivata in i_1 con h} and \eqref{eq:exarctan:bounds derivata in i_2} for $\mathcal{D}_1^+$ and $\mathcal{D}_2^+$ still hold true for $\overline{u}_1$ and $\overline{u}_2$, that is
\begin{equation}\label{eq:exarctan:OVERLINEU1}
0 < \overline{u}_1 (x) \le \frac{h}{\pi} \quad \text{ for any } \, x \in \R^2 ,
\end{equation}
and
\begin{equation}\label{eq:exarctan:OVERLINEU2}
\frac{1}{2} < \overline{u}_2 (x) < \frac{3}{2} \quad \text{ for any } \, x \in \R^2 .
\end{equation}

Let us now find a useful estimate for $\left| \mathcal{D}_j^\pm \vartheta_i^+ \right|$, for $i \in h\Z^2$ and $j = 1, 2$.
By Lagrange Theorem, for any given $i \in h \Z^2$, there exists $\xi \in (0,h)$ such that
\begin{equation}\label{eq:exarctan:LAGRANGEDJ}
\left| \mathcal{D}_j^\pm \vartheta_i^+ \right| = \left| \partial_{x_j}\overline{\vartheta}(i + \xi e_j) \right| .
\end{equation}
Let us now compute
\begin{equation}\label{EQ:LB EXARCTAN SERVE 1}
\partial_{x_j} \overline{\vartheta} = \frac{ ( \partial_{x_j}\overline{u}_2) \, \overline{u}_1 - \overline{u}_2 \, ( \partial_{x_j} \overline{u}_1 ) }{\overline{u}_1^2 + \overline{u}_2^2} .
\end{equation}
Since by \eqref{eq:exarctan:OVERLINEU1} and \eqref{eq:exarctan:OVERLINEU2}, it holds that
\begin{equation}\label{eq:exarctan:U1^2+U2^2}
\overline{u}_1^2+\overline{u}_2^2 \ge \frac{1}{4} ,
\end{equation}
we find that
\begin{equation}\label{eq:exarctan:STIMAPARTIAL1VARTHETA}
\left| \partial_{x_j} \overline{\vartheta} \right| \le 4 \left| ( \partial_{x_j}\overline{u}_2) \, \overline{u}_1 - \overline{u}_2 \, ( \partial_{x_j} \overline{u}_1 ) \right| .
\end{equation}
By Lagrange Theorem, for any given $x \in \R^2$ there exists  $\xi \in (0,h)$ such that
\begin{equation*}
\overline{u}_1 (x)= \frac{h}{\pi} \frac{e^{-x_2^2}}{1+(x_1+\xi)^2} ,
\end{equation*}
and hence, by using that, for any $\xi \in (0,h) \subseteq (0,1)$
\begin{equation}\label{eq:exarctan:stima derivata arctan +xi}
\frac{1}{1+(t+\xi)^2} \le \frac{3 + \sqrt{5}}{2} \, \frac{1}{1+t^2} \left( <  \frac{3}{1+t^2} \right) \quad \text{ for any } \, t \in \R ,
\end{equation}
we find that
\begin{equation}\label{eq:exarctan:STIMAU1}
\left| \overline{u}_1 (x) \right| \le \frac{3 h}{\pi} \frac{e^{-x_2^2}}{1+ x_1^2} \quad \text{ for any } x \in \R^2 .
\end{equation}
By means of straightforward computations and in light of Lagrange Theorem, from \eqref{eq:EXARCTAN: DEF OVERLINE U_1 U_2} we find that
\begin{equation}\label{eq:exarctan:PA1U1}
\partial_{x_1} \overline{u}_1 (x)= \frac{h}{\pi} e^{-x_2^2} \left[ \frac{ \frac{1}{ 1+(x_1+h)^2 } - \frac{1}{  1 + x_1^2  } }{h} \right] = \frac{h}{\pi} e^{-x_2^2} \left[  - \frac{2(x_1+ \xi)}{ \left( 1+(x_1+\xi)^2 \right)^2 } \right] \quad \text{ for some } \, \xi \in (0,h)
,\end{equation}
\begin{equation}\label{eq:exarctan:PA2U1}
\partial_{x_2} \overline{u}_1 (x)= \frac{h}{\pi} (-2 x_2)e^{-x_2^2} \left[  \frac{1}{1+(x_1+ \xi )^2}  \right]
\quad \text{ for some } \, \xi \in (0,h),
\end{equation}
\begin{equation}\label{eq:exarctan:PA1U2}
\partial_{x_1} \overline{u}_2 (x) =  \frac{h}{\pi} \frac{1}{1 + x_1^2}  \left[ \frac{ e^{-(x_2+h)^2} - e^{-x_2^2} }{h} \right] = \frac{h}{\pi} \frac{1}{1 + x_1^2}  \left[ - 2( x_2+ \xi )  e^{-(x_2+ \xi )^2} \right]  \quad \text{ for some } \, \xi \in (0,h)
,\end{equation}
and
\begin{equation}\label{eq:exarctan:PA2U2}
\begin{split}
\partial_{x_2} \overline{u}_2 (x) 
= &
\frac{h}{\pi} \arctan (x)  \left[ \frac{ -2 (x_2+ h ) e^{-(x_2+ h )^2} - ( -2 x_2) e^{-x_2^2} }{h} \right] 
\\
 = &
\frac{h}{\pi} \arctan (x)  \left[  -2  e^{-(x_2+ \xi )^2} + 4 (x_2 +\xi)^2 e^{-(x_2 + \xi )^2}  \right] 
\quad \text{ for some } \, \xi \in (0,h) .
\end{split}
\end{equation}

By using \eqref{eq:exarctan:PA1U1} and that, for any $\xi \in (0,h) \subseteq (0,1)$ it holds that
$$
\frac{ \left| t+\xi \right| }{(1+(t+\xi)^2)^2}
\le \frac{2}{1+t^2} \quad \text{ for any } t \in \R ,
$$
we get
\begin{equation}\label{eq:exarctan:NuovaPA1U1}
\left| \partial_{x_1} \overline{u}_1 (x) \right| \le \frac{4 h}{\pi} \frac{e^{-x_2^2}}{1+x_1^2}  \quad \text{ for any } \, x \in \R^2.
\end{equation}
Moreover, by using \eqref{eq:exarctan:stima derivata arctan +xi} and~\eqref{eq:exarctan:PA2U1}, and that
$$
\left| t \right| e^{-t^2} \le e^{-\frac{t^2}{2}} \quad \text{ for any } \, t \in \R ,
$$
we get that
\begin{equation}\label{eq:exarctan:NuovaPA2U1}
|\partial_{x_2} \overline{u}_1 (x)|= \frac{6h}{\pi} \frac{e^{-\frac{x_2^2}{2}}}{1+x_1^2} \quad \text{ for any } \, x \in \R^2.
\end{equation}
Also, by~\eqref{eq:exarctan:disuguaglianzaper t e^(-t^2)}
and~\eqref{eq:exarctan:PA1U2}, we have
\begin{equation}\label{eq:exarctan:NuovaPA1U2}
\left| \partial_{x_1} \overline{u}_2 (x) \right| \le \frac{2h}{\pi}
\end{equation}
and, by \eqref{eq:exarctan:RICICLATA}, \eqref{eq:exarctan:boundsuarctan},
and~\eqref{eq:exarctan:PA2U2},
we get
\begin{equation}\label{eq:exarctan:NuovaPA2U2}
\left| \partial_{x_2} \overline{u}_2 (x) \right| \le h
\end{equation}
By putting together \eqref{eq:ex con arctan:hsmallness},
\eqref{eq:exarctan:OVERLINEU2},
\eqref{eq:exarctan:STIMAPARTIAL1VARTHETA},
\eqref{eq:exarctan:STIMAU1},
\eqref{eq:exarctan:NuovaPA1U1}, \eqref{eq:exarctan:NuovaPA2U1},
\eqref{eq:exarctan:NuovaPA1U2}, \eqref{eq:exarctan:NuovaPA2U2},
and the trivial inequality
$$
e^{-t^2} \le e^{-\frac{t^2}{2}} \quad \text{ for any } t \in \R
$$
we find that, there exists a universal finite positive constant
$c$ (independent of~$h$ and~$x$) such that
\begin{equation}\label{eq:exarctan PREPARTI}
\left| \partial_{x_j}\overline{\vartheta}(x) \right| \le c  \, \frac{e^{-\frac{x_2^2}{2}}}{1+x_1^2} \, h \quad \text{ for any } x \in \R^2 , \, \, j = 1,2 .
\end{equation}

By recalling \eqref{eq:exarctan:stima derivata arctan +xi} and using that, for any $\xi \in (0,h) \subseteq (0,1)$
%
%
$$
e^{-\frac{(t+\xi)^2}{2}} \le \sqrt{e} \, e^{-\frac{t^2}{4}} \quad \text{ for any } t \in \R ,
$$
from \eqref{eq:exarctan:LAGRANGEDJ} and \eqref{eq:exarctan PREPARTI} we thus obtain that 
\begin{equation}\label{eq:exarctan:DJ STIMA PER CONVERGENZA}
\left| \mathcal{D}_j^\pm \vartheta_i^+ \right| \le c  \, \frac{e^{-\frac{i_2^2}{4}}}{1+i_1^2} \, h \quad \text{ for any } i \in h\Z^2, \, \, j=1,2,
\end{equation}
where $c$ is a universal finite positive constant (independent of $h$ and $i$).

We now claim that, there exists a universal finite positive constant $c$
(independent of $h$ and $x$) such that, for any $x \in \R^2$,
\begin{equation}\label{EQ:EXARCTAN:PROVA1}
|\overline{u}_1 (x)| = \overline{u}_1 \le c , \quad
|\overline{u}_2 (x)| = \overline{u}_2 \le c   ,
\end{equation}
\begin{equation}\label{EQ:EXARCTAN:PROVA2}
|\partial_{x_j} \overline{u}_k (x)| \le c \, h  ,  \quad {\mbox{ for any }}
j,k \in \left\lbrace 1,2 \right\rbrace,
\end{equation}
$$
|\partial_{x_j x_j} \overline{u}_k (x)| \le c \, h , \quad
|\partial_{x_j x_j x_j} \overline{u}_k (x)| \le c \, h , \quad
|\partial_{x_j x_j x_j x_j} \overline{u}_k (x)| \le c \, h  , \quad {\mbox{ for any }}
j,k \in \left\lbrace 1,2 \right\rbrace ,
$$
and hence, in light of \eqref{eq:ex con arctan:hsmallness},
\begin{equation}\label{eq:exarctan:BOUNDEDNESS U_j}
|\overline{u}_k (x)| \le c , \quad
|\partial_{x_j} \overline{u}_k (x)| \le c , \quad
|\partial_{x_j x_j} \overline{u}_k (x)| \le c  , \quad
|\partial_{x_j x_j x_j} \overline{u}_k (x)| \le c  , \quad
|\partial_{x_j x_j x_j x_j} \overline{u}_k (x)| \le c ,
\end{equation}
for any $x \in \R^2$ and $j,k \in \left\lbrace 1,2 \right\rbrace$.
The estimates \eqref{EQ:EXARCTAN:PROVA1} have been proved in \eqref{eq:exarctan:OVERLINEU1} and \eqref{eq:exarctan:OVERLINEU2}, while those in \eqref{EQ:EXARCTAN:PROVA2} clearly follow from
\eqref{eq:exarctan:NuovaPA1U1}, \eqref{eq:exarctan:NuovaPA2U1}, 
\eqref{eq:exarctan:NuovaPA1U2}, and~\eqref{eq:exarctan:NuovaPA2U2}. The estimates for the higher order derivatives follow by similar
straightforward computations.  


In light of \eqref{eq:exarctan:BOUNDEDNESS U_j}
and using \eqref{eq:exarctan:U1^2+U2^2}, straightforward computations
lead to find a universal positive constant (independent of $x$ and $h$) such that
\begin{equation}\label{eq:exarctan:STIMA DJJ THETA}
| \partial_{x_j x_j} \overline{\vartheta}(x)| \le c  , \quad
\text{ for any } \, x \in \R^2 {\mbox{ and }} j=1,2 ,
\end{equation}
and
\begin{equation}\label{eq:exarctan:STIMA DJJJ THETA}
| \partial_{x_j x_j x_j x_j} \overline{\vartheta}(x) | \le c  , 
\quad \text{ for any } \, x \in \R^2 {\mbox{ and }} j=1,2 .
\end{equation}
By putting together \eqref{eq:TAYLORBOUNDSUP DIRECTIONALLAPLACIAN} (with $v := \vartheta^+$ and $\overline{v} := \overline{\vartheta}$) and \eqref{eq:exarctan:STIMA DJJ THETA}, we thus find
\begin{equation}\label{eq:exarctan:DJJ Directional STIMA PER Convergenza}
\left| \mathcal{D}_j^\pm(\mathcal{D}_j^\pm \vartheta^+ )_{i- h e_j}
 \right| = \left| \mathcal{L}_j \vartheta_i^+ \right| \le c \, , 
\quad \text{ for any } \, i \in h\Z^2 {\mbox{ and }} j=1,2 .
\end{equation}

By using \eqref{K2}, \eqref{eq:exarctan:DJ STIMA PER CONVERGENZA}, \eqref{eq:exarctan:DJJ Directional STIMA PER Convergenza}, and the fact that by \eqref{eq:exarctan:bounds derivata in i_1} and \eqref{eq:exarctan:bounds derivata in i_2} it holds that
\begin{equation}\label{eq:exarctan:BOUND SU RHO^2}
(\rho_i^+)^2 \le 
\frac{4+ 9 \pi^2}{4 \pi^2},
\end{equation}
we thus find that
\begin{equation}\label{EQ:EXARCTAN: KAPPA 2}
\kappa_2^+:=
\sum_{{1\le j\le 2}\atop{i\in h\Z^2}}
(\rho_i^+)^2 \,\Big(
|{\mathcal{D}}_j^+\vartheta^+_i|\,|{\mathcal{D}}_j^+ ({\mathcal{D}}_j^+ \vartheta^+)_{i-he_j}|+
|{\mathcal{D}}_j^-\vartheta^+_i|\,|{\mathcal{D}}_j^- ({\mathcal{D}}_j^- \vartheta^+)_{i+he_j}|
\Big) \le c \, \left( \sum_{{1\le j\le 2}\atop{i\in h\Z^2}} \frac{e^{- \frac{i_2^2}{4}}}{1+i_1^2} \right) \, h  ,
\end{equation}
where the letter $c$ denotes a universal positive constant (independent
of $i$ and $h$).
Since the series in the brackets clearly converges and \eqref{eq:ex con arctan:hsmallness} holds true, \eqref{K2} is verified.

In order to prove \eqref{K33}, we set $\vartheta^+_\infty := {\pi}/{2}$. With this choice, by putting
together~\eqref{eq:arctanestimate},
\eqref{eq:exarctan:bounds derivata in i_2} and~\eqref{eq:exarctan:defvarthetainfty}, we get
\begin{equation*}
\left| \vartheta^+_\infty - \vartheta^+_i \right| \le
\frac{\mathcal{D}_1^+ u_i }{\mathcal{D}_2^+ u_i}
\le 
2 \, \mathcal{D}_1^+ u_i .
\end{equation*}
{F}rom this, by recalling \eqref{eq:exarctan:STIMAU1} and that $\mathcal{D}_1^+ u_i = \overline{u}_1(i)$ for any $i \in h\Z^2$,
%
%
we obtain
\begin{equation}\label{eq:exarctan:bound vartheta - vartheta_infty}
\left| \vartheta^+_\infty - \vartheta^+_i \right| \le
\frac{6 h}{\pi} \frac{e^{-i_2^2}}{1+ i_1^2} \, .
\end{equation}

To verify \eqref{K33}, it remains
to check that all the other terms appearing in \eqref{K33} remain bounded.
To this aim, we define
$$
\overline{\rho}(x) := \sqrt{ \overline{u}_1^2 (x) + \overline{u}_2^2 (x) } ,
$$
and we claim that 
\begin{equation}\label{EQ:EXARCTAN:DJ RHO}
\left| \partial_{x_j} \overline{\rho} (x) \right| \le c , \quad
\text{ for any } x \in \R^2
\end{equation}
and
\begin{equation}\label{EQ:EXARCTAN:DJ RHO^2}
\left| \partial_{x_j} \overline{\rho}^2 (x) \right| \le c , \quad \text{ for any } x \in \R^2
,\end{equation}
where $c$ is a positive finite universal constant (independent of $x$ and $h$).
Indeed, we compute that
$$
\partial_{x_j} \overline{\rho}^2 = 2 \left[ \overline{u}_1 \, ( \partial_{x_j} \overline{u}_1) + \overline{u}_2 \, ( \partial_{x_j} \overline{u}_2) \right] ,
$$
and therefore \eqref{EQ:EXARCTAN:DJ RHO^2}
follows by \eqref{eq:exarctan:BOUNDEDNESS U_j}.
We also compute
$$
\partial_{x_j} \overline{\rho} = \frac{\overline{u}_1 \, ( \partial_{x_j} \overline{u}_1) + \overline{u}_2 \, ( \partial_{x_j} \overline{u}_2) }{\sqrt{\overline{u}_1^2  + \overline{u}_2^2 }} ,
$$
from which, by recalling \eqref{eq:exarctan:U1^2+U2^2}, we find
$$
| \partial_{x_j} \overline{\rho} | \le 2 \left| \overline{u}_1 \, ( \partial_{x_j} \overline{u}_1) + \overline{u}_2 \, ( \partial_{x_j} \overline{u}_2) \right| ,
$$
and hence
\eqref{EQ:EXARCTAN:DJ RHO} follows by \eqref{eq:exarctan:BOUNDEDNESS U_j}.

By using \eqref{eq:TAYLORBOUNDSUP FIRSTDERIVATIVE}
with $v:= \rho$, from \eqref{EQ:EXARCTAN:DJ RHO} we get
\begin{equation}\label{EQ:EXARCTAN:DISCRETO DJ RHO}
\left| \mathcal{D}_j^\pm \rho_i \right| \le c , \quad \text{ for any } \, i
 \in h\Z^2 {\mbox{ and }} j=1,2 ,
\end{equation}
and from \eqref{EQ:EXARCTAN:DJ RHO^2} we get
\begin{equation}\label{EQ:EXARCTAN:DISCRETO DJ RHO^2}
\left| \mathcal{D}_j^\pm (\rho_i^2) \right| \le c , 
\quad \text{ for any } \, i \in h\Z^2 {\mbox{ and }} j=1,2 .
\end{equation}

By putting together \eqref{eq:TAYLORBOUNDSUP SECONDDERIVATIVES}
(used here with $v := \vartheta^+$ and $\overline{v}:=\overline{\vartheta}$) and \eqref{eq:exarctan:STIMA DJJ THETA},
we find
\begin{equation}\label{EQ:EXARCTAN:DJJ THETA DISCRETO BOUNDEDNESS}
\left| \mathcal{D}_j^\pm(\mathcal{D}_j^\pm \vartheta^+ )_{i}
 \right| \le c \, , \quad \text{ for any } \, i \in h\Z^2 {\mbox{ and }} j=1,2 .
\end{equation}
Furthermore, by~\eqref{eq:TAYLORBOUNDSUP DIRECTIONALLAPLACIANSQUARED} (with $v := \vartheta^+$ and $\overline{v} := \overline{\vartheta}$) and \eqref{eq:exarctan:STIMA DJJJ THETA}, we also find
\begin{equation}\label{EQ:EXARCTAN:DJJJJ THETA DISCRETO BOUNDEDNESS}
\left| \mathcal{L}_j^2 \vartheta_{i}^+ \right| \le c \, , 
\quad \text{ for any } \, i \in h\Z^2 {\mbox{ and }} j=1,2 .
\end{equation}
Finally, we notice that the estimate
in~\eqref{eq:exarctan:DJ STIMA PER CONVERGENZA} gives
that~$\left| \mathcal{D}_j^{\pm} \vartheta_{i} \right| \le c \,h$,
and hence, by \eqref{eq:ex con arctan:hsmallness},
\begin{equation}\label{EXARCTAN:PPPPPPPPPP}
\left| \mathcal{D}_j^{\pm} \vartheta_{i} \right| \le c \, , 
\quad \text{ for any } \, i \in h\Z^2 {\mbox{ and }} j=1,2 .
\end{equation}

By putting together
\eqref{EQ:EXARCTAN KAPPA_0},
\eqref{eq:exarctan:BOUND SU RHO^2},
\eqref{eq:exarctan:bound vartheta - vartheta_infty},
\eqref{EQ:EXARCTAN:DISCRETO DJ RHO},
\eqref{EQ:EXARCTAN:DISCRETO DJ RHO^2},
\eqref{EQ:EXARCTAN:DJJ THETA DISCRETO BOUNDEDNESS},
\eqref{EQ:EXARCTAN:DJJJJ THETA DISCRETO BOUNDEDNESS},
and~\eqref{EXARCTAN:PPPPPPPPPP}, and
recalling the definitions of~$\kappa^+_m$
in~\eqref{K33} we conclude that
\begin{equation}\label{EQ:EXARCTAN:KAPPA 3,4,5,6,7}
\kappa_m^+ \le c \left( \sum_{{1\le j\le 2}\atop{i\in h\Z^2}} 
\frac{e^{-i_2^2}}{1+ i_1^2} \right) \, h , \quad {\mbox{ for any }} m=3,4,5,6,7 ,
\end{equation}
where the letter $c$ denotes a universal positive
constant (independent of $i$ and $h$). Since the series
in the brackets clearly converges and \eqref{eq:ex con arctan:hsmallness} holds true, we obtain that~\eqref{K33} is satisfied.

All in all, we have that~$u_i$ and $f$ satisfy all the assumptions of Theorem \ref{DGDG}.
Nevertheless, $u_i$ is not one-dimensional and \eqref{eq:esempi div da zero} holds true.
More precisely, we can prove that there exists a universal positive
constant~$c_0$ (independent of $i$ and $h$) such that
\begin{equation}\label{eq:EXARCTAN:LOWERBOUNDTHM}
\sum_{{1\le j\le 2}\atop{i\in h\Z^2}}
(\rho_i^+)^2 \;\Big(
|{\mathcal{D}}_j^+\vartheta^+_i|^2+|{\mathcal{D}}_j^-\vartheta^+_i|^2
\Big) \ge c_0 \, h^2 .
\end{equation}
To prove \eqref{eq:EXARCTAN:LOWERBOUNDTHM}, we take
\begin{equation}\label{EQ:LB SET I_1HAT: EXARCTAN}
\hat{i}_1 \in h\Z \cap \left[-4 , -3 \right]
\end{equation}
and we notice that, by \eqref{eq:exarctan:bounds derivata in i_2},
$$
(\rho_{i}^+)^2 \ge \frac{1}{4} \quad \text{ for any } \, i \in h\Z^2.
$$
Accordingly,
\begin{equation}\label{EQ:LB EXARCTAN STEP 1}
\sum_{{1\le j\le 2}\atop{i\in h\Z^2}}
(\rho_i^+)^2 \;\Big(
|{\mathcal{D}}_j^+\vartheta^+_i|^2+|{\mathcal{D}}_j^-\vartheta^+_i|^2
\Big) \ge  (\rho_{( \hat{i}_1 ,0)}^+)^2 \;\Big(
|{\mathcal{D}}_1^+\vartheta^+_{( \hat{i}_1 ,0) }|^2
\Big) \ge 4 \, |{\mathcal{D}}_1^+\vartheta^+_{( \hat{i}_1 ,0) }|^2 .
\end{equation}
Also, by using Lagrange Theorem, we have that
\begin{equation}\label{EQ:LB EXARCTAN STEP 2}
{\mathcal{D}}_1^+\vartheta^+_{( \hat{i}_1 ,0) } =
 \partial_{x_1} \overline{\vartheta} ( \hat{i}_1 + \eta ,0) 
\quad \text{ for some } \, \eta \in (0,h).
\end{equation}
By putting together \eqref{EQ:LB EXARCTAN SERVE 1}, the second equality in
\eqref{eq:exarctan:PA1U1},
and the first equality in
\eqref{eq:exarctan:PA1U2} (with $x = ( \hat{i}_1 + \eta ,0)$), we thus compute
$$
\partial_{x_1} \overline{\vartheta} ( \hat{i}_1 + \eta,0)= - \frac{      
\left\lbrace \frac{h}{\pi} \frac{1}{1 + ( \hat{i}_1 +  \eta)^2}  \left[ \frac{ 1 - e^{-h^2}  }{h} \right] \right\rbrace
\, \overline{u}_1 ( \hat{i}_1 +  \eta,0)
+
\overline{u}_2 ( \hat{i}_1 +  \eta,0) \,
\left\lbrace \frac{2 \, h}{\pi} \left[   \frac{ - ( \hat{i}_1 +  \eta + \xi)}{ \left( 1+( \hat{i}_1 +  \eta +\xi)^2 \right)^2 } \right]  \right\rbrace }{\overline{u}_1^2 ( \hat{i}_1 + \eta,0) +\overline{u}_2^2 ( \hat{i}_1 + \eta,0) } ,
$$
where $\xi \in (0,h)$ is that appearing in \eqref{eq:exarctan:PA1U1}
and $\eta \in (0,h)$ is that appearing in \eqref{EQ:LB EXARCTAN STEP 2}.
By noting that, in light of \eqref{eq:ex con arctan:hsmallness} and \eqref{EQ:LB SET I_1HAT: EXARCTAN}, the terms in the braces are
non-negative, and using the lower bounds
in \eqref{eq:exarctan:OVERLINEU1} and \eqref{eq:exarctan:OVERLINEU2},
we obtain that
\begin{equation}\label{EQ:LB EXARCTAN STEP FORSE}
\begin{split}
\left| \partial_{x_1} \overline{\vartheta} ( \hat{i}_1 +  \eta,0) \right|  = &
\frac{      
\left\lbrace \frac{h}{\pi} \frac{1}{1 + ( \hat{i}_1 + \eta)^2}  \left[ \frac{ 1 - e^{-h^2}  }{h} \right] \right\rbrace
\, \overline{u}_1 ( \hat{i}_1 +  \eta,0)
+
\overline{u}_2 ( \hat{i}_1 +  \eta,0) \,
\left\lbrace \frac{2 \, h}{\pi} \left[   \frac{ - ( \hat{i}_1 +  \eta + \xi)}{ \left( 1+( \hat{i}_1 +  \eta +\xi)^2 \right)^2 } \right]  \right\rbrace }{\overline{u}_1^2 ( \hat{i}_1 + \eta,0) +\overline{u}_2^2 ( \hat{i}_1 + \eta,0) }
\\
 & \ge 
\frac{
\frac{h}{ \pi}\cdot
\frac{ - ( \hat{i}_1 +  \eta + \xi)}{ \left( 1+( \hat{i}_1 +  \eta +\xi)^2
 \right)^2 }}{\overline{u}_1^2 ( \hat{i}_1 + \eta,0)  +\overline{u}_2^2 ( \hat{i}_1 + \eta,0) } 
\\
& \ge 
\frac{4 \pi \, h}{ 4 + 9 \pi^2 } \cdot
 \frac{ - ( \hat{i}_1 +  \eta + \xi)}{ \left( 1+( \hat{i}_1 + 
 \eta +\xi)^2 \right)^2 },
\end{split}
\end{equation}
where we have also used
in the last inequality the fact that
$$
\overline{u}_1^2 (i) + \overline{u}_2^2 (i) \le 
\frac{4+9 \pi^2}{4 \pi^2}  \quad \text{ for any } \, i \in h\Z^2,
$$
which follows from the upper bounds in \eqref{eq:exarctan:OVERLINEU1}
and \eqref{eq:exarctan:OVERLINEU2}.

By using that $\xi, \eta \in (0,h)$ and recalling \eqref{eq:ex con arctan:hsmallness} and \eqref{EQ:LB SET I_1HAT: EXARCTAN} we have that
$$
1 \le - (\hat{i}_1 +  \eta +\xi ) \le 4 ,
$$
and hence \eqref{EQ:LB EXARCTAN STEP FORSE} gives
$$
\left| \partial_{x_1} \overline{\vartheta} ( \hat{i}_1 +  \eta,0) \right| \ge 
\frac{4 \pi }{ 289 (4 + 9 \pi^2) } \, h ,
$$
from which, by recalling \eqref{EQ:LB EXARCTAN STEP 2}, we find
$$
\left| {\mathcal{D}}_1^+\vartheta^+_{( \hat{i}_1 ,0) } \right| \ge \frac{4 \pi }{ 289 (4 + 9 \pi^2) } \, h.
$$
The last inequality and \eqref{EQ:LB EXARCTAN STEP 1} clearly give \eqref{eq:EXARCTAN:LOWERBOUNDTHM} with $c_0= 4 \left[ \frac{4 \pi }{ 289 (4 + 9 \pi^2) } \right]^2$.

On the other hand, in light of \eqref{EQ:EXARCTAN: KAPPA 2} and \eqref{EQ:EXARCTAN:KAPPA 3,4,5,6,7}, there exists a universal positive
constant $c_1$ (independent of $h$ and $i$) such that
\begin{equation}\label{eq:EXARCTAN:UPPERBOUNDTHM} 
C:= 4 \left( \kappa_2^++ 2 e^{2\pi}\kappa_3^++ 2 e^{2\pi}\kappa_4^++2 \kappa_5^+ + \kappa_6^+ + \kappa_7^+ \right)
\le c_1 \, h .
\end{equation}
By putting together \eqref{FORM},
\eqref{eq:EXARCTAN:LOWERBOUNDTHM},
and \eqref{eq:EXARCTAN:UPPERBOUNDTHM}
we thus find
$$
c_0 \, h^2 \le \sum_{{1\le j\le 2}\atop{i\in h\Z^2}}
(\rho_i^+)^2 \;\Big(
|{\mathcal{D}}_j^+\vartheta^+_i|^2+|{\mathcal{D}}_j^-\vartheta^+_i|^2
\Big)\le C h \le c_1 \, h^2 ,
$$
where $c_0$ and $c_1$ are two positive
universal constants (independent of $h$).
Thus,
left-hand side and right-hand side of \eqref{FORM} are both of the order of $h^2$. In this sense, \eqref{FORM} is optimal.

\smallskip 

We conclude this example by proving \eqref{eq:exarctan:FINALEALLAFINE}.
%
%
To this aim, notice that, since \eqref{eq:ex con arctan:hsmallness} is in force, for any $c\in \R$ the level curve
$$
L_c:= \left\lbrace x_2 + \frac{h}{\pi} \, e^{-x_2^2} \arctan(x_1)=c , \quad{\mbox{for}}\quad (x_1,x_2) \in \R^2 \right\rbrace
$$
passes through $(0,c)$ and is contained in $ \R  \times \left( c - \frac{h}{2}, c + \frac{h}{2} \right)  $; see Figure~\ref{FIG1}
for a sketch of these level curves.

\begin{figure}[h!]
\includegraphics[scale=0.8]{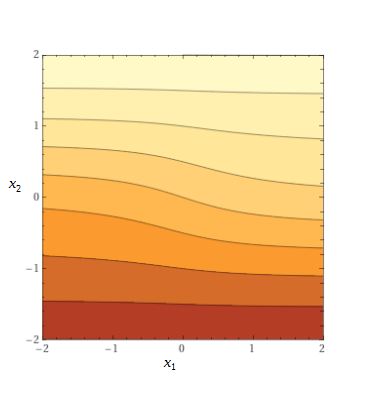}
\caption{Level curves~$L_c$ for $h=1$.}\label{FIG1}
\end{figure}

Since the ``height'' (in the $x_2$-direction) of $L_c$ is less
than $h$ and $L_c$ is decreasing in
the\footnote{By the implicit function theorem,
$L_c$ is (locally) the graph of a function $g(x_1)$
of the $x_1$ variable, which is decreasing. Indeed,
by setting $\overline{u}(x):= x_2 +
 \frac{h}{\pi} \, e^{-x_2^2} \arctan(x_1)$, we have
$$
g'= - \frac{ \partial_{x_1} \overline{u} }{ \partial_{x_2} \overline{u} } <0 ,
$$
being $\partial_{x_1} \overline{u} = 
\frac{h}{\pi} \, \frac{e^{-x_2^2}}{ 1 + x_1^2 } >0 $
by \eqref{eq:ex con arctan:hsmallness}, and
$\partial_{x_2} \overline{u} = 1 - \frac{h}{\pi} 2 x_2^2
 \, e^{-x_2^2} \arctan(x_1) > \frac{1}{2}>0 $
by~\eqref{eq:ex con arctan:hsmallness},
\eqref{eq:exarctan:disuguaglianzaper t e^(-t^2)},
and \eqref{eq:exarctan:boundsuarctan}.}
$x_1$-direction, we have that
for any $c \in \R$, $L_c$ intersects (at most) only one horizontal
line of the family $x_2= h z$, $z \in \Z$,
and this intersection is given by a single point.
From this, we deduce that the function $u : h \Z^2 \to \R$ is injective. 
Thus, by denoting with $\mathrm{Im}(u)$ the image of $u$, there exists $u^{-1}: \mathrm{Im}(u) \to h \Z^2$ such that 
\begin{equation}\label{eq:leftinverse}
u^{-1}(u_i)=i , \quad \text{for all } \, i \in h \Z^2.
\end{equation}

Now we define $\hat{f} : \R \to \R$ as follows
$$
\hat{f}(x)=
\begin{cases}
\tilde{f} \circ u^{-1}(x) , & \quad \text{ for } \, x \in \mathrm{Im}(u),
\\
0 , & \quad \text{ for } \, x \in \R \setminus \mathrm{Im}(u) .
\end{cases}
$$
For all $i \in h \Z^2$ we have
$$
\tilde{f}(i)= \tilde{f} \circ u^{-1} (u_i) = \hat{f} (u_i) ,
$$
and hence \eqref{eq:exarctan:FINALEALLAFINE} holds true. By \eqref{eq:ex arctan:equazione}, $u$ satisfies
\eqref{DGEQ} with
\begin{equation*}
f(i,u_i):=\tilde{f}(i)= \hat{f}(u_i) .
\end{equation*}
In particular, $u$ is solution of
$$
{\mathcal{L}} u_i = \hat{f}(u_i) ,
$$
where the source term $\hat{f}$ only depends on $u$.
}
\end{example}

\begin{example}\label{example EXPONENTIAL}
{\rm
For any
\begin{equation}\label{EQ:EXEXPONENTIAL H SMALLNESS}
0 < h \le 1 .
\end{equation}
we consider
$$
u_i: = i_2 + \frac{h}{2} \, e^{-|i|^2} 
$$
and we define
$$
\tilde{f}(i) := \mathcal{L}u_i =\frac{e^{-|i|^2}}h\,\sum_{j=1}^2\big(
e^{-2hi_j-h^2}+e^{2hi_j-h^2}-2
\big)
$$
and
\begin{equation}\label{eq:exexponential EQUATION TILDE}
f(i, u_i) := \tilde{f}(i) .
\end{equation}
In this way, 
we have that~\eqref{DGEQ} holds true. Also, $f$
satisfies~\eqref{hp:semilinearitaconspazio}. Indeed,
given~$i\in h\Z^2$,
we have that the map~$\R\ni r\mapsto f(i,r)$ is constant,
thus~$f'(i,\cdot)=0$. Additionally,
\begin{eqnarray*}
\big|e^{-2hi_j}+e^{2hi_j}-2\big|&=&
\left| \sum_{k=0}^{+\infty} \frac{(-2hi_j)^k}{k!}
+\sum_{k=0}^{+\infty} \frac{(2hi_j)^k}{k!}
-2\right|\\&=&
\left| \sum_{k=2}^{+\infty} \frac{(-2hi_j)^k}{k!}
+\sum_{k=2}^{+\infty} \frac{(2hi_j)^k}{k!}\right|\\&\le&
2\sum_{k=2}^{+\infty} \frac{(2h|i|)^k}{k!}\\&=&
8  h^2|i|^2\sum_{j=0}^{+\infty} \frac{(2h|i|)^j}{(j+2)!}\\&\le&
8  h^2|i|^2\sum_{j=0}^{+\infty} \frac{(2h|i|)^j}{j!}\\&=&
8  h^2|i|^2e^{2h|i|}\\&\le&8  h^2|i|^2e^{2|i|}
\end{eqnarray*}
and, as a result,
\begin{eqnarray*} &&\big|
e^{-2hi_j-h^2}+e^{2hi_j-h^2}-2
\big|\le
\big|
e^{-2hi_j-h^2}+e^{2hi_j-h^2}-2e^{-h^2}
\big|+2(1-e^{-h^2})\\
&&\qquad\le
\big|
e^{-2hi_j}+e^{2hi_j}-2
\big|+2\left(1-\sum_{k=0}^{+\infty}\frac{(-h^2)^k}{k!}\right)\\&&\qquad\le
8  h^2|i|^2e^{2h|i|}
-2\sum_{k=1}^{+\infty}\frac{(-h^2)^k}{k!}
\\&&\qquad=
8  h^2|i|^2e^{2h|i|}
+2h^2\sum_{j=0}^{+\infty}\frac{(-h^2)^j}{(j+1)!}\\&&\qquad\le
8  h^2|i|^2e^{2h|i|}
+2h^2\sum_{j=0}^{+\infty}\frac{h^{2j}}{j!}\\&&\qquad\le
8  h^2|i|^2e^{2h|i|}
+2e^{h^2}h^2\\&&\qquad\le
8  h^2\big(|i|^2e^{2h|i|}
+1\big).
\end{eqnarray*}
Consequently,
\begin{eqnarray*}
|\nabla\tilde f(i)|&\le&
\frac{2|i|\,e^{-|i|^2}}h\,\sum_{j=1}^2\big|
e^{-2hi_j-h^2}+e^{2hi_j-h^2}-2
\big|+
2e^{-|i|^2}\,\sum_{j=1}^2\big|
e^{-2hi_j-h^2}-e^{2hi_j-h^2}
\big|\\&\le&
32\,h\, |i|\,e^{-|i|^2}\big(|i|^2e^{2h|i|}
+1\big)
+
2e^{-|i|^2}\,\sum_{j=1}^2\left|\int_{-2hi_j}^{2hi_j}e^t\,dt
\right|\\&\le&
32\,h\, |i|\,e^{-|i|^2+2h|i|}\big(|i|^2
+1\big)
+
16\,h\,|i|\,e^{-|i|^2+2h|i|}
\\&\le&
32\,h\, |i|\,e^{-|i|^2+2|i|}\big(|i|^2
+2\big)\\&\le&
32 \,S \,h,
\end{eqnarray*}
where
$$ S:=\sup_{t\in\R}\Big(
t\,e^{-t^2+2t}\big(t^2
+2\big)\Big).$$
Therefore, for all~$ i \in h\Z^2 $,
\begin{eqnarray*}
&& \sum_{j=1}^{2} \frac{ \big| f(i+he_j, u_{i+he_j})-f(i, u_i)-
 f' (i, u_i)(u_{i+he_j}-u_i)\big| }{h}
=
\sum_{j=1}^{2} \frac{ \big| \tilde f(i+he_j)-\tilde f(i)\big| }{h}
\le \kappa_0^+
 \,  h,
\end{eqnarray*}
with~$\kappa_0^+:=64 \,S $, which yields~\eqref{hp:semilinearitaconspazio}.

In addition,
for any $h>0$, $u$ is not one-dimensional,  
\eqref{eq:esempi div da zero} holds true, and, if $h$ is small enough, then~\eqref{hp:regularitysemilinearity}, \eqref{hp:semilinearitaconspazio}, \eqref{BOULI}, \eqref{K2},
\eqref{K33}, 
and~\eqref{MONO} (and hence \eqref{hp:graddiversodazero}), 
are all satisfied. Thus, Theorem \ref{DGDG} applies, but~$u$ is not one-dimensional.
The computations to verify all the assumptions are similar to those of Example \ref{example ARCTAN}.

In the formal limit as $h \searrow 0$, an asymptotic analysis similar to that of Example \ref{example ARCTAN} can be performed. As in Example \ref{example ARCTAN}, left-hand side and right-hand side of \eqref{FORM} are both of the order of $h^2$. Thus, also this example confirms the optimality of \eqref{FORM}.

We conclude by showing that, also the function presented in this example can be seen as solution of an equation of the type
$$
{\mathcal{L}} u_i = \hat{f}(u_i) ,
$$
where the source term $\hat{f}$ only depends on $u$.

For any $c\in \R$, the ``height'' (in the $x_2$-direction) of the level curve
$$
L_c:= \left\lbrace x_2 + \frac{h}{2} \, e^{-(x_1^2+ x_2^2)}=c , \quad{\mbox{for}}\quad (x_1,x_2) \in \R^2 \right\rbrace
$$
is less than $h/2$, whenever \eqref{EQ:EXEXPONENTIAL H SMALLNESS} is in force. 
Also,
$L_c$ is symmetric with respect to the $x_2$-axis --- i.e., $(x_1,x_2) \in L_c$ if and only if $(-x_1,x_2) \in L_c $ ---, and it is increasing (resp. decreasing) in the\footnote{By the
implicit function theorem, $L_c$ is (locally) the graph
of a function $g(x_1)$ of the $x_1$ variable,
which is increasing (resp. decreasing) in the $x_1$-direction for
$x_1>0$ (resp. $x_1 <0$). Indeed, by setting $\overline{u}(x):= x_2 + \frac{h}{2} \, e^{-x_1^2 - x_2^2} $, we have
$$
g'= - \frac{ \partial_{x_1} \overline{u} }{ \partial_{x_2} \overline{u} }
\, \, 
\begin{cases} 
> 0 \quad \text{ if } \, x_1>0
\\
< 0 \quad \text{ if } \, x_1<0
,
\end{cases}
$$
being 
$$
\partial_{x_1} \overline{u} = - h \, x_1 \, e^{-x_1^2 -x_2^2} \, \,
\begin{cases} 
< 0 \quad \text{ if } \, x_1>0
\\
> 0 \quad \text{ if } \, x_1<0 
\end{cases}
$$ 
by \eqref{eq:ex con arctan:hsmallness}, and $\partial_{x_2}
 \overline{u} = 1 - h ( x_2 \, e^{-x_2^2} ) e^{-x_1^2} > 
\frac{1}{2}>0 $ by  \eqref{eq:ex con arctan:hsmallness} 
and \eqref{eq:exarctan:disuguaglianzaper t e^(-t^2)}. }
$x_1$-direction for $x_1>0$
(resp. $x_1 <0$). Thus, we have that for any $c \in \R$, $L_c$ intersects (at most) only one horizontal line of the family $x_2= h z$, $z \in \Z$, and this intersection is given by a single point 
$$
(\overline{x}_1, \overline{x}_2) \in \R^+ \times \R:= \left\lbrace (x_1,x_2) \in \R^2 \, : \, x_1 \ge 0 \right\rbrace 
$$
in the right half-space, and its symmetric 
$$
(-\overline{x}_1, \overline{x}_2) \in \R^- \times \R:= \left\lbrace (x_1,x_2) \in \R^2 \, : \, x_1 \le 0 \right\rbrace 
$$
in the left half-space. 
See Figure~\ref{FIG2}
for a sketch of the level curves $L_c$.

\begin{figure}[h!]
\includegraphics[scale=0.8]{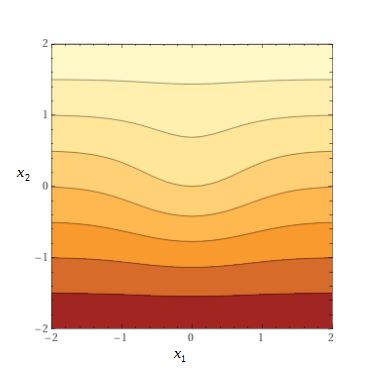}
\caption{Level curves~$L_c$ for $h=1$.}\label{FIG2}
\end{figure}

Hence, the function $u : h \Z^2 \to \R$ is injective on $h\Z^+ \times h\Z$ (or $h\Z^- \times h\Z$), where $h\Z^+ \times h\Z$ (resp. $h\Z^- \times h\Z$) is the set of points~$i=(i_1,i_2) \in h\Z^2$ such that $i_1 \ge 0$ (resp. $i_1 \le 0$).

Thus, by denoting with $\mathrm{Im}(u)$ the image of $u$,
%
%
there exists $u^{-1}: \mathrm{Im}(u) \to h \Z^+ \times h \Z$ such that 
\begin{equation*}
u^{-1}(u_i)=i , \quad \text{ for all }  i \in h \Z^+ \times h\Z.
\end{equation*}

Now we define $\hat{f} : \R \to \R$ as follows
$$
\hat{f}(x)=
\begin{cases}
\tilde{f} \circ u^{-1}(x)  & \quad \text{ for } \, x \in \mathrm{Im}(u),
\\
0  & \quad \text{ for } \, x \in \R \setminus \mathrm{Im}(u) ,
\end{cases}
$$
and we notice that
$$
\tilde{f}(i)= \tilde{f} \circ u^{-1} (u_i) = \hat{f} (u_i) , \quad \text{ for all } \, i \in h \Z^2 .
$$
The previous identity clearly holds true, by definition of $\hat{f}$, whenever $i \in h\Z^+ \times h\Z$. However, it remains true even if $i \in h\Z^- \times h\Z $. Indeed, thanks to the symmetry properties
$$
u_{(i_1,i_2)} = u_{(-i_1,i_2)} \quad \text{ for any } i=(i_1,i_2) \in h\Z^2 ,
$$
$$\tilde{f}(i_1, i_2) = \tilde{f}(-i_1,i_2) \quad \text{ for any } i=(i_1,i_2) \in h\Z^2 ,
$$
if $i=(i_1,i_2) \in h\Z^- \times h\Z$ we can still write
$$
\tilde{f}(i)= \tilde{f}(i_1, i_2) = \tilde{f}(-i_1,i_2) = \tilde{f} \circ u^{-1} (u_{(-i_1,i_2)}) = \hat{f} (u_{( - i_1,i_2)})  = \hat{f} (u_{(i_1,i_2)}) = \hat{f} (u_i).
$$

Thus, for any $i \in h\Z^2$ \eqref{DGEQ} holds true with
\begin{equation*}
f(i,u_i):=\tilde{f}(i)= \hat{f}(u_i) .
\end{equation*}
In particular, by \eqref{eq:exexponential EQUATION TILDE}, $u$ is solution of
$$
{\mathcal{L}} u_i = \hat{f}(u_i) ,
$$
where the source term $\hat{f}$ only depends on $u$.
}
\end{example}

The previous three examples have been obtained by perturbing the one-dimensional linear function $i_2$. We stress that more general examples could be obtained by perturbing different one-dimensional functions.
For instance, the following example is obtained by perturbing a (strictly monotone) one-dimensional solution $\tilde{v}: \R \to \R$ of a general semilinear autonomous equation $\tilde{v}'' = g( \tilde{v})$, with the perturbation already used in Example \ref{example 1}.  
As a concrete reference case, one may think to, e.g., $\tilde{v}(t) := 4 \arctan(e^{t})$ which satisfies the stationary sine-Gordon equation
\begin{equation}\label{eq:ex4:sineGordoneq}
\tilde{v}'' = \sin \left( \tilde{v} \right) .
\end{equation}

Of course, Example \ref{example 1} is a particular case of Example \ref{example 4: general semilinear} (in which $\tilde{v}(t):=t$ and $g\equiv 0$).

\begin{example}\label{example 4: general semilinear}
{\rm
Consider $\tilde{v}: \R \to \R$ satisfying the semilinear equation
\begin{equation}\label{eq:ex4:g semilinear}
\tilde{v}'' = g( \tilde{v} ) ,
\end{equation}
where $g: \R \to \R$.
For $x=(x_1, x_2) \in \R^2$ we set
\begin{equation}\label{eq:ex4:dev ol v con v tilde}
\overline{v}(x) := \tilde{v} (x_2)  ,
\end{equation}
and for $i=(i_1 , i_2)$ we define
\begin{equation}\label{AAASSSS}
v_i := \overline{v}(i),\end{equation}
which is the restriction of $\overline{v}$ to $h\Z^2$.
With these definitions we clearly have
\begin{eqnarray*}
&&\Delta \overline{v} (x)= \tilde{v}''(x_2) , \quad{\mbox{ for all }} x \in \R^2,
\\
{\mbox{and }}&&
{\mathcal{L}} v_i = {\mathcal{L}}_2 v_i , \quad{\mbox{ for all }}
i \in \R^2,
\end{eqnarray*}
where we have used the notation in~\eqref{0ok8uh7gfr8} for~${\mathcal{L}}_2$.

We now show that, if $\Vert \tilde{v}' \Vert_{L^\infty (\R)} 
<\infty $, $\Vert \tilde{v}^{( iv)} 
\Vert_{L^\infty( \R)} < \infty$  and
$\Vert g'' \Vert_{L^\infty ( \R )}<\infty$,
then $v$, as defined in~\eqref{AAASSSS},
satisfies assumption \eqref{hp:GENERALsemilinearitaconspazio}
with $f(i,u_i)$ and $L_f^+ (i, u_i)$ replaced by ${\mathcal{L}} v_i$
and $g'(v_i)$. Here, $\tilde{v}^{( iv)} $ denotes the fourth derivative
of~$\tilde{v}$.

Indeed, a fourth order Taylor expansion with Lagrange
remainder terms shows that
%
%
\begin{equation}\label{eq:ex4:step1}
\begin{split}&
\frac{\left({\mathcal{L}} v\right)_{i+h e_2} + \left({\mathcal{L}} v\right)_{i} - g'(v_i)(v_{i+h e_2} - v_i)}{h}
\\&\qquad=\, \frac{h}{4!} \left[ \sum_{k=1}^4 \tilde{v}^{( iv)}(i+\xi_k e_2)   \right] 
+ \frac{ \tilde{v}''(i+h e_2) - \tilde{v}''(i) - g'(v_i)(v_{i+h e_2} - v_i) }{h}
\end{split}
\end{equation}
for some $\xi_1$, $ \xi_4 \in (0,h)$, $\xi_2 \in ( 0 , -h )$, $\xi_3 \in ( h, 2h)$.
By using \eqref{eq:ex4:g semilinear} we compute
\begin{equation}\label{eq:ex4:step2}
\begin{split}
\frac{ \tilde{v}''(i+h e_2) - \tilde{v}''(i) - g'(v_i)(v_{i+h e_2} - v_i) }{h}
= & \,  \frac{ g(i+h e_2) - g(i) - g'(v_i)(v_{i+h e_2} - v_i) }{h}
\\
= & \, \frac{ g''( v_i + \eta ) }{ 2 } \, \left[ \frac{v_{ i + h e_2} - v_i }{h} \right]^2 \, h
\\
= & \, \frac{ g'' ( v_i + \eta ) }{ 2 } \, \left[ v'( i + \xi_5 ) \right]^2 \, h ,
\end{split}
\end{equation}
for some $\eta \in (0, v_{i+h e_2} - v_i)$, $\xi_5 \in (0,h)$.
By using that $v$ only depends on $i_2$ and putting together \eqref{eq:ex4:step1}
and~\eqref{eq:ex4:step2} we get that
\begin{equation}\label{eq:ex4:condizione k0 su v}
\begin{split}
\sum_{j=1}^2 \frac{ \left( {\mathcal{L}} v \right)_{i+h e_j} + \left( {\mathcal{L}} v \right)_{i} - g'(v_i)( v_{i+h e_j} - v_i )}{h}
= & \,  \frac{ \left( {\mathcal{L}} v \right)_{i+h e_2} + \left( {\mathcal{L}} v \right)_{i} - g'(v_i)( v_{i+h e_2} - v_i )}{h} 
\\
\le & \, 
h \left\lbrace \frac{ \Vert \tilde{v}^{( iv)} \Vert_{L^\infty( \R)} }{6} + \frac{ \Vert g'' \Vert_{L^\infty ( 
\R )}}{2} \Vert \tilde{v}' \Vert_{L^\infty ( \R )}  \right\rbrace ,
\end{split}
\end{equation}
that is, $v$ satisfies \eqref{hp:GENERALsemilinearitaconspazio} --- where $f(i,u_i)$ and $L_f^+ (i, u_i)$ are replaced by ${\mathcal{L}} v_i$ and $g'(v_i)$ --- with 
$$
k_0^+ = \frac{ \Vert \tilde{v}^{( iv)} \Vert_{L^\infty( \R)} }{6} + \frac{ \Vert g'' \Vert_{L^\infty (
\R )}}{2} \Vert \tilde{v}' \Vert_{L^\infty ( \R )}.
$$

Notice that in concrete cases in which $\tilde{v}$ and $g$ are explicitly given\footnote{As a concrete example, one may consider the function
$\tilde{v}(t) := 4 \arctan(e^{t})$ which satisfies the stationary sine-Gordon equation \eqref{eq:ex4:sineGordoneq}.
In this case, $g(t) := \sin \left( t \right)$, $t \in \R$,
$\overline{v}(x) := 4 \arctan(e^{x_2}) $, $x=(x_1, x_2) \in \R^2$,
and $v_i := 4 \arctan(e^{i_2})$ is the restriction of $\overline{v}$ to $h\Z^2$.
}, one could explicitly compute $\Vert \tilde{v}' \Vert_{L^\infty ( \R )}$, $\Vert \tilde{v}^{(iv)} \Vert_{L^\infty( \R)}$ and $\Vert g'' \Vert_{L^\infty ( 
\R )}$.

We now denote by $w$ the perturbation already used in Example \ref{example 1}, that is,
\begin{equation}\label{ex4:eq:w def}
w_i:=
\begin{cases}
h^4 \quad & \text{for } \, i = (i_1,0) , \, i_1 > 0,
\\
0 \quad \, & \text{otherwise in }  h\Z^2 .
\end{cases}
\end{equation}
For any 
\begin{equation}\label{eq:ex4:h tra 0 e 1}
0<h<1
\end{equation}
and $i=(i_1,i_2) \in h\Z^2$, we then consider
\begin{equation}\label{eq:ex4:def u}
u_i:= v_i + w_i
\end{equation}
and we set 
\begin{equation}\label{eq:ex4:def f per u}
f(i, u_i):= \tilde{f}(i):=  {\mathcal{L}} v_i + {\mathcal{L}}w_i ,
\end{equation}
so that~${\mathcal{L}}u_i=\tilde f(i)$,
and therefore~\eqref{DGEQ} is satisfied.

We now show that, if $\Vert \tilde{v}' \Vert_{L^\infty (\R)} 
<\infty $, $\Vert \tilde{v}^{( iv)} 
\Vert_{L^\infty( \R)} < \infty$, $\Vert g' \Vert_{L^\infty ( \R )}<\infty$  and
$\Vert g'' \Vert_{L^\infty ( \R )}<\infty$,
then $f$ satisfies \eqref{hp:GENERALsemilinearitaconspazio} with $L_f^+(i, u_i) := g'(v_i) $.

To check this, we start by computing 
\begin{equation}\label{eq:ex4:step3}
\begin{split}
 & \sum_{j=1}^2 \frac{ \left| \tilde{f}(i+h e_j)- \tilde{f}(i) - g'(v_i)(u_{i+ h e_j} - u_i) \right| }{h} 
\\ 
& =  \, 
\sum_{j=1}^2 \frac{ \left| \left( {\mathcal{L}} v\right)_{i+h e_j} + \left( {\mathcal{L}} w\right)_{i+h e_j} - \left( {\mathcal{L}} v\right)_{i} - \left( {\mathcal{L}} w\right)_{i}  - g'(v_i)(v_{i+ h e_j} + w_{i+ h e_j} - v_i - w_i )  \right| }{h}
\\
& \le  \, 
\left\lbrace \sum_{j=1}^2  \frac{ \left| \left( {\mathcal{L}} v\right)_{i+h e_j}  - \left( {\mathcal{L}} v\right)_{i}  - g'(v_i)(v_{i+ h e_j} - v_i )  \right| }{h} \right\rbrace
 +
\left\lbrace  \sum_{j=1}^2  \frac{ \left| \left( {\mathcal{L}} w\right)_{i+h e_j} -  \left( {\mathcal{L}} w\right)_{i}  - g'(v_i)(w_{i+ h e_j} - w_i )  \right| }{h} \right\rbrace
.
\end{split}
\end{equation}
Here, in the last inequality we used the triangle inequality.

The term in the first braces in \eqref{eq:ex4:step3} can be estimated by means of \eqref{eq:ex4:condizione k0 su v}.
We now estimate the term in the second braces as follows:
\begin{equation}\label{eq:ex4:step4}
\begin{split}
\sum_{j=1}^2 & \frac{ \left| \left( {\mathcal{L}} w \right)_{i+h e_j} -  \left( {\mathcal{L}} w\right)_{i}  - g'(v_i)( w_{i+ h e_j} - w_i )  \right| }{h} 
\\
\le &  
\sum_{j=1}^2 \frac{ \left| \left( {\mathcal{L}} w \right)_{i+h e_j} -  \left( {\mathcal{L}} w\right)_{i}  \right| }{h} + \Vert g' \Vert_{L^\infty ( 
\R )}  \frac{ \left| w_{i+ h e_j} - w_i  \right| }{h}
\\
\le & \,
\sum_{j=1}^2 \frac{ \left| \left( {\mathcal{L}} w\right)_{i+h e_j} -  \left( {\mathcal{L}} w\right)_{i}  \right| }{h} + \Vert g' \Vert_{L^\infty ( 
\R )} \,  h^3 
\\
\le & \,
\sum_{j=1}^2 \frac{ \left| \left( {\mathcal{L}} w\right)_{i+h e_j} -  \left( {\mathcal{L}} w\right)_{i}  \right| }{h} + \Vert g' \Vert_{L^\infty (
\R )}  \, h
\end{split}
.
\end{equation}
Here, the first inequality follows by the triangle inequality,
the second inequality can be
deduced by \eqref{ex4:eq:w def}, and the third inequality follows by \eqref{eq:ex4:h tra 0 e 1}.

By noting that ${\mathcal{L}}w_i$ coincides with the function $\tilde{f}(i)$ defined in \eqref{eq:ex1:f def serve per ex4} (in Example \ref{example 1}), the same computations that gave \eqref{eq:ex1:kappa0} now inform us that
\begin{equation}\label{eq:ex4:step5}
\sum_{j=1}^2 \frac{ \left| \left( {\mathcal{L}} w\right)_{i+h e_j} -  \left( {\mathcal{L}} w\right)_{i}  \right| }{h} \le 5 \, h .
\end{equation}

By putting together \eqref{eq:ex4:step3},
\eqref{eq:ex4:condizione k0 su v}, \eqref{eq:ex4:step4}
and \eqref{eq:ex4:step5}, we thus obtain that $f$ (as defined in \eqref{eq:ex4:def f per u}) satisfies \eqref{hp:semilinearitaconspazio} with $L_f^+ (i, u_i) = g'(v_i)$ and 
\begin{equation}\label{eq:ex4:k0+ definition}
\kappa_0^+= \left\lbrace \frac{ \Vert \tilde{v}^{( iv)} \Vert_{L^\infty( \R)} }{6} + \frac{ \Vert g'' \Vert_{L^\infty (
\R )}}{2} \Vert \tilde{v}' \Vert_{L^\infty ( \R )}  \right\rbrace + \left\lbrace   5 + \Vert g' \Vert_{L^\infty ( 
\R )} \right\rbrace .
\end{equation}

From now on we assume that $\tilde{v}$ is strictly increasing (this
is indeed the case of the sine-Gordon equation).
Under this assumption it is clear that, if $h$ is small enough, then $u$ satisfies~\eqref{MONO}, and hence \eqref{hp:graddiversodazero} holds true.
In fact, by recalling the definition of $u$ in \eqref{eq:ex4:def u},
we have that $u$ satisfies~\eqref{MONO} if and only if
\begin{equation*}
v_{(h,h)} - v_{(h,0)} > h^4
\end{equation*}
and hence --- by recalling that $v_{(h,h)}= v_{(0,h)}$ and $v_{(h,0)} = v_{(0,0)}$ (since $v$ depends on $i_2$ only) --- if and only if
\begin{equation*}
\mathcal{D}_2^+ v_{(0,0)} > h^3.
\end{equation*}
For this reason and recalling \eqref{eq:ex4:h tra 0 e 1}, from now on we assume
\begin{equation}\label{eq:ex4: h tra 0 e min tra 1 e D2}
0<h< \min \left\lbrace 1 , \left( \mathcal{D}_2^+ v_{(0,0)} \right)^{1/3} \right\rbrace
\end{equation}
We stress that $\mathcal{D}_2^+ v_{(0,0)}$ is always a number strictly greater than $0$ in light of the assumption that $\tilde{v}$ is strictly increasing.
Moreover, ${\mathcal{D}}_2^+ v_{(0,0)}$ could be explicitly computed in concrete examples in which $\tilde{v}$ is explicitly
given\footnote{
For instance, in the concrete case of the stationary sine Gordon equation \eqref{eq:ex4:sineGordoneq}, 
by recalling \eqref{eq:TAYLORBOUNDSUP FIRSTDERIVATIVE} (with $\overline{v}(x) := 4 \arctan(e^{x_2}) $ and $v_i := 4 \arctan(e^{i_2})$), we easily find that
\begin{equation*}
{\mathcal{D}}_2^+ v_{(0,0)}=  4 \frac{e^\xi}{1+e^{2 \xi}} , 
\end{equation*}
for some $\xi \in ( 0,h )$. Thus, since
$
{\mathcal{D}}_2^+ v_{(0,0)} \ge 4 \frac{e^h}{1+ e^{2 h}  } \ge \frac{2}{e^h} ,
$
by using \eqref{eq:ex4:h tra 0 e 1} we easily obtain that
\begin{equation*}
{\mathcal{D}}_2^+ v_{(0,0)} \ge 2/e .
\end{equation*}
Hence, 
in this case \eqref{eq:ex4: h tra 0 e min tra 1 e D2} would become simply
$
0<h< \left( 2/e \right)^{1/3} .
$
%
%
}.

We now show that $u$ also
satisfies~\eqref{BOULI}, \eqref{K2}
and
\eqref{K33} with $\vartheta^+_\infty := \pi / 2$.
To this aim, we directly compute
$$
{\mathcal{D}}_1^+ u_i =
\begin{cases}
h^3 \quad \, & \text{for } \, i = ( 0 , 0 ) , 
\\
0 \quad \, & \text{otherwise in } \, h\Z^2,
\end{cases}
\qquad
\text{ and }
\qquad
{\mathcal{D}}_2^+ u_i =
\begin{cases}
{\mathcal{D}}_2^+ v_i - h^3 \quad \, & \text{for } \, i = (i_1,0) \text{ with } i_1>0 ,
\\
{\mathcal{D}}_2^+ v_i + h^3  \quad \, & \text{for } \, i = (i_1,-h) \text{ with } i_1>0,
\\
1 \quad \, & \text{otherwise in } \, h\Z^2 ,
\end{cases}
$$
and hence
\begin{eqnarray*}
\vartheta^+_i&=& 
\begin{cases}
\arctan\left( \frac{ {\mathcal{D}}_2^+ v_{i} }{h^3} \right) \quad \, & \text{for } \, i = (0,0),
\\
\frac{ \pi}{2} \quad & \text{otherwise in } \, h\Z^2 ,
\end{cases}\\
\text{and }\qquad
(\rho_i^+)^2 &=& 
\begin{cases}
\left( {\mathcal{D}}_2^+ v_i \right)^2 + h^6 \quad \, & \text{for } \, i = (0,0),
\\
( {\mathcal{D}}_2^+ v_i - h^3 )^2 \quad \, & \text{for } \, i = (i_1,0) \, \text{ with } \, i_1>0 ,
\\
( {\mathcal{D}}_2^+ v_i + h^3 )^2 \quad \, & \text{for } \, i = (i_1,-h) \, \text{ with } \, i_1>0 ,
\\
\left( {\mathcal{D}}_2^+ v_i \right)^2 \quad & \text{otherwise in } \, h\Z^2 .
\end{cases}
\end{eqnarray*}
By recalling \eqref{eq:TAYLORBOUNDSUP FIRSTDERIVATIVE}
and \eqref{eq:ex4:dev ol v con v tilde} we get that
\begin{equation*}
|{\mathcal{D}}_2^+ v_i| \le \Vert \tilde{v}' \Vert_{L^\infty (\R)} ,
\end{equation*}
and hence, by recalling that $0<h<1$, we find that
\begin{equation}\label{eq:ex4:bound per rho+}
\rho_i^+ \le \Vert \tilde{v}' \Vert_{L^\infty (\R)} + h^3 < \Vert \tilde{v}' \Vert_{L^\infty (\R)} + 1 =:S .
\end{equation} 
We also notice that
\begin{eqnarray*}
\kappa_1^+ :=  \sup_{{1\le j\le 2}\atop{i\in h\Z^2}} |
 {\mathcal{D}}_j^+ u_i | \le \Vert \tilde{v}' \Vert_{
L^\infty (\R)} + h^3 < \Vert \tilde{v}' \Vert_{L^\infty (\R)} + 1 =S ,
\end{eqnarray*}
being~$0< h < 1$, and this establishes~\eqref{BOULI}.

We then compute
$$
{\mathcal{D}}_1^+ \vartheta^+_i =
\begin{cases}
\frac{1}{h} \left( \frac{ \pi}{2} - \arctan\left( \frac{ {\mathcal{D}}_2^+ v_{(0,0)} }{h^3} \right) \right) \quad \, & \text{for } \, i = ( 0 , 0 ) , 
\\
\frac{1}{h} \left( \arctan\left( \frac{ {\mathcal{D}}_2^+ v_{(0,0)} }{h^3} \right) -\frac{\pi}{2} 
\right) \quad \, & \text{for } \, i = ( -h , 0),
\\
0 \quad \, & \text{otherwise in } \, h\Z^2,
\end{cases}
$$
$$
{\mathcal{D}}_2^+ \vartheta^+_i =
\begin{cases}
\frac{1}{h} \left( \frac{ \pi}{2} - \arctan\left( \frac{ {\mathcal{D}}_2^+ v_{(0,0)} }{h^3} \right) \right) \quad \, & \text{for } \, i = ( 0 , 0 ) , 
\\
\frac{1}{h} \left( \arctan\left( \frac{ {\mathcal{D}}_2^+ v_{(0,0)}  }{h^3} \right) -\frac{\pi}{2} \right) \quad \, & \text{for } \, i = ( 0, -h)
,\\
0 \quad \, & \text{otherwise in } \, h\Z^2 ,
\end{cases}
$$
$$
{\mathcal{D}}_1^+ \left( {\mathcal{D}}_1^+ \vartheta^+ \right)_{i-h e_j} =
\begin{cases}
\frac{2}{h^2} \left( \frac{ \pi}{2} - \arctan\left( \frac{ {\mathcal{D}}_2^+ v_{(0,0)} }{h^3} \right) \right) \quad \, & \text{for } \, i = ( 0 , 0 ) , 
\\
\frac{1}{h^2} \left( \arctan\left( \frac{ {\mathcal{D}}_2^+ v_{(0,0)} }{h^3} \right) -\frac{\pi}{2} \right) \quad \, & \text{for } \, i = ( \pm h , 0)
,\\
0 \quad \, & \text{otherwise in } \, h\Z^2,
\end{cases}
$$
and
$$
{\mathcal{D}}_2^+ \left( {\mathcal{D}}_2^+ \vartheta^+ \right)_{i-h e_j} =
\begin{cases}
\frac{2}{h^2} \left( \frac{ \pi}{2} - \arctan\left( \frac{ {\mathcal{D}}_2^+ v_{(0,0)} }{h^3} \right) \right) \quad \, & \text{for } \, i = (0, 0) ,
\\
\frac{1}{h^2} \left( \arctan\left( \frac{ {\mathcal{D}}_2^+ v_{(0,0)} }{h^3} \right) -\frac{\pi}{2} \right) \quad \, & \text{for } \, i = (0, \pm h) ,
\\
0 \quad \, & \text{otherwise in } \, h\Z^2 .
\end{cases}
$$
By using \eqref{eq:ex4:bound per rho+} and
recalling that the relations in \eqref{eq:ex1:relations che servono anche in ex4} hold true for $\vartheta^+$,
%
%
we can now directly compute
\begin{equation}
\begin{split}\label{eq:ex4:step per k2}
\kappa_2^+ =\,& \sum_{{1\le j\le 2}\atop{i\in h\Z^2}}
\left( \rho_i^+ \right)^2
\Big(
|{\mathcal{D}}_j^+\vartheta^+_i|\,|{\mathcal{D}}_j^+ ({\mathcal{D}}_j^+ \vartheta^+)_{i-he_j}|+
|{\mathcal{D}}_j^-\vartheta^+_i|\,|{\mathcal{D}}_j^- ({\mathcal{D}}_j^- \vartheta^+)_{i+he_j}|
\Big)\\
\le \, &  12 \, S^2 \, \frac{ \left( \frac{\pi}{2} - \arctan\left( \frac{ {\mathcal{D}}_2^+ v_{(0,0)} }{h^3} \right) \right)^2  }{h^3} ,
\end{split}
\end{equation}
where $S$
is defined in \eqref{eq:ex4:bound per rho+}.
By using~\eqref{eq:arctanestimatePRE} with~$t= {\mathcal{D}}_2^+ v_{(0,0)} / h^3$,
\begin{equation}\label{eq:ex4:arctanestimate}
\left| \mathrm{sgn}\left(\frac{ {\mathcal{D}}_2^+ v_{(0,0)} }{h^3}\right) 
\frac{\pi}{2} - \arctan\left(\frac{ {\mathcal{D}}_2^+ v_{(0,0)} }{h^3}\right)  \right| \le \frac{h^3}{ {\mathcal{D}}_2^+ v_{(0,0)} }.
\end{equation}
{F}rom this and~\eqref{eq:ex4:step per k2},
we obtain that
\begin{equation}\label{eq:ex4:k2}
\kappa_2^+ \le 12 \left( \frac{ S }{ {\mathcal{D}}_2^+ v_{(0,0)} } \right)^2 \, h^3 ,
\end{equation}
that gives~\eqref{K2} and also keeps track of the order of $h$.

In order to verify \eqref{K33}, we notice that, being~$\vartheta_\infty^+= \pi/{2}$, we have that
\begin{equation*}
|\vartheta_i^+ - \vartheta_\infty^+|=
\begin{cases}
  \frac{ \pi}{2} - \arctan\left( \frac{ {\mathcal{D}}_2^+ v_{(0,0)} }{h^3} \right)  \quad \, & \text{for } \, i = ( 0 , 0 ) ,
\\
0  \quad \, & \text{otherwise in } \, h\Z^2 .
\end{cases}
\end{equation*}

Thus, the only nonzero term in the summations
defining $\kappa_3^+$, $\kappa_4^+$, $\kappa_5^+$, $\kappa_6^+$,
$\kappa_7^+$ in \eqref{K33} are those for~$i=(0,0)$.

We start by computing that
\begin{equation*}
\kappa_3^+ = 4 \left[  \left( {\mathcal{D}}_2^+ v_{(0,0)} \right)^2 + h^6  \right] \frac{\left( \frac{\pi}{2} - \arctan\left( \frac{  {\mathcal{D}}_2^+ v_{(0,0)}  }{h^3} \right)  \right)^4}{h^3} ,
\end{equation*}
which, in light of \eqref{eq:ex4:arctanestimate},
gives that
$$
\kappa_3^+ \le \frac{ 4 }{ \left( {\mathcal{D}}_2^+ v_{(0,0)} \right)^2 } \left[ 1 + \left( \frac{h^3}{ {\mathcal{D}}_2^+ v_{(0,0)} } \right)^2 \right] \, h^9
$$
and hence, by recalling \eqref{eq:ex4: h tra 0 e min tra 1 e D2}, that
\begin{equation}\label{eq:ex4:k3}
\kappa_3^+ \le \frac{ 8 }{ \left( {\mathcal{D}}_2^+ v_{(0,0)} \right)^2 } \, h^9 .
\end{equation}

In order to estimate $\kappa_4^+$, $\kappa_6^+$, $\kappa_7^+$ we just need to compute 
\begin{equation*}
\mathcal{L}_1^2 \vartheta_{(0,0)}^+ = \mathcal{L}_2^2 \vartheta_{(0,0)}^+ = \frac{6}{h^4} \left( \arctan\left( \frac{ {\mathcal{D}}_2^+ v_{(0,0)} }{h^3} \right) - \frac{ \pi}{2}  \right) ,
\end{equation*}
and to notice that, by \eqref{eq:ex4:bound per rho+}, we have that
\begin{equation}\label{eq:ex4:ROUGH BOUNDS rho}
|{\mathcal{D}}_j^{\pm}\rho_{(0,0)}^+ | \le \frac{2 \,  S}{h} ,
 \quad {\mbox{ for }} j=1,2 ,
\end{equation}
and
\begin{equation}\label{eq:ex4:ROUGH BOUNDS rho^2}
|{\mathcal{D}}_j^{\pm} \left( \rho_{(0,0)}^+  \right)^2 | 
\le \frac{2 \, S^2}{h} , \quad {\mbox{ for }} j=1,2 .
\end{equation}
We stress that more accurate computations
(similar to those performed in Example \ref{example 1})
could be performed in order to obtain the exact values of 
$|{\mathcal{D}}_j^{\pm}\rho_{(0,0)}^+ |$
and~$\Big|
{\mathcal{D}}_j^{\pm} \left( \rho_{(0,0)}^+  \right)^2 \Big|$.
However, the
bounds in \eqref{eq:ex4:ROUGH BOUNDS rho} and
\eqref{eq:ex4:ROUGH BOUNDS rho^2} are
sufficient in order to verify that $u$ satisfies \eqref{K33} and also to check the optimality of
Theorem~\ref{DGDG}.

By putting together that 
\begin{equation*}
%
%
\rho^+_{(0,0)} = \sqrt{ \left( {\mathcal{D}}_2^+ v_{(0,0)} \right)^2 +h^6 } = {\mathcal{D}}_2^+ v_{(0,0)} \, \sqrt{ 1 + \left( \frac{ h^3}{ {\mathcal{D}}_2^+ v_{(0,0)} } \right)^2 }
\end{equation*}
and \eqref{eq:ex4: h tra 0 e min tra 1 e D2}, we obtain that
\begin{equation}\label{eq:ex4:STIMA rho+ in (0,0)}
\rho^+_{(0,0)} \le \sqrt{2} \ {\mathcal{D}}_2^+ v_{(0,0)} .
\end{equation}
By using~\eqref{eq:ex4:ROUGH BOUNDS rho}
and~\eqref{eq:ex4:STIMA rho+ in (0,0)}, we
now compute that
\begin{equation*}
\kappa_4^+ \le 
\left( \sqrt{ 2 } \ {\mathcal{D}}_2^+ v_{(0,0)} \right)
\, \left( \frac{2 \, S}{h} \right) \,
4 \, \frac{ \left( \frac{\pi}{2} - \arctan\left( \frac{ {\mathcal{D}}_2^+ v_{(0,0)} }{h^3} \right)  \right)^3}{h^2} .
\end{equation*}
{F}rom this and \eqref{eq:ex4:arctanestimate},
we deduce that
%
%
%
\begin{equation}\label{eq:ex4:k4}
\kappa_4^+ \le 8\sqrt{2} \, \frac{S}{ \left( {\mathcal{D}}_2^+ v_{(0,0)} \right)^2 } \,
h^6 .
\end{equation}

By using \eqref{eq:ex4:STIMA rho+ in (0,0)}, we also find that
\begin{equation*}
\kappa_5^+ \le \kappa_0^+ \, \sqrt{ 2 } \ {\mathcal{D}}_2^+ v_{(0,0)} \,  \left( \frac{\pi}{2} - \arctan\left( \frac{1}{h^3} \right)  \right)  ,
\end{equation*}
where $\kappa_0^+$ is that obtained in \eqref{eq:ex4:k0+ definition}.
Thus, by \eqref{eq:ex4:arctanestimate}, we see that
\begin{equation}\label{eq:ex4:k5}
\kappa_5^+ \le \kappa_0^+ \sqrt{2} \ h^3 .
\end{equation}

By using the inequality $\left( \rho^+_{(0,0)} \right)^2
 \le 2 \left( {\mathcal{D}}_2^+ v_{(0,0)} \right)^2 $ ---
which follows by \eqref{eq:ex4:STIMA rho+ in (0,0)} --- and \eqref{eq:ex4:ROUGH BOUNDS rho^2}, we then compute
\begin{equation*}
\kappa_6^+ \le  \left[ \left( \frac{2 \, S^2 }{h} \right) \, 4 \, \frac{\left( \frac{\pi}{2} - \arctan\left( \frac{ {\mathcal{D}}_2^+ v_{(0,0)} }{h^3} \right)  \right)}{h} + 24 \left( {\mathcal{D}}_2^+ v_{(0,0)} \right)^2 \, \frac{\left( \frac{\pi}{2} - \arctan\left( \frac{ {\mathcal{D}}_2^+ v_{(0,0)} }{h^3} \right)  \right)}{h^3}  \right]  \left( \frac{\pi}{2} - \arctan\left( \frac{ {\mathcal{D}}_2^+ v_{(0,0)} }{h^3} \right)  \right),
\end{equation*}
and hence, by \eqref{eq:ex4:arctanestimate},
\begin{equation}\label{eq:ex4:k6}
\kappa_6^+ \le \left[ \frac{ 8 \, S^2 }{ \left( {\mathcal{D}}_2^+ v_{(0,0)} \right)^2 } \, h + 24 \right] \, h^3 .
\end{equation}

Finally, by using \eqref{eq:ex4:ROUGH BOUNDS rho} we find that
\begin{equation*}
\kappa_7^+ \le
\left( \frac{2 \, S }{ h } \right)^2 \left[ 4 \, \frac{ \left( \frac{\pi}{2} - \arctan\left( \frac{ {\mathcal{D}}_2^+ v_{(0,0)} }{h^3} \right)  \right)}{h} \right] \, \left( \frac{\pi}{2} - \arctan\left( \frac{ {\mathcal{D}}_2^+ v_{(0,0)} }{h^3} \right)  \right) ,
\end{equation*}
and hence, by \eqref{eq:ex4:arctanestimate}, we obtain that
\begin{equation}\label{eq:ex4:k7}
\kappa_7^+ \le \left[ \frac{16 \, S^2 }{ \left( {\mathcal{D}}_2^+ v_{(0,0)} \right)^2 } \right] \, h^3 .
\end{equation}

In light of \eqref{eq:ex4:h tra 0 e 1}, inequalities
\eqref{eq:ex4:k3}, \eqref{eq:ex4:k4}, \eqref{eq:ex4:k5},
\eqref{eq:ex4:k6} and~\eqref{eq:ex4:k7} give \eqref{K33}.

All in all, we have that~$u$ and $f$ satisfy all the assumptions of
Theorem \ref{DGDG} and hence \eqref{FORM} holds true.
Nevertheless~$u$ is not one-dimensional, and \eqref{eq:esempi div da zero} holds true.

We notice that the quantity in the left-hand side of~\eqref{eq:esempi div da zero} can be explicitly computed. Here, as usual, in order to check the optimality of Theorem \ref{DGDG}, we just notice that
\begin{eqnarray*}
 \sum\limits_{{1\le j\le 2}\atop{i\in h\Z^2}}
(\rho_i^+)^2 \;\Big(
|{\mathcal{D}}_j^+\vartheta^+_i|^2+|{\mathcal{D}}_j^-\vartheta^+_i|^2
\Big) 
& \ge &
 (\rho_{(-h,0)}^+)^2 \; \sum\limits_{j=1}^2 \Big(
|{\mathcal{D}}_{j}^+\vartheta^+_{(-h,0)}|^2+|{\mathcal{D}}_j^-\vartheta^+_{(-h,0)}|^2
\Big)
\\
& = &  | {\mathcal{D}}_2^+ v_{ (-h,0) } |^2 | {\mathcal{D}}_1^+\vartheta^+_{(-h,0)}|^2 
 =  | {\mathcal{D}}_2^+ v_{ (0,0) } |^2 \frac{ \left( \frac{\pi}{2} - \arctan\left( \frac{  {\mathcal{D}}_2^+ v_{ (0,0) }  }{h^3} \right) \right)^2  }{h^2}
.
\end{eqnarray*}
Here, in the last equality we used the explicit value of ${\mathcal{D}}_1^+\vartheta^+_{(-h,0)}$ computed before, and the equality ${\mathcal{D}}_2^+ v_{ (0,0) } = {\mathcal{D}}_2^+ v_{ (-h,0) } $ which holds true since $v$ is a function depending on the $i_2$-variable only.

%
%

By recalling~\eqref{eq:ex4: h tra 0 e min tra 1 e D2}, we can take~$t = {\mathcal{D}}_2^+ v_{ (0,0) }/h^3$ in \eqref{eq:ex1:inequality serve per ex4 arctan da sotto}, obtaining that
\begin{equation*}
\left| \mathrm{sgn}\left(\frac{ {\mathcal{D}}_2^+ v_{ (0,0) } }{h^3}\right) \frac{\pi}{2} - \arctan
\left(\frac{ {\mathcal{D}}_2^+ v_{ (0,0) } }{h^3}\right) \right| 
\ge \frac{4 }{\pi} \, \frac{ \left| h \right|^3 }{ {\mathcal{D}}_2^+ v_{ (0,0) } }.
\end{equation*}
{F}rom this we thus get that
\begin{equation}\label{eq:ex4:stima da sotto}
\sum\limits_{{1\le j\le 2}\atop{i\in h\Z^2}}
(\rho_i^+)^2 \;\Big(
|{\mathcal{D}}_j^+\vartheta^+_i|^2+|{\mathcal{D}}_j^-\vartheta^+_i|^2
\Big) 
 \ge
 \frac{4^2}{\pi^2} \, h^4.
\end{equation}

On the other hand, \eqref{eq:ex4:k2}, \eqref{eq:ex4:k3},
\eqref{eq:ex4:k4}, \eqref{eq:ex4:k5}, \eqref{eq:ex4:k6},
\eqref{eq:ex4:k7} and~\eqref{eq:ex4:h tra 0 e 1}
give that the right-hand side of \eqref{FORM} satisfies
\begin{equation}\label{eq:ex4:stimadasopra}
C \, h \le c_1 h^4 ,
\end{equation}
where $C$ is the quantity defined in \eqref{eq:constant C}, and 
$$
c_1:= 4 \left\lbrace \frac{1}{ \left( {\mathcal{D}}_2^+ v_{(0,0)} \right)^2 } \left[ 36 \, S^2 + 16 \, e^{2 \pi} \left( 1 + \sqrt{2} \,  S  \right) \right] + 2 \sqrt{2} \, \kappa_0^+ +24  \right\rbrace ,
$$
where $\kappa_0^+$ and $S$ are those defined in \eqref{eq:ex4:k0+ definition} and \eqref{eq:ex4:bound per rho+}.
 
Thus, by putting together \eqref{FORM}, \eqref{eq:ex4:stima da sotto}
and \eqref{eq:ex4:stimadasopra}, it is clear that, in the
formal limit as~$
h \searrow 0$, the left-hand side and the right-hand side of \eqref{FORM}
are both of the order of $h^4$. Thus, also this general example confirms the optimality of \eqref{FORM}.

We stress that $c_1$ is a positive constant only depending on (a lower bound on) ${\mathcal{D}}_2^+ v_{(0,0)}$ and (upper bounds on) $\Vert \tilde{v}' \Vert_{L^\infty (\R)}$, $\Vert \tilde{v}^{(iv)} \Vert_{L^\infty (\R)}$, $\Vert g' \Vert_{L^\infty ( 
\R )}$, and $\Vert g'' \Vert_{L^\infty ( \R)}$. 
We recall that, in concrete examples --- such as, e.g.,
in the case of the sine-Gordon equation \eqref{eq:ex4:sineGordoneq} ---, these bounds can be explicitly obtained and $c_1$ is just a universal constant.
}
\end{example}

\begin{appendix}

\section{The identity in~\eqref{k8i-know-CONTIN}
as a formal limit of the one in~\eqref{k8i-know}}\label{APPEDK894}

In this appendix, we discuss, merely at a formal
level, how the continuous identity in~\eqref{k8i-know-CONTIN}
may be understood as a suitable limit of the discrete identity
in~\eqref{k8i-know} (and we believe that this observation
is interesting, since it relates the identity in~\eqref{k8i-know-CONTIN}, which
is classical and well understood,
with the one in~\eqref{k8i-know}, which is, as far as we can tell,
completely new in the literature).

Establishing a rigorous framework to relate~\eqref{k8i-know}
and~\eqref{k8i-know-CONTIN} goes beyond the goals of this paper
and would rather fit into the more comprehensive and ambitious goal
of scrupulously connect discrete and continuous models,
hence our arguments will rely on formal, yet solid, approximations.
To deduce~\eqref{k8i-know-CONTIN} from~\eqref{k8i-know}
we fix~$(x_1,x_2)\in\R^2$. Without loss of generality, we suppose that~$x_1>0$.
Given~$h>0$, we take~$k\in\N$ such as~$hk$ is as close as possible to~$x_1$,
for instance by taking~$k$ such that~$hk\le x_1 < h(k+1)$.
Similarly, we take~$m\in\Z$ such that~$hm\le x_2<h(m+1)$.
As a matter of fact, to make the computation as transparent as possible,
we simply suppose that~$x_1=1$ and $x_2=0$, and also that~$\frac1h\in\N$,
in which case we have that~$k=\frac1h$ and~$m=0$.
In this way, we can write~\eqref{k8i-know} in the simpler form
\begin{equation}\label{k8i-know-SIM}
u_{(1,0)}=\sum_{j=0}^{1/h} {\binom {1/h}{j}}
c^j\,(1-c)^{\frac1h-j}\,\widetilde{u}_{hj},
\end{equation}
where~$c$ is short for~$c^+$. Similarly, we can state~\eqref{k8i-know-CONTIN}
in its simpler version given by
\begin{equation}\label{k8i-know-CONTIN-SIM}
u(1,0)=\widetilde{u}(c).
\end{equation}
Since the role of the point~$(x_1,x_2)=(1,0)$ is somehow
arbitrary, we focus on the relation between~\eqref{k8i-know-SIM}
and~\eqref{k8i-know-CONTIN-SIM}. 
Furthermore,
we suppose for simplicity that~$c\in(0,1)$
and use the Binomial Theorem to observe that
$$ 1=( c+(1-c))^k=\sum_{j=0}^k{\binom {k}{j}}
c^j\,(1-c)^{k-j}.$$
On this account, we can write~\eqref{k8i-know-SIM}
in the form
\begin{equation}\label{k8i-know-SIM-2}
\sum_{j=0}^{1/h} {\binom {1/h}{j}}
c^j\,(1-c)^{\frac1h-j}\,\big(\widetilde{u}_{hj}-\widetilde{u}(c)\big)=u_{(1,0)}-
\widetilde{u}(c).
\end{equation}
With a slight abuse of notation, we also identify the
functions in the discrete setting with the corresponding ones in the continuum
without further notice.
With this, up to replacing~$u_{(i_1,i_2)}$ and~$\widetilde{u}_{i}$
with~$v (i_1,i_2):=
u_{(i_1,i_2)}-\widetilde{u}(c)$
and~$\widetilde{v} (i) := \widetilde{u}_{i}-
\widetilde{u}(c)$
respectively, we can replace~\eqref{k8i-know-SIM-2}
by
\begin{equation}\label{k8i-know-SIM-3}
\sum_{j=0}^{1/h} {\binom {1/h}{j}}
c^j\,(1-c)^{\frac1h-j}\,\widetilde{v}(hj) = v(1,0) ,
\end{equation}
with the additional assumption that
\begin{equation}\label{KS:odw398tuhrfnstekll05}
\widetilde{v}(c)=0.
\end{equation}
The same setting~$v(x_1,x_2):=u(x_1,x_2)-\widetilde{u}(c)$
reduces~\eqref{k8i-know-CONTIN-SIM} to
\begin{equation}\label{k8i-know-CONTIN-SIM-3}
v(1,0)=0,
\end{equation}
therefore we focus on discussing how~\eqref{k8i-know-SIM-3}
formally implies~\eqref{k8i-know-CONTIN-SIM-3} under condition~\eqref{KS:odw398tuhrfnstekll05}.

To this end, it is convenient to reduce
to ``large indexes''~$j$ in~\eqref{k8i-know-SIM-3}, in view of the following observation.
Let~$M>0$. 
Since
$$
e^j = \sum_{i=0}^{\infty} \frac{j^i}{ i!} \ge \frac{j^j}{ j!} ,
$$
we have that
\begin{equation*}
{k \choose j}\leq \frac{k^j}{ j! } \le
\left({\frac {ek}{j}}\right)^{j}. 
\end{equation*}
As a result,
if~$1\le j\le M$ we have that
$$ {k \choose j}\leq 
\left({\frac {ek}{j}}\right)^{j}\le (ek)^M.$$
Consequently, assuming~$\widetilde{v}$ bounded,
\begin{equation}\label{CBciel}\begin{split}
\sum_{j=0}^{M} {\binom {1/h}{j}}
c^j\,(1-c)^{\frac1h-j}\,|\widetilde{v}(hj)|\le\,& (1-c)^{\frac1h}\,\widetilde v(0)+
\frac{e^M}{h^M} (1-c)^{\frac1h}
\sum_{j=1}^{M} \left(\frac{c}{1-c}\right)^j\,|\widetilde{v}(hj)|\\
\le\,& \frac{C\,e^M}{h^M} (1-c)^{\frac1h}
\end{split}
\end{equation}
for some~$C>0$ independent of~$h$, and the latter quantity
in~\eqref{CBciel} is infinitesimal as~$h\searrow0$.
For this reason, we can focus in~\eqref{k8i-know-SIM-3}
on the ``large indexes''~$j\ge M$.
Similarly, given the symmetry properties of the binomial
coefficients, we can focus on the case in which~$\frac1h-j$ is large.

As a result, it is appropriate to use
Stirling's 
Formula 
$$
{1/h \choose j}\sim {\sqrt {1/h \over 2\pi j\,\left(\frac1h-j\right)}}\; 
{(1/h)^{\frac1h} \over j^{j}\,\left(\frac1h-j\right)^{\frac1h-j}}=
{\sqrt {1\over 2\pi j(1-hj)}}\; 
{1 \over(h j)^{j}\,(1-hj)^{\frac1h-j}}
$$
and bound the absolute value of the left hand side of~\eqref{k8i-know-SIM-3}
by
\begin{equation}\label{ruot83irj3grg8berfvjndngir}
\sum_{j=M}^{\frac1h-M} 
{\sqrt {1\over j(1-hj)}}\;
\left(\frac{c}{hj}\right)^j\,\left(\frac{1-c}{1-hj}\right)^{\frac1h-j}\,|\widetilde{v}(hj)|,
\end{equation}
up to multiplicative constants that we neglect for the sake of simplicity.

Now, we will formally replace some quantities in~\eqref{ruot83irj3grg8berfvjndngir}
with their asymptotic counterparts as~$h\searrow0$.
For instance, observing that
$$ \frac1h\int_{hj}^{h(j+1)}|\widetilde{v}(t)|\,dt-|\widetilde{v}(hj)|\longrightarrow0
\qquad{\mbox{ as }}h\searrow0,$$
we formally replace~\eqref{ruot83irj3grg8berfvjndngir}
by
\begin{equation}\label{ruot83irj3grg8berfvjndngir2}
\frac1h\sum_{j=0}^{\frac1h-1} \int_{hj}^{h(j+1)}
{\sqrt {1\over j(1-hj)}}\;
\left(\frac{c}{hj}\right)^j\,\left(\frac{1-c}{1-hj}\right)^{\frac1h-j}\,|\widetilde{v}(t)|\,dt.
\end{equation}
In addition,
if~$t\in[hj,h(j+1))$ we have that
$$ t-hj\in [0,h]\longrightarrow0
\qquad{\mbox{ as }}h\searrow0,$$
whence we formally replace~\eqref{ruot83irj3grg8berfvjndngir2}
with the expression
\begin{equation}\label{ruot83irj3grg8berfvjndngir3}\begin{split}&
\frac1{\sqrt{h}}\sum_{j=0}^{\frac1h-1} \int_{hj}^{h(j+1)}
\frac1{{\sqrt{t(1-t)}}}\;
\left(\frac{c}{t}\right)^\frac{t}{h}\,\left(\frac{1-c}{1-t}\right)^{\frac{1-t}h}\,|\widetilde{v}(t)|\,dt
\\=\;&
\frac1{\sqrt{h}} \int_{0}^{1}
\frac1{{\sqrt{t(1-t)}}}\;
\left(\frac{c}{t}\right)^\frac{t}{h}\,\left(\frac{1-c}{1-t}\right)^{\frac{1-t}h}\,|\widetilde{v}(t)|\,dt
\\=\;&
\frac1{\sqrt{h}} \int_{0}^{1}
\frac{\big(\phi(t)\big)^{\frac1h}}{{\sqrt{t(1-t)}}}\;
|\widetilde{v}(t)|\,dt
,\end{split}
\end{equation}
where
$$\phi(t):=
\left(\frac{c}{t}\right)^t\,\left(\frac{1-c}{1-t}\right)^{{1-t}}.$$
We remark that~$\phi(c)=1$,
$$\phi'(t)=\frac{1-c}{1-t}
\,\left(\frac{1-t}{1-c}\right)^t \left(\frac{c}t\right)^t
\log\frac{c(1-t)}{t(1-c)}
$$
and therefore the only critical point of~$\phi$ is~$t=c$.

We also notice that
$$\phi''(t)=
\left(\frac{1-c}{1-t}\right)^{1 - t} \left(\frac{c}{t}\right)^t \left(
\log^2 \frac{1-c}{1-t}+ \log^2\frac{c}{t} - 2 \log\frac{c}t \log\frac{1-c}{1-t} -\frac1{1-t} - \frac1t\right)
$$
and as a consequence
$$\phi''(c)=-\frac1c-\frac1{1-c}=-\frac{1}{c(1-c)}<0.$$
This gives that~$t=c$ is a maximum for~$\phi$ and there exists
small but strictly positive~$\delta_0$ and~$\delta_1$
such that
\begin{eqnarray*}&&
\phi(t)\le 1-\frac{(t-c)^2}{2c(1-c)}\qquad{\mbox{ for all }}t\in
(c-\delta_0,c+\delta_0)\subset(0,1),\\
&&\phi(t)\le 1-\delta_1
\qquad{\mbox{ for all }}t\in(0,1)\setminus (c-\delta_0,c+\delta_0).
\end{eqnarray*}
Hence, noticing that
\begin{equation}\label{OJSHNDK i9023rutgope}
\frac1{\sqrt{h}} \int_{(0,1)\setminus (c-\delta_0,c+\delta_0)}
\frac{\big(\phi(t)\big)^{\frac1h}}{{\sqrt{t(1-t)}}}\;
|\widetilde{v}(t)|\,dt\le
\frac{C\,(1-\delta_1)^{\frac1h}}{\sqrt{h}} \int_{(0,1)}
\frac{dt}{{\sqrt{t(1-t)}}}\le\frac{C\,(1-\delta_1)^{\frac1h}}{\sqrt{h}},
\end{equation}
up to renaming the quantity~$C>0$ independently on~$h$,
and remarking that the latter term in~\eqref{OJSHNDK i9023rutgope}
is infinitesimal as~$h\searrow0$,
we can bound~\eqref{ruot83irj3grg8berfvjndngir3} by
\begin{equation}\label{JOSNK-LS8ND}
\frac1{\sqrt{h}} \int_{c-\delta_0}^{c+\delta_0}
\frac{\big(\phi(t)\big)^{\frac1h}}{{\sqrt{t(1-t)}}}\;
|\widetilde{v}(t)|\,dt,
\end{equation}
up to adding an infinitesimal term.

In turn, we use the transformation~$\tau:=\frac{t-c}{\sqrt{h}}$
to see that the quantity in~\eqref{JOSNK-LS8ND}
is controlled by
\begin{eqnarray*}&&
\frac{C}{\sqrt{h}} \int_{c-\delta_0}^{c+\delta_0}\left(
1-\frac{(t-c)^2}{2c(1-c)}\right)^{\frac1h}
\frac{|\widetilde{v}(t)|\,dt}{{\sqrt{t(1-t)}}}
\le
\frac{C}{\sqrt{h}} \int_{c-\delta_0}^{c+\delta_0}\left(
1-\frac{(t-c)^2}{2c(1-c)}\right)^{\frac1h}\,|\widetilde{v}(t)|
\,dt
\\&&\qquad=\frac{C}{\sqrt{h}} \int_{c-\delta_0}^{c+\delta_0}
\exp
\left(\frac1h\,\log\left(
1-\frac{(t-c)^2}{2c(1-c)}\right)\right)
\,|\widetilde{v}(t)|\,dt\le
\frac{C}{\sqrt{h}} \int_{c-\delta_0}^{c+\delta_0}
\exp
\left(-\frac{(t-c)^2}{4c(1-c)h}\right)
\,|\widetilde{v}(t)|
\,dt\\&&\qquad=
C\int_{-\delta_0/\sqrt{h}}^{\delta_0/\sqrt{h}}
\exp
\left(-\frac{\tau^2}{4c(1-c)}\right)\,
|\widetilde{v}(c+\sqrt{h}\tau)|
\,d\tau
\end{eqnarray*}
up to renaming~$C$ at each step of the calculation.
Thus, as~$h\searrow0$, we formally obtain the bound
$$ C\int_{-\infty}^{+\infty}
\exp
\left(-\frac{\tau^2}{4c(1-c)}\right)\,
|\widetilde{v}(c)|
\,d\tau,$$
which is equal to zero, thanks to~\eqref{KS:odw398tuhrfnstekll05}.

These considerations imply that
the formal limit as~$h\searrow0$ of~\eqref{k8i-know-SIM-3}
is~$0=v{(1,0)}$,
which is~\eqref{k8i-know-CONTIN-SIM-3}.

\end{appendix}

\section*{Acknowledgments}

The authors are members of INdAM and AustMS and
are supported by the Australian Research Council
Discovery Project DP170104880 NEW ``Nonlocal Equations at Work''.
The first author is supported by
the Australian Research Council DECRA DE180100957
``PDEs, free boundaries and applications''. 
The second and third authors are supported by
the Australian Laureate Fellowship
FL190100081
``Minimal surfaces, free boundaries and partial differential equations''.

\begin{bibdiv}\begin{biblist}

\bib{MR1775735}{article}{
   author={Ambrosio, Luigi},
   author={Cabr\'{e}, Xavier},
   title={Entire solutions of semilinear elliptic equations in $\bold R^3$
   and a conjecture of De Giorgi},
   journal={J. Amer. Math. Soc.},
   volume={13},
   date={2000},
   number={4},
   pages={725--739},
   issn={0894-0347},
   review={\MR{1775735}},
   doi={10.1090/S0894-0347-00-00345-3},
}

\bib{MR719055}{article}{
   author={Aubry, Serge},
   title={The twist map, the extended Frenkel-Kontorova model and the
   devil's staircase},
   note={Order in chaos (Los Alamos, N.M., 1982)},
   journal={Phys. D},
   volume={7},
   date={1983},
   number={1-3},
   pages={240--258},
   issn={0167-2789},
   review={\MR{719055}},
   doi={10.1016/0167-2789(83)90129-X},
}

\bib{MR719634}{article}{
   author={Aubry, S.},
   author={Le Daeron, P. Y.},
   title={The discrete Frenkel-Kontorova model and its extensions. I. Exact
   results for the ground-states},
   journal={Phys. D},
   volume={8},
   date={1983},
   number={3},
   pages={381--422},
   issn={0167-2789},
   review={\MR{719634}},
   doi={10.1016/0167-2789(83)90233-6},
}

\bib{MR1655510}{article}{
   author={Berestycki, Henri},
   author={Caffarelli, Luis},
   author={Nirenberg, Louis},
   title={Further qualitative properties for elliptic equations in unbounded
   domains},
   note={Dedicated to Ennio De Giorgi},
   journal={Ann. Scuola Norm. Sup. Pisa Cl. Sci. (4)},
   volume={25},
   date={1997},
   number={1-2},
   pages={69--94 (1998)},
   issn={0391-173X},
   review={\MR{1655510}},
}

\bib{MR3038682}{article}{
   author={Blass, Timothy},
   author={de la Llave, Rafael},
   title={The analyticity breakdown for Frenkel-Kontorova models in
   quasi-periodic media: numerical explorations},
   journal={J. Stat. Phys.},
   volume={150},
   date={2013},
   number={6},
   pages={1183--1200},
   issn={0022-4715},
   review={\MR{3038682}},
   doi={10.1007/s10955-013-0718-8},
}

\bib{MR2035039}{book}{
   author={Braun, O. M.},
   author={Kivshar, Y. S.},
   title={The Frenkel-Kontorova model},
   series={Texts and Monographs in Physics},
   note={Concepts, methods, and applications},
   publisher={Springer-Verlag, Berlin},
   date={2004},
   pages={xviii+472},
   isbn={3-540-40771-5},
   review={\MR{2035039}},
   doi={10.1007/978-3-662-10331-9},
}

\bib{MR4102235}{article}{
   author={Buchheit, Andreas A.},
   author={Rjasanow, Sergej},
   title={Ground state of the Frenkel-Kontorova model with a globally
   deformable substrate potential},
   journal={Phys. D},
   volume={406},
   date={2020},
   pages={132298},
   issn={0167-2789},
   review={\MR{4102235}},
   doi={10.1016/j.physd.2019.132298},
}

\bib{MR3469920}{book}{
   author={Bucur, Claudia},
   author={Valdinoci, Enrico},
   title={Nonlocal diffusion and applications},
   series={Lecture Notes of the Unione Matematica Italiana},
   volume={20},
   publisher={Springer, [Cham]; Unione Matematica Italiana, Bologna},
   date={2016},
   pages={xii+155},
   isbn={978-3-319-28738-6},
   isbn={978-3-319-28739-3},
   review={\MR{3469920}},
   doi={10.1007/978-3-319-28739-3},
}

\bib{MR3912645}{article}{
   author={Buffoni, Boris},
   author={Schwetlick, Hartmut},
   author={Zimmer, Johannes},
   title={Travelling heteroclinic waves in a Frenkel-Kontorova chain with
   anharmonic on-site potential},
   language={English, with English and French summaries},
   journal={J. Math. Pures Appl. (9)},
   volume={123},
   date={2019},
   pages={1--40},
   issn={0021-7824},
   review={\MR{3912645}},
   doi={10.1016/j.matpur.2019.01.002},
}

\bib{MR1620543}{article}{
   author={Candel, A.},
   author={de la Llave, R.},
   title={On the Aubry-Mather theory in statistical mechanics},
   journal={Comm. Math. Phys.},
   volume={192},
   date={1998},
   number={3},
   pages={649--669},
   issn={0010-3616},
   review={\MR{1620543}},
   doi={10.1007/s002200050313},
}

\bib{MR533166}{article}{
   author={De Giorgi, Ennio},
   title={Convergence problems for functionals and operators},
   conference={
      title={Proceedings of the International Meeting on Recent Methods in
      Nonlinear Analysis},
      address={Rome},
      date={1978},
   },
   book={
      publisher={Pitagora, Bologna},
   },
   date={1979},
   pages={131--188},
   review={\MR{533166}},
}

\bib{MR2473304}{article}{
   author={del Pino, Manuel},
   author={Kowalczyk, Micha\l },
   author={Wei, Juncheng},
   title={A counterexample to a conjecture by De Giorgi in large dimensions},
   language={English, with English and French summaries},
   journal={C. R. Math. Acad. Sci. Paris},
   volume={346},
   date={2008},
   number={23-24},
   pages={1261--1266},
   issn={1631-073X},
   review={\MR{2473304}},
   doi={10.1016/j.crma.2008.10.010},
}
	
\bib{MR2014827}{article}{
   author={Farina, Alberto},
   title={One-dimensional symmetry for solutions of quasilinear equations in
   $\Bbb R^2$},
   journal={Boll. Unione Mat. Ital. Sez. B Artic. Ric. Mat. (8)},
   volume={6},
   date={2003},
   number={3},
   pages={685--692},
   issn={0392-4041},
   review={\MR{2014827}},
}

\bib{MR2483642}{article}{
   author={Farina, Alberto},
   author={Sciunzi, Berardino},
   author={Valdinoci, Enrico},
   title={Bernstein and De Giorgi type problems: new results via a geometric
   approach},
   journal={Ann. Sc. Norm. Super. Pisa Cl. Sci. (5)},
   volume={7},
   date={2008},
   number={4},
   pages={741--791},
   issn={0391-173X},
   review={\MR{2483642}},
}

\bib{MR2528756}{article}{
   author={Farina, Alberto},
   author={Valdinoci, Enrico},
   title={The state of the art for a conjecture of De Giorgi and related
   problems},
   conference={
      title={Recent progress on reaction-diffusion systems and viscosity
      solutions},
   },
   book={
      publisher={World Sci. Publ., Hackensack, NJ},
   },
   date={2009},
   pages={74--96},
   review={\MR{2528756}},
   doi={10.1142/9789812834744\_0004},
}

\bib{MR2728579}{article}{
   author={Farina, Alberto},
   author={Valdinoci, Enrico},
   title={1D symmetry for solutions of semilinear and quasilinear elliptic
   equations},
   journal={Trans. Amer. Math. Soc.},
   volume={363},
   date={2011},
   number={2},
   pages={579--609},
   issn={0002-9947},
   review={\MR{2728579}},
   doi={10.1090/S0002-9947-2010-05021-4},
}

\bib{MR3488250}{article}{
   author={Farina, Alberto},
   author={Valdinoci, Enrico},
   title={1D symmetry for semilinear PDEs from the limit interface of the
   solution},
   journal={Comm. Partial Differential Equations},
   volume={41},
   date={2016},
   number={4},
   pages={665--682},
   issn={0360-5302},
   review={\MR{3488250}},
   doi={10.1080/03605302.2015.1135165},
}

\bib{MR2852206}{article}{
   author={Fino, A. Z.},
   author={Ibrahim, H.},
   author={Monneau, R.},
   title={The Peierls-Nabarro model as a limit of a Frenkel-Kontorova model},
   journal={J. Differential Equations},
   volume={252},
   date={2012},
   number={1},
   pages={258--293},
   issn={0022-0396},
   review={\MR{2852206}},
   doi={10.1016/j.jde.2011.08.007},
}

\bib{MR0001169}{article}{
   author={Frenkel, J.},
   author={Kontorova, T.},
   title={On the theory of plastic deformation and twinning},
   journal={Acad. Sci. U.S.S.R. J. Phys.},
   volume={1},
   date={1939},
   pages={137--149},
   review={\MR{0001169}},
}

\bib{MR4028786}{article}{
   author={Friesecke, Gero},
   title={A simple counterexample to the Monge ansatz in multimarginal
   optimal transport, convex geometry of the set of Kantorovich plans, and
   the Frenkel-Kontorova model},
   journal={SIAM J. Math. Anal.},
   volume={51},
   date={2019},
   number={6},
   pages={4332--4355},
   issn={0036-1410},
   review={\MR{4028786}},
   doi={10.1137/18M1207326},
}

\bib{MR1637919}{article}{
   author={Ghoussoub, N.},
   author={Gui, C.},
   title={On a conjecture of De Giorgi and some related problems},
   journal={Math. Ann.},
   volume={311},
   date={1998},
   number={3},
   pages={481--491},
   issn={0025-5831},
   review={\MR{1637919}},
   doi={10.1007/s002080050196},
}

\bib{MR2356117}{article}{
   author={de la Llave, Rafael},
   author={Valdinoci, Enrico},
   title={Ground states and critical points for generalized
   Frenkel-Kontorova models in $\Bbb Z^d$},
   journal={Nonlinearity},
   volume={20},
   date={2007},
   number={10},
   pages={2409--2424},
   issn={0951-7715},
   review={\MR{2356117}},
   doi={10.1088/0951-7715/20/10/008},
}

\bib{MR766107}{article}{
   author={Mather, John N.},
   title={Nonexistence of invariant circles},
   journal={Ergodic Theory Dynam. Systems},
   volume={4},
   date={1984},
   number={2},
   pages={301--309},
   issn={0143-3857},
   review={\MR{766107}},
   doi={10.1017/S0143385700002455},
}

\bib{MR3725364}{article}{
   author={Norell, Jesper},
   author={Fasolino, Annalisa},
   author={de Wijn, Astrid S.},
   title={Emergent friction in two-dimensional Frenkel-Kontorova models},
   journal={Phys. Rev. E},
   volume={94},
   date={2016},
   number={2},
   pages={023001, 13},
   issn={2470-0045},
   review={\MR{3725364}},
   doi={10.1103/physreve.94.023001},
}

\bib{MR2480601}{article}{
   author={Savin, Ovidiu},
   title={Regularity of flat level sets in phase transitions},
   journal={Ann. of Math. (2)},
   volume={169},
   date={2009},
   number={1},
   pages={41--78},
   issn={0003-486X},
   review={\MR{2480601}},
   doi={10.4007/annals.2009.169.41},
}

\bib{MR2757359}{article}{
   author={Savin, Ovidiu},
   title={Phase transitions, minimal surfaces and a conjecture of De Giorgi},
   conference={
      title={Current developments in mathematics, 2009},
   },
   book={
      publisher={Int. Press, Somerville, MA},
   },
   date={2010},
   pages={59--113},
   review={\MR{2757359}},
}

\bib{MR1650327}{article}{
   author={Sternberg, Peter},
   author={Zumbrun, Kevin},
   title={A Poincar\'{e} inequality with applications to volume-constrained
   area-minimizing surfaces},
   journal={J. Reine Angew. Math.},
   volume={503},
   date={1998},
   pages={63--85},
   issn={0075-4102},
   review={\MR{1650327}},
}

\bib{MR3663614}{article}{
   author={Su, Xifeng},
   author={de la Llave, Rafael},
   title={A continuous family of equilibria in ferromagnetic media are
   ground states},
   journal={Comm. Math. Phys.},
   volume={354},
   date={2017},
   number={2},
   pages={459--475},
   issn={0010-3616},
   review={\MR{3663614}},
   doi={10.1007/s00220-017-2913-y},
}

\bib{MR4015338}{article}{
   author={Trevi\~{n}o, Rodrigo},
   title={Equilibrium configurations for generalized Frenkel-Kontorova
   models on quasicrystals},
   journal={Comm. Math. Phys.},
   volume={371},
   date={2019},
   number={1},
   pages={1--17},
   issn={0010-3616},
   review={\MR{4015338}},
   doi={10.1007/s00220-019-03557-7},
}
	
\end{biblist}\end{bibdiv}

\end{document}